\makeatletter \AtBeginDocument{\let\mathaccentV\AMS@mathaccentV} \makeatother
\newtheorem{theorem}{Theorem}
\newtheorem{lemma}{Lemma}
\newtheorem{proposition}[lemma]{Proposition}
\newtheorem{definition}[lemma]{Definition}
\numberwithin{lemma}{section}
\newenvironment{manualtheorem}[1]{%
  \manualtheoreminner
}{\endmanualtheoreminner}
\numberwithin{equation}{section}
\newcommand{\bb}{\mathbb}
\newcommand{\abs}[1]{\left\vert { #1 } \right\vert}
\newcommand{\norm}[1]{\| #1 \|}
\begin{document}
\title{The Modified Energy Method for Quasilinear Wave Equations of Kirchhoff Type}

\author{ Ryan Martinez}
\address{Department of Mathematics, University of California at Berkeley}
\email{ryan\_martinez@berkeley.edu}

\begin{abstract}
    In this paper, we use the \emph{modified energy method} of Hunter, Ifrim, Tataru, and Wong
    to prove an improved quintic energy estimate for initial data small in $\dot H^1_x \times L^2_x$ for a wide class of 
    quasilinear wave equations of Kirchhoff type. This allows us to make the first steps towards small data 
    $H^{5/4}_x \times H^{1/4}_x$ local well-posedness. In particular, we prove an enhanced lifespan for corresponding solutions 
    depending only on the $\dot H^{5/4}_x \times \dot H^{1/4}_x$ norm of the initial data as well as the existence of weak 
    solutions for $H^{5/4}_x \times H^{1/4}_x$ initial data, again small in $\dot H^1_x \times L^2_x$. 
    In contrast to previous modified energy 
    results, the nonlinearity in these models depends on an $\dot H^1_x$ norm of the solution. This means 
    a modified energy cannot be deduced algebraically by analyzing resonant interactions between 
    wave packets since all spatial dependence is integrated out in the 
    nonlinearity. Instead, the modified energy is determined as a Taylor series of incremental 
    leading order terms.
\end{abstract}

\subjclass{Primary: 35L72 
Secondary: 37K45
}
\keywords{quasilinear wave equations, modified energy, Kirchhoff wave, enhanced lifespan, low regularity solutions}

\maketitle

\setcounter{tocdepth}{1}
\tableofcontents

\section{Introduction}
\label{sec:intro}
In this paper we study the quasilinear wave equation of Kirchhoff type
\begin{equation}
\label{eq:kirchhoff} \begin{cases}u'' = (1 + N(\norm{\nabla u}_{L_x^2}^2)) \Delta u \qquad \text{in } \bb R_t \times \bb R_x^n
\\u(0) = u_0 \in H_x^{s_0+1}(\bb R^n), u'(0) = u_1 \in H_x^{s_0}(\bb R^n) \end{cases}
\end{equation}
where $N: \bb R^+ \to \bb R$ is a $C^2_{loc}$ function with $N(0) = 0$. We work with 
initial data $(u_0, u_1) \in H^{s_0+1} \times H^{s_0}$ for $s_0 \geq 0$ and look for 
solutions in $C^0_t H_x^{s_0+1} \cap C_0^1 H_x^{s_0}$ which is typical for 
wave type equations. 

Note also that the choice to work in $\bb R^n$ is not essential. In fact, by the spectral theorem, the proofs follow
as written below for any Hilbert space $H$ and positive self-adjoint operator $-\Delta$ on $H$ with the nonlinearity replaced 
by 
$$N(\langle u, -\Delta u\rangle).$$

It is important that near small solutions \eqref{eq:kirchhoff} is uniformly of wave type. We make this restriction by factoring out the constant from $N$ and requiring that $N(0) = 0$.
We can see that our nonlinearity 
$N$ depends only on the $\dot H_x^1$ size of the data, 
and thus only depends on $t$ overall, since 
the norm is integrated over space. We will sometimes abuse notation and write $N(t)$ or 
simply $N$ for $N(\norm{\nabla u(t)}_{L^2_x}^2)$ when the context is clear.

One might hope for global well-posedness of \eqref{eq:kirchhoff} in the Sobolev space 
$\dot H_x^1 \times L_x^2$, which corresponds to $s_0 = 0$,
since this is the least regular space which controls the nonlinearity. 
However, as elaborated on below, local well-posedness in Sobolev spaces has not been improved
to below $H_x^{3/2} \times H_x^{1/2}$ ($s_0 = 1/2$), since its proof in the 90s.
On the other hand the global existence theory is more delicate with prior results requiring 
adding dampening to \eqref{eq:kirchhoff} or working in analytic spaces. 

The aim of this work is to take the first step towards improving the best known regularity for local well-posedness by proving energy estimates at the regularity of $\dot H_x^{5/4} \times \dot H_x^{1/4}$. As a consequence of these estimates, we are able to make improvements to the 
local and global theory. For the local theory, we show that for any $(u_0, u_1) \in H_x^{5/4} 
\times H_x^{1/4}$ which is small in $\dot H_x^1 \times L_x^2$, the Kirchhoff wave model has a weak solution 
in $C_t^0 (H_x^{5/4}) \cap W_t^{1, \infty} (H_x^{1/4})$ on some time interval. For the global existence theory, 
we show that
the time of existence for initial data $(u_0, u_1)$ small in $\dot H_x^1 \times L_x^2$ 
(including the weak solutions above)
depends only on the lower regularity Sobolev norm 
$$\epsilon = \norm{(u_0, u_1)}_{\dot H_x^{5/4} \times 
\dot H_x^{1/4}}$$
and that this time of existence is quintic, that is proportional to $\epsilon^{-4}$. 
Previously, this quintic dependence was only known on the torus and depending on the higher regularity 
inhomogeneous norm
$$\norm{(u_0, u_1)}_{H_x^{3/2} \times 
 H_x^{1/2}}.$$
Note that in the literature $\epsilon^{-4}$ dependence on norm size is called \emph{quintic} 
lifespan because it is the lifespan associated to an ODE with a quintic nonlinearity, namely 
$$\partial_t u = u^5.$$

\subsection{Related Work}

This model was proposed by Kirchhoff \cite{kirchhoff1897vorlesungen} to model 
waves where wave speed depends on how the wave deforms the medium, for instance in 
an elastic band. In particular, for the elastic band, the wave speed should depend 
only on the overall deformation of the material, which for waves is the potential energy
given by the square of the $\dot H_x^1$ norm of $u$.

The local well-posedness theory for the problem is explained in great detail in \cite{arosio1991mildly} and \cite{arosio1996well}
as well as the sources within. The main idea of the local well-posedness in \cite{arosio1991mildly}
and \cite{arosio1996well}
are very general and goes back to the work of Kato in \cite{kato1975cauchy}, but a more modern take on 
this general theory can be found in \cite{ifrim2022localwellposednessquasilinearproblems}. 
Note that in \cite{arosio1991mildly}, there is a focus on 
possible degeneracy of the non-linearity $1+N$, which they allow do decay to $0$ in a ``mild'' manner. 
While this is interesting, we will not focus on that here, opting instead for an analysis of 
the low regularity problem, which is studied in \cite{arosio1996well}. Using the 
techniques in \cite{arosio1996well} or \cite{ifrim2022localwellposednessquasilinearproblems},
\eqref{eq:kirchhoff} admits Hadamard local well-posedness for initial data 
$(u_0, u_1) \in H_x^{3/2} \times H_x^{1/2}.$ Hadamard local well-posedness refers to the existence
of a continuous, but not necessarily Lipschitz, map from initial data to a uniform in time space, 
in our case  $C_t^0(H_x^{3/2}) \cap C_t^1(H_x^{1/2}).$ The expectation to achieve only continuous dependence 
on initial data is named after Hadamard for his work for instance in \cite{hadamard1923lectures}.
Again, see \cite{ifrim2022localwellposednessquasilinearproblems} for a modern take on these ideas. 
Following \cite{ifrim2022localwellposednessquasilinearproblems}, 
the well-posedness in specifically $H_x^{3/2} \times H_x^{1/2}$ is possible because of two ideas. 
The first, which will not 
play a role in this paper, is the analysis of the linearized system (we explain why this 
analysis is troublesome at lower regularity in 
Section \ref{sec:lin}).
The second is that 
standard nonlinear energy estimates are cubic: putting 
$$E^s = \frac12 (1 + N(t)) \norm{\abs{\xi}^{1+s} \hat u}_{L_x^2}^2 
+ \frac12 \norm{\abs{\xi}^s \hat u'}_{L_x^2}^2$$
($E^s$ is the linearized energy coming from treating $N$ as a constant) we have that 
\begin{align}
    \nonumber\partial_t E^s &= \frac12 \partial_t N(t) \norm{\abs{\xi}^{1+s} \hat u}_{L^2}^2\\
&= N'(\norm{u}_{\dot H_x^1}^2)\int\abs{\xi_1}^{2+2s} \abs{\hat u(\xi_1)}^2 d\xi_1
\int \abs{\xi_2}^2 \Re(\hat u(\xi_2) \bar{\hat u}'(\xi_2)) d\xi_2 \label{eq:quad_energy} \\
                   \nonumber&\lesssim \norm{N'}_{L^\infty} E^s(t) \norm{u(t)}_{\dot H_x^{3/2}} 
                   \norm{u'(t)}_{\dot H_x^{1/2}} \lesssim \norm{N'}_{L^\infty} E^s E^{1/2}.
\end{align}
Then, setting $s = 1/2$, we can use Gronwall's inequality to get control of the 
$\dot H_x^{3/2} \times \dot H_x^{1/2}$ norm on a time scale inversely proportional to the initial 
energy. In particular if we denote the initial data size as 
$$\tilde \epsilon = \norm{(u_0, u_1)}_{\dot H_x^{3/2} \times \dot H_x^{1/2}},$$  
then we have control of $E^{1/2}$ on a time scale proportional to $\tilde \epsilon^{-2}.$
This in turn 
controls lower energies on the same time scale which is necessary to control 
the input to $N$, and thus results in the well-posedness in the inhomogeneous space 
$H_x^{3/2} \times H_x^{1/2}$. 

This work partially improves the result of \cite{arosio1996well} 
by proving the existence of weak solutions for initial data in the lower regularity Sobolev 
space $\dot H_x^{5/4} \times \dot H_x^{1/4}$. Note that this work's method is not strong enough 
to prove the uniqueness of such solutions or continuous dependence on initial data which is
discussed in Section \ref{sec:lin}.

Recently, the $\tilde \epsilon^{-2}$ time of existence has been improved in \cite{Baldi_2020} to $\tilde\epsilon^{-4}$ 
on the torus  with the special nonlinearity $N(\norm{\nabla u}_{L^2}^2) = \norm{\nabla u}_{L^2}^2$
and in \cite{Baldi_longer} to $\tilde\epsilon^{-6}$ under the same conditions 
for special ``non-resonant'' initial data. 
These works use the explicit spectral structure of the Laplacian 
on the torus as well as the more algebraic structure of the nonlinearity to do an explicit 
\emph{normal form transformation}, introduced by \cite{Shatah85}. Because of the inherent 
derivative loss of this method for quasilinear problems and complications from the spectrum of 
the Laplacian on the torus in higher dimensions, \cite{Baldi_2020} and \cite{Baldi_longer} 
require regularity 
$H_x^{3/2} \times H_x^{1/2}$ in one spatial dimension and $H_x^{2} \times H_x^{1}$ in higher 
spatial dimensions. 

This work directly improves the result of \cite{Baldi_2020} by lowering the regularity of 
the norm on which existence depends to $\dot H_x^{5/4} \times \dot H_x^{1/4}$ regardless of dimension 
and by generalizing the nonlinearity and ambient space significantly. Our results are 
stated in $\bb R^n$ for simplicity, but may be generalized to any Hilbert space and positive self-adjoint 
linear operator $-\Delta$ through the spectral theorem provided that Sobolev spaces are 
defined with respect to $\Delta$. This is because all of our work 
takes place on the frequency side and is agnostic to the specific structure of the spectrum of 
$\Delta$.
We will explain in more detail how this work's method relates to 
the method of normal forms used in \cite{Baldi_2020} and \cite{Baldi_longer} 
in Section \ref{ss:heuristics}.

We note that 
another well studied aspect of \eqref{eq:kirchhoff} is global well-posedness, which seems to 
require either extra dissipation in the equation or high regularity. With dissipation terms 
of one of the following forms:
$$-\partial_t u, \qquad -\Delta \partial_t u, \qquad \abs{\partial_t u}^\beta \partial_t u$$
\cite{ono1997global} and \cite{ONO19974449} study global existence and blow-up in the presence of 
an algebraic nonlinearity $\abs{u}^p u$, again with a more general nonlinearity which can mildly 
degenerate. These works get various threshold conditions for blowup and 
global existence based on initial energy. 

Another method in the literature for achieving global existence is by using initial data with
analytic structure using the Gevrey function spaces; see \cite{d1992global} and
\cite{nishihara1984global}.

\subsection{Results and Heuristics}\label{ss:heuristics}
As for our analysis of \eqref{eq:kirchhoff}, we will work exclusively on the frequency side, which 
enables the generalization of the results to any Hilbert space $H$ and positive self-adjoint operator $-\Delta$ on 
$H$ by the spectral theorem. We state
Theorems \ref{theorem:main} and \ref{theorem:enhanced} in $\bb R^n$ for simplicity only.
On the frequency side, the Sobolev norm $\dot H^s$ is defined by
$\norm{ \abs{\xi}^s \hat u}_{L^2}$
taken with respect to the spectral measure associated to $-\Delta$. By Plancherel's Theorem this agrees with the usual definition on domains $U$ with the Dirichlet Laplacian. 

In this work, the backbone of our results stem from improving the quadratic nonlinear energy 
estimate \eqref{eq:quad_energy} to a cubic estimate in the case of initial data small in 
$\dot H_x^1 \times L^2_x$. 

\begin{theorem}
    \label{theorem:main}
    Suppose that $u \in C_t^0([0,T], H_x^{s_0+1}) \cap C_t^1([0,T], H_x^{s_0})$, $s_0 \geq 1/4$ 
    solves \eqref{eq:kirchhoff}. Then there exists a family of modified energies 
    $E^s(t)$ with the property that for any $t \in [0, T]$ for which 
    $$ \norm{(u(t), u'(t))}_{\dot H_x^{1} \times L_x^2} \leq \delta \ll 1$$
    then for all $0 \leq s \leq s_0$ we have the comparability bound
    $$\norm{(u(t), u'(t))}^2_{\dot H_x^{1+s} \times \dot H_x^{s}} \sim_{s_0} E^{s}(t)$$
    and the quintic energy estimate 
    $$\partial_t E^{s}(t) \lesssim_{s_0} E^{s}(t) (E^{1/4}(t))^2.$$
\end{theorem}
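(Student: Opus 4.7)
The plan is to construct the modified energy $E^s$ by starting from the natural linearized energy
\[
E_0^s := \tfrac{1}{2}(1 + N(K))\,\|u\|_{\dot H^{1+s}}^2 + \tfrac{1}{2}\|u'\|_{\dot H^s}^2, \qquad K(t) := \|u(t)\|_{\dot H^1}^2,
\]
and then adding a Taylor-style cascade of corrections designed to cancel the leading-order error at each step. On the Fourier side, the equation $\hat u'' = -(1+N(K))|\xi|^2 \hat u$ together with the product rule gives
\[
\partial_t E_0^s = \tfrac{1}{2} N'(K)\,\partial_t K\cdot \|u\|_{\dot H^{1+s}}^2,
\]
a quartic remainder in $(u,u')$ whose flux factor $\partial_t K = 2\int |\xi|^2 \Re(\hat u \bar{\hat u}')\,d\xi$ cannot be controlled pointwise in $t$ by $E^s$ and $E^{1/4}$ alone.

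The first two corrections I would add are $B_1 = -\tfrac{1}{2} N(K)\,\|u\|_{\dot H^{1+s}}^2$, whose time derivative cancels $\partial_t E_0^s$ at the cost of producing a new error $-N(K)L^s$ with $L^s := \int |\xi|^{2+2s}\Re(\hat u \bar{\hat u}')\,d\xi = \tfrac{1}{2}\partial_t \|u\|_{\dot H^{1+s}}^2$, and $B_2 = -\tfrac{N(K)}{2(1+N(K))}\,\|u'\|_{\dot H^s}^2$, chosen using the identity $\partial_t \|u'\|_{\dot H^s}^2 = -2(1+N(K))L^s$ so that $\partial_t B_2$ eliminates the $-N(K)L^s$ term. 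The resulting $\partial_t(E_0^s + B_1 + B_2)$ is again quartic but now takes the form $-\tfrac{N'(K)}{2(1+N(K))^2}\,\partial_t K\cdot \|u'\|_{\dot H^s}^2$, with coefficients built from $N$ and $1+N$. Continuing, one iteratively builds $B_3, B_4, \ldots$ by alternately invoking the IBP identities for $\partial_t \|u\|_{\dot H^{1+s}}^2$ and $\partial_t \|u'\|_{\dot H^s}^2$; each level of correction involves one more $K$-derivative of $N$ and yields a correction of size $O(\delta^{2(k-1)})$ relative to the preceding one. Summing, $E^s := E_0^s + \sum_{k \geq 1} B_k^s$ is a Taylor series in the smallness parameter $K \leq \delta^2 \ll 1$ that converges geometrically.

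Because each $B_k^s$ is at most $O(\delta^{2k})\cdot E_0^s$, the full sum satisfies $|E^s - E_0^s| \lesssim \delta^2 E_0^s$, and the comparability $E^s \sim_{s_0} \|(u,u')\|^2_{\dot H^{1+s}\times \dot H^s}$ is immediate from that of $E_0^s$. The time derivative of the summed series collapses (by design) to a sextic tail in which at least two spare factors of $K = \|u\|_{\dot H^1}^2$ are distributed via Cauchy--Schwarz in frequency and Littlewood--Paley dyadic decomposition to produce two copies of $E^{1/4} \sim \|u\|_{\dot H^{5/4}}^2 + \|u'\|_{\dot H^{1/4}}^2$ (using that the $|\xi|$-weights of the tail split, after suitable interpolation between $\dot H^1$, $\dot H^{5/4}$, and $\dot H^{1/4}$, into one factor each of $E^s$-type and $E^{1/4}$-type bilinears).

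The principal difficulty is showing that the Taylor cascade truly improves at each stage, i.e., that the successive corrections genuinely multiply the remainder by a factor of $K$ rather than cycling indefinitely among the quartic shapes $\partial_t K\cdot\|u\|_{\dot H^{1+s}}^2$, $L^s$, and $\partial_t K\cdot\|u'\|_{\dot H^s}^2$; this is the heart of the incremental-leading-order philosophy mentioned in the abstract and likely requires identifying a closed-form generating function for the coefficients $B_k^s$. A secondary obstacle is that, unlike previous modified-energy arguments driven by wave-packet resonance, no purely algebraic normal-form transformation is available here since the nonlinearity depends only on the scalar $K(t)$, so the final sextic splitting must be carried out directly in frequency space by exploiting the interplay between $\|u\|_{\dot H^1}$, $\|u\|_{\dot H^{5/4}}$, and $\|u'\|_{\dot H^{1/4}}$ on each dyadic shell.
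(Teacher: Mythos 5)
Your cascade does not converge; it cycles. Start from your explicit formulas: $\partial_t(E_0^s+B_1)=-N(K)L^s$ with $L^s=\int |\xi|^{2+2s}\Re(\hat u\bar{\hat u}')d\xi$, and $\partial_t(E_0^s+B_1+B_2)=-\tfrac{N'(K)\partial_t K}{2(1+N(K))^2}\|u'\|_{\dot H^s}^2$. To cancel the latter by the same mechanism you must take $B_3=G(K)\|u'\|_{\dot H^s}^2$ with $G'(K)=\tfrac{N'(K)}{2(1+N(K))^2}$ and $G(0)=0$, which forces $G(K)=\tfrac{N(K)}{2(1+N(K))}$, i.e.\ $B_3=-B_2$ exactly. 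The cascade therefore returns to $E_0^s+B_1$ after two steps and the remainder never drops in homogeneity. The ratio between consecutive coefficients is $(1+N(K))^{-2}=1+O(\delta^2)$, not $O(\delta^2)$, so the claimed geometric decay is false and there is no sextic tail.

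This is not an accident of a poor choice of $B_k$. Every correction in your ansatz is a product of a $K$-dependent scalar coefficient and a single-frequency integral ($\|u\|_{\dot H^{1+s}}^2$ or $\|u'\|_{\dot H^s}^2$). Within that class of energies the only quantities available are the two scalars $\|u\|_{\dot H^{1+s}}^2$, $\|u'\|_{\dot H^s}^2$ and the parameter $K$, and the flux $\partial_t K = 2\int|\xi|^2\Re(\hat u\bar{\hat u}')d\xi$ can never be eliminated because it is an independent input. Even if the cascade terminated at, say, $E_0^s+B_1+B_2$, the best bound on the remaining error $N'(K)\partial_t K\cdot\|u'\|_{\dot H^s}^2$ comes from $|\partial_t K|\le 2\|u\|_{\dot H^{3/2}}\|u'\|_{\dot H^{1/2}}\sim E^{1/2}$, giving $\partial_t E^s\lesssim E^s\,E^{1/2}$ -- the cubic estimate already known, with lifespan $\epsilon^{-2}$, not the quintic $\epsilon^{-4}$.

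The paper's modified energy is structurally different. The leading correction is an integral $\int_{\xi_1,\xi_2} A\,F(\xi_1\wedge\xi_2)\,E^s_{\xi_1\xi_2}\,\vec{d\xi}$ whose density is \emph{not} separable in $\xi_1,\xi_2$: the coefficient $b_{\xi_1\xi_2}=-\tfrac14|\xi_1|^2|\xi_2|^2\tfrac{|\xi_1|^{2s}-|\xi_2|^{2s}}{|\xi_1|^2-|\xi_2|^2}$ is a genuine divided difference and there is a cross term $c_{\xi_1\xi_2}\Re(\hat u(\xi_1)\bar{\hat u}'(\xi_1))\Re(\hat u(\xi_2)\bar{\hat u}'(\xi_2))$ which your ansatz cannot produce (it is not $|\hat u|^2$ against anything). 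It is precisely these two features that make the remainder after differentiation a six-factor expression with two \emph{additional} spare copies of $u$, and only then does the frequency distribution close at $s=1/4$. The Taylor-series aspect alluded to in the abstract is not in the scalar $K$ but in the \emph{partial} low-frequency mass $\int_{|\xi|\le r}|\xi|^2|\hat u|^2$, through the function $F(r)=(1+N(\int_{|\xi|\le r}|\xi|^2|\hat u|^2))^{-3/2}$ and the paradifferential coefficient $A(\xi_1\wedge\xi_2)$, both depending only on frequencies below the interacting ones so that extra time derivatives land on controlled factors. Your $K$-dependent coefficients depend on all frequencies and would not enjoy this property even if the series converged.
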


Note that the constants will depend on the nonlinearity $N$, but we suppress the dependence for brevity.
There is no obstruction to proving the above theorem for $s < 0$, but we exclude this case
since it is not required for the following results. However, one might desired Theorem \ref{theorem:main} for certain 
$s < 0$ in a proof of lower regularity local well-posedness, which is discussed in Section \ref{sec:lin}.

A corollary of Theorem \ref{theorem:main} and the local well-posedness 
in $H_x^{s+1} \times H_x^{s}$ for $s \geq 1/2$ is an enhanced lifespan for
initial data small in 
$\dot H_x^1 \times L^2_x$. 

\begin{theorem}
    \label{theorem:enhanced}
    Let $\delta$ be the small constant in Theorem \ref{theorem:main}.
    Suppose that the initial data $(u_0, u_1) \in H_x^{s_0+1} \times H_x^{s_0},$ $s_0 \geq 1/2$ has 
    $$\norm{(u_0, u_1)}_{\dot H_x^{5/4} \times \dot H_x^{1/4}} = \epsilon$$ 
    and
    $$\norm{(u_0, u_1)}_{\dot H_x^{1} \times L_x^2} = \delta_0 \ll_{s_0, \epsilon} \delta.$$
    Then 
    there exists a unique solution 
    $$u \in C_t^0([0,T], H_x^{s+1}) \cap C_t^1([0,T],H_x^{s})$$ 
    to \eqref{eq:kirchhoff} where $T \sim_{s_0} 1/\epsilon^4.$
    Furthermore, for all $0 \leq s \leq s_0$, we have the energy estimates on $[0,T]$
    $$\sup_{0 \leq t \leq T}\norm{(u, \partial_t u)}_{\dot H_x^{s+1} \times \dot H_x^{s}} 
    \lesssim_{s_0, \epsilon} \norm{(u_0, u_1)}_{\dot H_x^{s+1} \times \dot H_x^s}.$$
    In particular
    $$\sup_{0 \leq t \leq T}\norm{(u, \partial_t u)}_{\dot H_x^{1} \times L_x^2} \leq \delta$$
    and
    $$\sup_{0 \leq t \leq T}\norm{(u, \partial_t u)}_{\dot H_x^{5/4} \times \dot H_x^{1/4}} \lesssim \epsilon.$$
\end{theorem}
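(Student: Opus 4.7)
The plan is to combine the local well-posedness theory (available in $H^{s_0+1}\times H^{s_0}$ for $s_0\geq 1/2$ from Arosio--Spagnolo or Ifrim--Tataru) with a continuity/bootstrap argument driven by the modified energies of Theorem \ref{theorem:main}. Standard theory already gives a unique solution $u\in C_t^0(H^{s_0+1})\cap C_t^1(H^{s_0})$ on some maximal interval $[0,T^*)$ together with a blow-up criterion in terms of that norm, so it suffices to propagate the required a priori bounds up to time $T\sim \epsilon^{-4}$. Define the bootstrap interval as the set of $t\in[0,T^*)$ on which $\|(u,u')\|_{\dot H^1\times L^2}(t)\leq \delta$, so Theorem \ref{theorem:main} applies and we have the comparability $\|(u,u')\|_{\dot H^{s+1}\times \dot H^s}^2\sim_{s_0}E^s(t)$ for every $0\leq s\leq s_0$.

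First I would run the argument at the critical Sobolev level $s=1/4$. There Theorem \ref{theorem:main} yields the autonomous differential inequality
\[
\partial_t E^{1/4}(t) \lesssim_{s_0} \bigl(E^{1/4}(t)\bigr)^3,
\]
which integrates to $E^{1/4}(t)\leq E^{1/4}(0)\bigl(1-C t\,E^{1/4}(0)^2\bigr)^{-1/2}$. Since $E^{1/4}(0)\sim \epsilon^2$, this gives $E^{1/4}(t)\lesssim \epsilon^2$ on a time interval of length $T\sim_{s_0}\epsilon^{-4}$, which is exactly the claimed quintic lifespan and establishes the $\dot H^{5/4}\times \dot H^{1/4}$ bound.

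The main obstacle is to verify that the smallness hypothesis at the $\dot H^1\times L^2$ level persists throughout $[0,T]$, so that Theorem \ref{theorem:main} can actually be invoked on the entire bootstrap interval. For this I would apply Theorem \ref{theorem:main} at $s=0$ to obtain
\[
\partial_t E^0(t) \lesssim_{s_0} E^0(t)\,\bigl(E^{1/4}(t)\bigr)^2.
\]
Inserting the just-proved bound $E^{1/4}(t)\lesssim \epsilon^2$ and using Gronwall's inequality yields
\[
E^0(t)\leq E^0(0)\exp\!\Bigl(C\!\int_0^t (E^{1/4})^2\,d\tau\Bigr) \lesssim_{s_0} \delta_0^2\,\exp(C\,\epsilon^4\,T)\lesssim_{s_0,\epsilon} \delta_0^2
\]
for $T\sim\epsilon^{-4}$. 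Since $\delta_0\ll_{s_0,\epsilon}\delta$ by hypothesis, the a priori bound on $\|(u,u')\|_{\dot H^1\times L^2}$ is strictly better than $\delta$, closing the bootstrap by a standard continuity argument and certifying that the modified energies remain in force.

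Finally, for any $s\in[0,s_0]$ the same Gronwall argument applied to $\partial_t E^s\lesssim_{s_0} E^s(E^{1/4})^2$ with the uniform bound on $E^{1/4}$ yields $E^s(t)\lesssim_{s_0,\epsilon} E^s(0)$ on $[0,T]$, and comparability translates this into the stated $\dot H^{s+1}\times \dot H^s$ estimates; in particular $s=s_0$ controls the high norm and, combined with the blow-up criterion from local well-posedness, shows $T<T^*$, so the solution genuinely exists on $[0,T]$. Uniqueness in the class $C_t^0 H^{s+1}\cap C_t^1 H^s$ with $s\geq 1/2$ is inherited directly from the local theory. The only subtle point beyond routine bookkeeping is the choice of smallness threshold $\delta_0\ll_{s_0,\epsilon}\delta$, which must absorb the Gronwall constant $\exp(C\,\epsilon^4 T)=O(1)$ arising in the low-norm propagation.
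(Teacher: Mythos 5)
Your proposal follows essentially the same route as the paper: a bootstrap/continuity argument that first integrates the autonomous cubic ODE for $E^{1/4}$ to get the quintic lifespan, then uses that uniform bound plus Gronwall at $s=0$ (absorbing the $e^{C\epsilon^4 T}=O(1)$ factor into the smallness of $\delta_0$) to close the $\dot H^1\times L^2$ bootstrap, then runs the same Gronwall at general $s$, and finally continues the solution via the local well-posedness theory. One bookkeeping item you skip over, which the paper handles explicitly: the modified energies only control homogeneous Sobolev norms, whereas the local theory and its continuation criterion live in the inhomogeneous space $H^{s_0+1}\times H^{s_0}$, so to rule out blow-up at $T^*$ you must also bound the very-low-frequency piece $\norm{u(t)}_{L^2}$, which is done by integrating $u'$ in time, $\norm{u(t)}_{L^2}\le \norm{u_0}_{L^2}+\int_0^t\norm{u'(\tau)}_{L^2}\,d\tau$, and using the already-established $L^2$ bound on $u'$.
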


Note that this extended lifespan is a significant step towards 
local well-posedness in the lower regularity space $H_x^{5/4} \times H_x^{1/4}.$ In fact, a direct 
consequence of this result is the existence of weak solutions for compactly 
supported $H_x^{5/4} \times H_x^{1/4}$ initial
data, which is given in the next theorem. The reason that we require compactly supported initial 
data is that the result of Theorem \ref{theorem:enhanced} is only enough to give solutions as weak limits. 
In particular, to guarantee that these weak limits are solutions we need strong control over 
$\norm{\nabla u(t)}_{C^0_t(L^2_x)},$ which requires some compactness. 

This constraint is reflective of the fact that Theorem \ref{theorem:main} is not strong enough to 
ensure that solutions are unique limits of smooth solutions, which is why we call the solutions constructed 
in the following theorem \emph{weak solutions}. In Section \ref{sec:lin}, we discuss why Theorem \ref{theorem:main} cannot be used to prove that solutions are unique. 

\begin{theorem}\label{thm:weak_solutions}
    Let $(u_0,u_1) \in H_x^{5/4} \times H_x^{1/4}$ be compactly supported with  
    $$\norm{(u_0, u_1)}_{\dot H_x^1 \times L_x^2} = \delta_0 \ll \delta$$
    where $\delta$ is the same small constant as in Theorem \ref{theorem:enhanced}.
    Denote the size of the initial data in $\dot H_x^{5/4} \times \dot H_x^{1/4}$ by 
    $$\epsilon = \norm{(u_0, u_1)}_{\dot H_x^{5/4} \times \dot H_x^{1/4}}.$$
    Then, for $T \sim 1/\epsilon^4$, as in Theorem \ref{theorem:enhanced}, 
    there exists a weak solution 
    $$u \in C_t^0([0,T], H_x^{5/4})\cap C^{1}_t([0,T], H_x^{1/4})$$
    which solves \eqref{eq:kirchhoff} for times in $[0,T)$ in the sense of distributions:

    For all $\phi \in C^\infty_c([0,T) \times U)$ (which vanish at $t=T$) we have 
    $$\int_{[0,T]\times \bb R^n} u \partial_t^2 \phi - u \Delta \phi  - u N(\norm{\nabla u}^2_{L^2})\Delta \phi \, dt dx
    = \int_{\bb R^n} u_0 \partial_t \phi \, dx - \int_{\bb R^n} u_1 \phi \, dx.$$
\end{theorem}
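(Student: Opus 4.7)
The plan is to construct the weak solution as a weak-$*$ limit of smooth solutions produced by Theorem \ref{theorem:enhanced} applied to a regularized sequence of initial data. First I would mollify to obtain a sequence $(u_0^n, u_1^n) \in C^\infty_c \times C^\infty_c$ converging to $(u_0, u_1)$ in $H_x^{5/4} \times H_x^{1/4}$, all supported in a common ball, and satisfying the uniform smallness
$$\norm{(u_0^n, u_1^n)}_{\dot H_x^1 \times L_x^2} \leq 2 \delta_0 \ll \delta, \qquad \norm{(u_0^n, u_1^n)}_{\dot H_x^{5/4} \times \dot H_x^{1/4}} \leq 2 \epsilon.$$
Theorem \ref{theorem:enhanced} then produces smooth solutions $u^n$ on a common time interval $[0, T]$ with $T \sim 1/\epsilon^4$, enjoying the uniform bound
$$\sup_{t \in [0,T]} \norm{(u^n(t), \partial_t u^n(t))}_{\dot H_x^{5/4} \times \dot H_x^{1/4}} \lesssim \epsilon.$$
Since each $u^n$ solves a linear wave equation with bounded time-dependent speed $\sqrt{1 + N(\norm{\nabla u^n(t)}_{L^2}^2)}$, classical finite speed of propagation for the frozen-coefficient problem forces every $u^n(t, \cdot)$ to be supported in a single compact set $K \subset \bb R^n$ for all $t \in [0,T]$ and all $n$.

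Next, Banach--Alaoglu along a subsequence yields $u^n \rightharpoonup u$ weakly-$*$ in $L^\infty_t([0,T], H_x^{5/4})$ and $\partial_t u^n \rightharpoonup \partial_t u$ weakly-$*$ in $L^\infty_t([0,T], H_x^{1/4})$. The main obstacle is passing to the limit inside the nonlinear coefficient $N(\norm{\nabla u^n(t)}_{L^2}^2)$, which demands strong convergence of $\norm{\nabla u^n(t)}_{L^2}^2$ and not merely the weak convergence supplied by Banach--Alaoglu. This is exactly where the compact support hypothesis becomes essential: restricting to $K$ we have $u^n$ uniformly bounded in $L^\infty_t H^{5/4}(K)$ and $\partial_t u^n$ uniformly bounded in $L^\infty_t H^{1/4}(K)$, and because $H^{5/4}(K)$ embeds compactly into $H^1(K)$, the Aubin--Lions lemma produces (after a further subsequence) strong convergence $\nabla u^n \to \nabla u$ in $C^0([0,T], L^2_x(K))$. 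Continuity of $N$ then upgrades this to uniform convergence of the scalar coefficient $N(\norm{\nabla u^n(\cdot)}_{L^2}^2) \to N(\norm{\nabla u(\cdot)}_{L^2}^2)$ on $[0,T]$.

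Finally I would pass to the limit in the weak formulation. Since each $u^n$ is smooth, integration by parts against any admissible $\phi$ gives
$$\int_{[0,T]\times \bb R^n} u^n \partial_t^2 \phi - u^n \Delta \phi - u^n N(\norm{\nabla u^n}_{L^2}^2)\Delta \phi \, dt dx = \int_{\bb R^n} u_0^n \partial_t \phi \, dx - \int_{\bb R^n} u_1^n \phi \, dx.$$
The linear $u^n \partial_t^2 \phi$ and $u^n \Delta \phi$ terms pass to the weak-$*$ limit, the initial data terms converge by strong convergence of $(u_0^n, u_1^n)$ in $H^{5/4} \times H^{1/4}$, and the nonlinear term is handled by splitting
$$N_n(t) u^n \Delta\phi = N(t)\, u^n \Delta\phi + (N_n(t)-N(t))\, u^n \Delta\phi,$$
where $N_n(t) = N(\norm{\nabla u^n(t)}_{L^2}^2)$; the first summand converges since $N(t)\Delta\phi$ is a fixed $L^2_{t,x}$ function and $u^n \rightharpoonup u$ weakly in $L^2_{t,x}$, while the second vanishes by the uniform convergence $N_n \to N$. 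The regularity class $u \in C^0_t H_x^{5/4} \cap C^1_t H_x^{1/4}$ then follows from the uniform bounds, the strong convergence of $\nabla u^n$ on compact sets, and standard continuity arguments for weak solutions of second-order hyperbolic equations.
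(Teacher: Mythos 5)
Your proposal follows essentially the same route as the paper: approximate the initial data by smoother compactly supported data, invoke Theorem \ref{theorem:enhanced} to get solutions on a common time interval $[0,T]$ with $T\sim 1/\epsilon^4$ and uniform $L^\infty_t H^{5/4}\cap W^{1,\infty}_t H^{1/4}$ bounds, use finite speed of propagation to confine all approximants to a fixed compact set, apply Aubin--Lions to upgrade the weak-$*$ limit to strong convergence in $C^0_t H^1_x$, and pass to the limit in the weak formulation by splitting the nonlinear term. The decomposition of the nonlinear term you use is the same as the paper's up to a relabeling of which summand is which.

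One point deserves a flag. Your closing sentence, that $u\in C^0_t H^{5/4}_x\cap C^1_t H^{1/4}_x$ ``follows from the uniform bounds, the strong convergence of $\nabla u^n$ on compact sets, and standard continuity arguments,'' undersells the difficulty. The uniform bounds only give $L^\infty_t H^{5/4}_x\cap W^{1,\infty}_t H^{1/4}_x$, and Aubin--Lions gives continuity only in the \emph{weaker} topology $C^0_t H^1_x$; a bounded-but-discontinuous function of $t$ into $H^{5/4}_x$ can perfectly well be continuous into $H^1_x$. The paper closes this gap by a genuine additional argument: it re-reads Theorem \ref{theorem:main} in a distributional form to conclude that the modified energy $E^{1/4}(t)$ is Lipschitz, hence continuous, in time; this continuity of the norm, combined with the low-topology continuity from Aubin--Lions, then forces continuity in $H^{5/4}_x\times H^{1/4}_x$. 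If you want your write-up to be self-contained you should either reproduce that argument or cite a theorem that delivers exactly this upgrade for quasilinear wave equations with weak-$*$ limits; ``standard continuity arguments for second-order hyperbolic equations'' is not a substitute here because the equation is quasilinear and the available a priori bound is on the \emph{modified} energy, not the naive one. Apart from this, your proposal is correct.
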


We call the energies $E^s$ in Theorem \ref{theorem:main} \emph{modified energies} for this problem. 
We will see the exact form that $E^s$ takes in Definition \ref{def:model_energy} 
in a model case of the  nonlinearity $N$ and 
in Definition \ref{def:full_energy} in the general case of the nonlinearity $N$. 

It is the main challenge of this work to construct $E^s$, and the purpose of the following discussion is to explain the motivation for and the difficulty of the construction.

Ultimately, a modified energy takes advantage of special time oscillations in the standard energy 
estimate \eqref{eq:quad_energy}. Having time oscillations suggests that we can improve the 
the integral of \eqref{eq:quad_energy} by taking advantage of cancellation in the integral.
The idea to capture these oscillations via modified energies is due 
to \cite{HITW13}, and improves over simply integrating by parts in quasilinear problems 
by making clear which part of the time oscillations can be absorbed into the original energy. 

In contrast to \eqref{eq:kirchhoff}, in \cite{HITW13}, nonlinearities are algebraic in nature, 
and so time
oscillations can be fully described by interactions with respect to spatial frequencies. In particular, 
this allows the authors in \cite{HITW13} to deduce a change of variables, or 
\emph{normal form transformation}. See \cite{Shatah85} for the use of normal forms in semilinear problems. The normal form transformation is unbounded for quasilinear problems 
and cannot be used directly. However, \cite{HITW13} shows how to use a normal form transformation to construct an associated \emph{modified energy}. 
 
For quasilinear problems, this method has seen much success. For their use in local well-posedness 
see \cite{hunter2014dimensionalwaterwavesholomorphic} and for their use in enhanced lifespan see \cite{ifrim2017lifespansmalldatasolutions}. 

The nonlinearity in \eqref{eq:kirchhoff} is not algebraic and in fact integrates out the spatial 
variable completely, which makes finding a normal form directly hopeless. The novel idea of this 
work is to derive a modified energy in the following two steps. First, we find the leading order part of the modified energy
by looking at the algebraic structure of 
\eqref{eq:quad_energy}, which is akin to the division problem in the standard normal form method. To complete the construction, we interpret the remaining part as the sum of 
perturbative terms and a leading order non-perturbative term, which is, up to a twist, the product of \eqref{eq:quad_energy}, and a low frequency perturbative term. We can then derive the modified energy as a Taylor series, again with a twist, in this low frequency perturbative term.
This idea is akin to an exponential conjugation 
present in \cite{hunter2014dimensionalwaterwavesholomorphic}. In that work, a transformation 
$$\tilde u = e^{\phi(u)} u$$ is used. Considering the Taylor expansion of the exponential, this 
corresponds to a sequence of polynomial corrections to $u$. The relationship between 
this exponential conjugation and a series of corrections was noted in 
\cite{ai2023dimensionalgravitywaveslow}, but, to our knowledge, this work is the first time 
a series of corrections was required because of the absence of a clear algebraic normal form 
transformation. In particular, it is not clear that the resulting Taylor expansion 
in this work can be expressed as the energy after a simple change of variables because of 
its unique dependence on the nonlinearity $N$ as well as lower frequency parts of the solution $u$. 
The expansion is reflected in Definition \ref{def:f} in a model case for the 
nonlinearity $N$ and Definition \ref{def:gen_f}
in the full case.

Before getting into the proof, 
we present a heuristic for why 
we expect time oscillations to be present in \eqref{eq:quad_energy}.

~

We will explain this heuristic in the simplest setting where the nonlinearity is given by
$$N(\norm{\nabla u}_{L^2}^2) = \norm{\nabla u}_{L^2}^2.$$
First we decompose a solution $u$ to \eqref{eq:kirchhoff} into dyadic spatial frequency 
pieces 
$$u = \sum u_\lambda$$
where $\hat u_\lambda$ is localized near frequencies of size $\lambda$. At least heuristically, 
this diagonalizes \eqref{eq:kirchhoff} to 
$$u''_\lambda - \lambda^2(1 + \sum_{\mu} \mu^2 \abs{u_\mu}^2) u_\lambda = 0.$$
As long as the $\dot H_x^1$ norm of $u$ is small, we expect $u_\lambda$ to oscillate in time 
at frequency approximately $\lambda$. Being more precise, for small times we do not expect 
the $\dot H_x^1$ norm of $u$ to change very much and so to first order in $t$, the time frequency, which 
we will denote $\phi_\lambda(t)$, should be 
$$\phi_\lambda(t) = \lambda t \sqrt{1 + \norm{u_0}_{\dot H_x^1}^2} + \mathcal O(t^2).$$
Then we can heuristically write 
$$\hat u_\lambda(t) \approx c^+_\lambda e^{i\phi_\lambda(t)} + c^-_\lambda e^{-i\phi_\lambda(t)}$$
where $c^{\pm}_\lambda$ depends on $(u_0)_\lambda$ and $(u_1)_\lambda$. 
(We pretend that $c$ is constant when in reality it varies over frequency regions, all this 
does is simplify the notation below, which is heuristic anyway).
Using this heuristic decomposition we look at the energy estimate terms \eqref{eq:quad_energy}.
We see 
\begin{align*}
    \int\abs{\xi_1}^{2+2s} \abs{\hat u(\xi_1)}^2 d\xi_1
    &\approx 
    \sum_{\lambda_1} \lambda_1^{2+2s} (c^+_{\lambda_1} e^{i\phi_{\lambda_1}} 
        + c^-_{\lambda_1} e^{-i\phi_{\lambda_1}})(\bar c^+_{\lambda_1} e^{-i\phi_{\lambda_1}} 
        + \bar c^-_{\lambda_1} e^{i\phi_{\lambda_1}})\\
    &=
    \sum_{\lambda_1} \lambda_1^{2+2s} (\abs{c^+_{\lambda_1}}^2  + \abs{c^-_{\lambda_1}}^2 
    + 2\Re(c^+_{\lambda_1}\bar c^-_{\lambda_1} e^{2i\phi_{\lambda_1}}))
\end{align*}
and 
\begin{align*}
    \int\abs{\xi_2}^{2} \Re(\hat u(\xi_2) \bar{\hat u}'(\xi_2)) d\xi_2
    &\approx 
    \sum_{\lambda_2} \lambda_2^{2} \Re((c^+_{\lambda_2} e^{i\phi_{\lambda_2}} 
        + c^-_{\lambda_2} e^{-i\phi_{\lambda_2}})
        (-i\phi_{\lambda_2}'\bar c^+_{\lambda_2} e^{-i\phi_{\lambda_2}} 
        + i\bar c^-_{\lambda_2}\phi_{\lambda_2}' e^{i\phi_{\lambda_2}}))\\
    &=
    \sum_{\lambda_2} \lambda_2^2 \phi_{\lambda_2}'\Re(i \abs{c^+_{\lambda_2}}^2  + i\abs{c^-_{\lambda_2}}^2)
    + 2\Im(c^+_{\lambda_2}\bar c^-_{\lambda_2} e^{2i\phi_{\lambda_2}})\\
    &= \sum_{\lambda_2} 2\lambda_2^2 \phi_{\lambda_2}' \Im(c^+_{\lambda_2}\bar c^-_{\lambda_2} e^{2i\phi_{\lambda_2}}).
\end{align*}

Thus, since $\phi_\lambda \approx C \lambda t$ 
the product of these two terms can only have no oscillations when $\lambda_1 = \lambda_2$ coming 
from the product of the oscillatory pieces or when $\lambda_2 =0$. We see that when $\lambda_2 = 0$, 
the factor of $\phi_{\lambda_2}'$ is 0 to first order and so, to leading order this will not 
produce a non-oscillatory part. Similarly when $\lambda_1 = \lambda_2 \neq 0$
\begin{align*}
    \lambda^{2+2s}  &2\Re(c^+_{\lambda}\bar c^-_{\lambda} e^{2i\phi_{\lambda}}))
2\lambda^2 \phi_{\lambda}' \Im(c^+_{\lambda}\bar c^-_{\lambda} e^{2i\phi_{\lambda}})\\
&= \lambda^{4+2s} (c^+_\lambda \bar c^-_\lambda e^{2i\phi_\lambda} + \bar c^+_\lambda c^-_\lambda e^{-2i\phi_\lambda})(c^+_\lambda \bar c^-_\lambda e^{2i\phi_\lambda} - \bar c^+_\lambda c^-_\lambda e^{-2i\phi_\lambda})\\ 
&= \lambda^{4+2s} (c^+_\lambda \bar c^-_\lambda)^2 e^{4i\phi_\lambda} - 
(\bar c^+_\lambda c^-_\lambda)^2 e^{-4i\phi_\lambda} 
\end{align*}
which oscillates! Thus, at least heuristically, we expect to be able to add a term to our energy 
$E^s$ whose time derivative cancels \eqref{eq:quad_energy} to leading order.

\subsection{Outline}

The outline of the paper will be as follows: first in Section 
\ref{sec:simple} we discuss the modified energies in Theorem \ref{theorem:main} in the 
model case where $N$ is quadratic in the $\dot H_x^1$ norm of $u$. The bulk of the interesting 
ideas are present here which is why we separate them out. In Section \ref{gen}
we consider the general case of $N \in C^2_{loc}$ which adds only minor details to the 
model case. In Section \ref{sec:implications}, we discuss the implications of Theorem \ref{theorem:main} to the well-posedness 
theory for \eqref{eq:kirchhoff} by giving the proofs of Theorems \ref{theorem:enhanced} and \ref{thm:weak_solutions}. 
In Section \ref{sec:lin}, we discuss how the modified energies in Theorem \ref{theorem:main} fall short of fully
proving local well-posedness at the level of $H_x^{5/4} \times H_x^{1/4}.$ Briefly, the reason this 
method fails for \eqref{eq:kirchhoff} is because the method described in Section \ref{sec:simple} 
cannot be applied to the linearization of \eqref{eq:kirchhoff} around a solution. We show why this 
is using the heuristic presented above as well as by demonstrating exactly where the method in 
Section \ref{sec:simple} fails. Finally, there are several more technical proofs which
would distract from the presentation. These proofs are given in the appendix: Section \ref{sec:appendix}.

\subsection{Notation }
We reserve the notation $u'$ for the time derivative of $u$ and $\hat u$ for the spatial Fourier transform of $u$. We often suppress the dependence of $\hat u$ and $\hat u'$ on $t$ since the dependence on $\xi$ is often more important. 

We will often refer to the quantity 
$$\norm{(u(t), u'(t))}_{\dot H^{s+1}_x \times \dot H^s_x}$$
at some time $t$ as simply 
``the size of a solution $u$ in $\dot H^{s+1}_x \times \dot H^s_x$''
when the context is clear. Similarly, when we refer to ``the initial data'' we are referring to the pair 
$$(u_0, u_1) \in H^{s+1}_x \times H^s_x.$$

In Sections \ref{sec:simple} and \ref{gen} we will benefit from having a shorthand notation for the minimum 
length of two frequency vectors. We use the notation 
$$\xi_1 \wedge \xi_2 := \min(\abs{\xi_1}, \abs{\xi_2}).$$
Further several integrals will depend on several frequencies $\xi_1 \ldots \xi_m$. We use the 
notation $\vec{d\xi}$ to denote that we are integrating with respect to all such variables.

Also, note that in what follows, we assume all integrals converge for our solutions $u$. 
This is easily justified by proving all results for solutions $u$ with initial data in the 
Schwartz space $\mathscr S$ and then using the continuity of the solution map for $H^{3/2} \times H^{1/2}$ initial data to 
infer the estimates on Sobolev spaces by a density argument.

\subsection{Acknowledgments}
The author was supported by the NSF grant DMS-2054975 as well as by a Simons Investigator grant
from the Simons Foundation. Some of this work was carried out while the author was 
participating in the Erwin Schr\"odinger Institute program ``Nonlinear Waves and Relativity" during
Summer 2024. Finally, the author would like to acknowledge Daniel Tataru whose advising made 
this work possible.
\section{The Model Case}
\label{sec:simple}
In this section, we will calculate the modified energy in the model case where 
$$N(\norm{\nabla u}_{L^2}^2) = A\norm{\nabla u}_{L^2}^2$$ 
for some constant $A$ which may have either sign.
The model case will form the basis of the more general case in Section \ref{gen}.

It will be helpful to introduce the following notation for the unmodified energy density at 
frequency $\xi_1$.
\begin{definition} \label{def:unmodified}
Let $u \in C_t^0([0,T], H_x^{s_0+1}) \cap C_t^1([0,T], H_x^{s_0})$ solve \eqref{eq:kirchhoff} in the model case 
with $s_0 \geq 0$. We define the \emph{unmodified energy density at regularity $0 \leq s \leq s_0$} 
as 
$$E^s_{\xi_1} = \frac12 \left(1 + A\int \abs{\xi_2}^2 \abs{\hat u(\xi_2)}^2 d\xi_2\right)
\abs{\xi_1}^{2+2s} \abs{\hat u(\xi_1)}^2
+\frac12 \abs{\xi_1}^{2s} \abs{\hat u'(\xi_1)}^2.$$
Note that the unmodified energy can be recovered from this density by integrating over all frequencies
$$\int E^s_{\xi_1} d\xi_1.$$
\end{definition}
We use this notation to emphasize that this energy is first order in the sense that it 
depends on $u$ through only one frequency, which distinguishes it 
from higher order corrections that we will add later on. In the following subsections,
we will look at how the energy correction arises in this model case. The modified energy is 
the model case is given in Definition \ref{def:model_energy}.

\subsection{Derivation of the Leading Order Correction}\label{ssec:derivation}

A simple calculation shows that 
\begin{equation}\label{eq:init} \partial_t E^s_{\xi_1} = \int_{\xi_2} A 
\abs{\xi_1}^{2+2s}\abs{\xi_2}^2 \abs{\hat u(\xi_1)}^2 
\Re(\hat u(\xi_2) \bar {\hat u}'(\xi_2)) d\xi_2.
\end{equation}
In order to improve our estimates to cubic estimates, we want to 
cancel the above quadratic remainder by using terms of the form
$\hat u(\xi_1)^2 \hat u(\xi_2)^2$. To leading order, the correction is determined
algebraically. Keeping our convention, we will use the notation 
$E^s_{\xi_1\xi_2}$ to denote this leading order correction, which is second order in the 
sense that it depends on $u$ through two frequencies. We define the second order energy correction 
as follows.
\begin{definition} \label{def:coefficients}
Let $u \in C_t^0([0,T], H_x^{s_0+1}) \cap C_t^1([0,T], H_x^{s_0})$ solve \eqref{eq:kirchhoff} in the model 
case
with $s_0 \geq 0$. We define the \emph{second order energy density at regularity $0 \leq s \leq s_0$} 
as 
$$E^s_{\xi_1\xi_2} = a_{\xi_1\xi_2} \abs{\hat u(\xi_1)}^2 \abs{\hat u(\xi_2)}^2
+ b_{\xi_1\xi_2} \abs{\hat u(\xi_1)}^2\abs{\hat u'(\xi_2)}^2
+ c_{\xi_1\xi_2} \Re(\hat u(\xi_1) \bar {\hat u}'(\xi_1))
\Re( \hat u(\xi_2) \bar {\hat u}'(\xi_2)) $$
where 
$$a_{\xi_1\xi_2} = -\frac18 \abs{\xi_1}^2 \abs{\xi_2}^2(\abs{\xi_1}^{2s} + \abs{\xi_2}^{2s})$$
$$b_{\xi_1\xi_2} = -c_{\xi_1\xi_2} = -\frac14 \abs{\xi_1}^2 \abs{\xi_2}^2 
\frac{\abs{\xi_1}^{2s} - \abs{\xi_2}^{2s}}{\abs{\xi_1}^2 - \abs{\xi_2}^2}.$$
\end{definition}

That this cancels the remainder terms coming from the first order energy density $E^s_{\xi_1}$
is given by the following proposition.

\begin{proposition}
    \label{prop:coefficients}
    Let $u \in C_t^0([0,T], H_x^{s_0+1}) \cap C_t^1([0,T], H_x^{s_0})$ solve \eqref{eq:kirchhoff} in 
    the model case for some 
    $s_0 \geq 0$ and let 
    $0 \leq s \leq s_0$.
Then 
\begin{align*}
    \partial_t \int_{\xi_1, \xi_2} A E^s_{\xi_1\xi_2}&\vec{d\xi}  = 
    -\int_{\xi_1, \xi_2} A\abs{\xi_1}^{2+2s}\abs{\xi_2}^2 \abs{\hat u(\xi_1)}^2 
\Re(\hat u(\xi_2) \bar {\hat u}(\xi_2))\vec{d\xi} \\
&- \int_{\xi_1,\xi_2,\xi_3} \frac12 A^2 
(\abs{\xi_1}^{2s} - \abs{\xi_2}^{2s})\abs{\xi_1}^2\abs{\xi_2}^2\abs{\xi_3}^2
\abs{\hat u(\xi_1)}^2 \Re(\hat u(\xi_2) \bar {\hat u}'(\xi_2)) \abs{\hat u(\xi_3)}^2\vec{d\xi}. 
\end{align*}
\end{proposition}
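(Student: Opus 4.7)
My plan is to differentiate the three summands of $E^s_{\xi_1\xi_2}$ directly using the product rule and then substitute the equation on the Fourier side, namely $\hat u''(\xi)=-(1+A\mathcal{N})|\xi|^2\hat u(\xi)$ where $\mathcal{N}:=\int|\xi_3|^2|\hat u(\xi_3)|^2\,d\xi_3$, wherever a second time derivative appears. Concretely, I would record the three atomic identities
\begin{align*}
\partial_t|\hat u(\xi)|^2 &= 2\Re(\hat u(\xi)\bar{\hat u}'(\xi)),\\
\partial_t|\hat u'(\xi)|^2 &= -2(1+A\mathcal{N})|\xi|^2\Re(\hat u(\xi)\bar{\hat u}'(\xi)),\\
\partial_t\Re(\hat u(\xi)\bar{\hat u}'(\xi)) &= |\hat u'(\xi)|^2-(1+A\mathcal{N})|\xi|^2|\hat u(\xi)|^2,
\end{align*}
apply them to the appropriate factor in each of the three summands, integrate in $(\xi_1,\xi_2)$, and exploit the manifest symmetry of $a_{\xi_1\xi_2}$, $b_{\xi_1\xi_2}$ and $c_{\xi_1\xi_2}$ under $\xi_1\leftrightarrow\xi_2$ to symmetrize each resulting integrand (e.g.\ $\int 2a_{\xi_1\xi_2}\Re(\hat u(\xi_1)\bar{\hat u}'(\xi_1))|\hat u(\xi_2)|^2\,d\xi_1 d\xi_2$ and $\int 2a_{\xi_1\xi_2}|\hat u(\xi_1)|^2\Re(\hat u(\xi_2)\bar{\hat u}'(\xi_2))\,d\xi_1 d\xi_2$ coincide after swapping dummy variables).

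Once this is done the terms collect into two algebraic types. The first consists of $|\hat u'(\xi_i)|^2\Re(\hat u(\xi_j)\bar{\hat u}'(\xi_j))$ contributions; after symmetrizing, their total coefficient reduces to $2(b_{\xi_1\xi_2}+c_{\xi_1\xi_2})$ and therefore vanishes identically by the defining relation $c=-b$. This is the structural reason for that choice: it annihilates the $|\hat u'|^2$ pieces, which the standard energy cannot absorb. The second type consists of $|\hat u(\xi_i)|^2\Re(\hat u(\xi_j)\bar{\hat u}'(\xi_j))$ contributions, and after symmetrization their total coefficient simplifies to $4a_{\xi_1\xi_2}+2b_{\xi_1\xi_2}(1+A\mathcal{N})(|\xi_1|^2-|\xi_2|^2)$.

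The second key algebraic observation is that the factor $|\xi_1|^2-|\xi_2|^2$ cancels precisely the denominator in $b_{\xi_1\xi_2}$, yielding $2b_{\xi_1\xi_2}(|\xi_1|^2-|\xi_2|^2)=-\tfrac12|\xi_1|^2|\xi_2|^2(|\xi_1|^{2s}-|\xi_2|^{2s})$. Adding this to $4a_{\xi_1\xi_2}=-\tfrac12|\xi_1|^2|\xi_2|^2(|\xi_1|^{2s}+|\xi_2|^{2s})$, the non-$\mathcal{N}$ part telescopes to $-|\xi_1|^{2+2s}|\xi_2|^2$, while the $A\mathcal{N}$ part leaves exactly $-\tfrac12 A\mathcal{N}|\xi_1|^2|\xi_2|^2(|\xi_1|^{2s}-|\xi_2|^{2s})$. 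Multiplying by $A$ and unfolding $\mathcal{N}$ as $\int|\xi_3|^2|\hat u(\xi_3)|^2\,d\xi_3$ to introduce the third frequency variable recovers the two terms on the right-hand side of the proposition.

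The main obstacle is purely bookkeeping: carefully tracking which summand contributes which $\hat u$- versus $\hat u'$-factor after applying the atomic identities, and verifying that the specific numerical constants $-\tfrac18$ in $a$ and $-\tfrac14$ in $b$ are the unique ones that make the two cancellations above assemble into $-|\xi_1|^{2+2s}|\xi_2|^2$, leaving only the expected cubic $A^2\mathcal{N}$-type remainder. No analytic subtlety is anticipated, as all Fourier integrals converge absolutely under the standing regularity assumption $s_0\geq s\geq 0$ on $u$.
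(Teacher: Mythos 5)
Your proposal is correct and follows essentially the same approach as the paper: differentiate each of the three summands using the product rule and the equation $\hat u''(\xi)=-(1+A\mathcal{N})|\xi|^2\hat u(\xi)$, integrate and exploit the $\xi_1\leftrightarrow\xi_2$ symmetry of the coefficients, then observe that the $|\hat u'|^2$ contributions cancel via $c=-b$ and the remaining terms telescope as you describe. The paper writes these algebraic steps out more explicitly, but the underlying computation and the structural reasons for the choice of $a$, $b$, $c$ are identical to yours.
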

The proof is a straightforward computation and can be found in Subsection \ref{app:coeff} of the appendix. Of note, however, is the heavy use of the symmetry coming from 
integrating over all $\xi_1, \xi_2$:
$$\int_{\xi_1, \xi_2} F(\xi_1, \xi_2) \vec{d\xi} = 
\int_{\xi_1, \xi_2} F(\xi_2, \xi_1) \vec{d\xi} = 
\frac12 \int_{\xi_1, \xi_2} F(\xi_1, \xi_2) + F(\xi_2, \xi_1)\vec{d\xi}. $$
In the statement of the proposition, we separate the constant $A$ from our energy densities.
This will be useful when we extend to arbitrary $C^2$ functions $N$ in Section \ref{gen},
where $A$ will no longer be a constant.

In order for our modified energy to control the $\dot H_x^{s+1} \times \dot H_x^s$ norm of $u$, 
it is important for the correction to be dominated by the unmodified energy. We will only be able to 
get this control by assuming that $u$ is small in the $\dot H_x^1 \times L_x^2$ norm, which 
justifies this assumption in Theorem \ref{theorem:main}. To show that this is true
we need control over the coefficients from 
Definition \ref{def:coefficients}. 

Control of the nontrivial coefficient $b_{\xi_1\xi_2}$ 
is given by the following algebraic lemma whose proof we leave to Subsection \ref{app:kernel} of the appendix.
\begin{lemma}
    \label{lem:kernel}
    When $\abs{\xi_1} \leq \abs{\xi_2}$ and $s \geq 0$ we have
    $$\abs{\frac{\abs{\xi_1}^{2s} - \abs{\xi_2}^{2s}}{\abs{\xi_1}^2 - \abs{\xi_2}^2}}
    \leq (1+s) \frac{\abs{\xi_2}^{2s}}{\abs{\xi_2}^2}.$$
    When $\abs{\xi_1} \leq \abs{\xi_2}$ and $s \leq 0$ we have
    $$\abs{\frac{\abs{\xi_1}^{2s} - \abs{\xi_2}^{2s}}{\abs{\xi_1}^2 - \abs{\xi_2}^2}}
    \leq (1+\abs{s}) \frac{\abs{\xi_1}^{2s}}{\abs{\xi_2}^2}.$$
\end{lemma}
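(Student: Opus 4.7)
The plan is to reduce the lemma to a one-variable inequality. Let $a = |\xi_1|$ and $b = |\xi_2|$ with $0 \leq a \leq b$, and factor out $b^{2s}/b^2$:
\[
\left|\frac{a^{2s}-b^{2s}}{a^{2}-b^{2}}\right| = \frac{b^{2s}}{b^{2}} \cdot \frac{|1 - r^{2s}|}{|1 - r^{2}|}, \qquad r := a/b \in [0,1].
\]
Setting $t = r^{2} \in [0,1]$, the inequality for $s \geq 0$ reduces to the claim
\[
\frac{1 - t^{s}}{1 - t} \leq 1 + s, \qquad t \in [0,1), \ s \geq 0,
\]
and the homogeneity factor $b^{2s-2}$ matches the right-hand side of the lemma exactly.

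For the reduced inequality I would split at $s = 1$. When $0 \leq s \leq 1$, for $t \in (0,1)$ one has $t^{s} \geq t$ since raising a number in $(0,1)$ to a smaller positive power makes it larger; hence $1 - t^{s} \leq 1 - t$, so the ratio is bounded by $1 \leq 1 + s$. When $s \geq 1$, I would apply the mean value theorem to the function $x \mapsto x^{s}$ on $[t,1]$ to obtain
\[
1 - t^{s} = s\, c^{s-1}(1 - t) \quad \text{for some } c \in (t,1),
\]
and since $c < 1$ and $s - 1 \geq 0$ give $c^{s-1} \leq 1$, the ratio is bounded by $s \leq 1 + s$. Combining the two cases yields the $s \geq 0$ bound.

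For $s \leq 0$, I would reduce to the case already handled by writing $\sigma = -s \geq 0$ and factoring:
\[
a^{-2\sigma} - b^{-2\sigma} = \frac{b^{2\sigma} - a^{2\sigma}}{(ab)^{2\sigma}}.
\]
Dividing by $b^{2} - a^{2}$ and applying the bound just proved with exponent $\sigma$ gives
\[
\left|\frac{a^{2s}-b^{2s}}{a^{2}-b^{2}}\right| = \frac{1}{(ab)^{2\sigma}} \cdot \frac{b^{2\sigma}-a^{2\sigma}}{b^{2}-a^{2}} \leq (1+\sigma)\frac{b^{2\sigma-2}}{(ab)^{2\sigma}} = (1+|s|)\frac{a^{2s}}{b^{2}},
\]
which is exactly the claimed bound with $|\xi_1|^{2s}$ in the numerator (reflecting that the smaller frequency now dominates since $s<0$).

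The argument is entirely elementary; the only minor subtlety is choosing the right normalization so that the homogeneous factor matches the statement (it is $|\xi_2|^{2s}/|\xi_2|^2$ for $s \geq 0$ but $|\xi_1|^{2s}/|\xi_2|^2$ for $s \leq 0$), and this is precisely what the factorization $a^{-2\sigma} - b^{-2\sigma} = (b^{2\sigma}-a^{2\sigma})/(ab)^{2\sigma}$ takes care of. No genuine analytic obstacle is expected.
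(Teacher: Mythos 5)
Your proof is correct and follows the same overall strategy as the paper: factor out $|\xi_2|^{2s-2}$, set $t = (|\xi_1|/|\xi_2|)^2$, reduce to the one-variable inequality $\frac{1-t^s}{1-t}\leq 1+s$ on $[0,1)$, split at $s=1$, and obtain the $s\leq 0$ bound by taking reciprocals. The only genuine divergence is in the $s\geq 1$ sub-case: the paper first establishes $\frac{1-t^s}{1-t}=1+t+\cdots+t^{s-1}\leq s$ for integer $s$ and then interpolates to non-integer $s$ by monotonicity of $s\mapsto\frac{1-t^s}{1-t}$, whereas you apply the mean value theorem directly to $x\mapsto x^s$ on $[t,1]$ to get $1-t^s=s\,c^{s-1}(1-t)\leq s(1-t)$. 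Your route is shorter and cleaner: it handles all $s\geq 1$ in one stroke, avoids the integer-then-interpolate detour, and makes the bound $\leq s$ (rather than $\leq 1+s$) transparent. Both arguments are elementary, but yours is the more natural one to reach for; the paper's version is not wrong, just more circuitous. Your handling of $s\leq 0$ via the algebraic identity $a^{-2\sigma}-b^{-2\sigma}=(b^{2\sigma}-a^{2\sigma})/(ab)^{2\sigma}$ is equivalent to the paper's division by $x^{2s}$, and you correctly track where the extra factor of $a^{-2\sigma}=|\xi_1|^{2s}$ lands.
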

We note that there is no obstruction to proving Theorem \ref{theorem:main} for $s < 0$ using 
the second part of the above lemma. The reason one might desire Theorem \ref{theorem:main} for 
$s < 0$ is to form a contraction in a lower regularity space which is enough for 
Hadamard well-posedness. 
See \cite{ifrim2022localwellposednessquasilinearproblems} for details.
However, in this work we do not do this analysis for reasons described in 
Section \ref{sec:lin}.

Using Lemma \ref{lem:kernel} we get the following control over our energy correction.
\begin{proposition} 
\label{prop:quad_bounds}
Let $u \in C_t^0([0,T], H_x^{s_0+1}) \cap C_t^1([0,T], H_x^{s_0})$ solve \eqref{eq:kirchhoff} in the model 
case with $s_0 \geq 0$. 
Then
$$\abs{\int_{\xi_1,\xi_2} E^s_{\xi_1\xi_2} \vec{d\xi} } \lesssim (1+s)
\norm{(u,u')}^2_{\dot H_x^{s+1}\times \dot H_x^s}
\norm{(u,u')}^2_{\dot H_x^1 \times L_x^2}$$ 
where $E^s_{\xi_1\xi_2}$ is defined in Definition \ref{def:coefficients}.
\end{proposition}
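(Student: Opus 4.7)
I would bound each of the three terms in $E^s_{\xi_1\xi_2}$ separately and then combine. The $a$-term is easiest: since $a_{\xi_1\xi_2}$ is symmetric in $\xi_1 \leftrightarrow \xi_2$ and so is $\abs{\hat u(\xi_1)}^2\abs{\hat u(\xi_2)}^2$, under the double integral the factor $\abs{\xi_1}^{2s} + \abs{\xi_2}^{2s}$ can be replaced by $2\abs{\xi_2}^{2s}$, so the integral factors cleanly as $\tfrac14 \norm{u}_{\dot H^1_x}^2 \norm{u}_{\dot H^{s+1}_x}^2$, which is already of the stated form.

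For the $c$-term I would first apply the pointwise bound $\abs{\Re(\hat u\,\bar{\hat u}')} \leq \abs{\hat u}\abs{\hat u'}$, and then use that $c_{\xi_1\xi_2} = -b_{\xi_1\xi_2}$ is symmetric in $\xi_1 \leftrightarrow \xi_2$ together with Cauchy--Schwarz in the nonnegative measure $\abs{b_{\xi_1\xi_2}}\vec{d\xi}$ to reduce to the $b$-integral:
$$\int \abs{b_{\xi_1\xi_2}}\, \abs{\hat u(\xi_1)}\abs{\hat u'(\xi_1)}\abs{\hat u(\xi_2)}\abs{\hat u'(\xi_2)}\, \vec{d\xi} \leq \int \abs{b_{\xi_1\xi_2}}\, \abs{\hat u(\xi_1)}^2 \abs{\hat u'(\xi_2)}^2\, \vec{d\xi}.$$
For the remaining $b$-integral, Lemma \ref{lem:kernel} gives $\abs{b_{\xi_1\xi_2}} \leq \tfrac{1+s}{4}(\xi_1\wedge\xi_2)^2 \max(\abs{\xi_1},\abs{\xi_2})^{2s}$, and I would split the integration into the two regions $\abs{\xi_1}\leq\abs{\xi_2}$ and $\abs{\xi_2}\leq\abs{\xi_1}$. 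In the first region, the kernel is $\abs{\xi_1}^2\abs{\xi_2}^{2s}$ and the integral factors as $\norm{u}_{\dot H^1_x}^2\norm{u'}_{\dot H^s_x}^2$. In the second region, the kernel is $\abs{\xi_2}^2\abs{\xi_1}^{2s}$; using $\abs{\xi_2}^2 \leq \abs{\xi_1}^2$ I would replace it by $\abs{\xi_1}^{2+2s}$, after which the integral factors as $\norm{u}_{\dot H^{s+1}_x}^2\norm{u'}_{L^2_x}^2$. Each of the four products obtained is dominated by $\norm{(u,u')}^2_{\dot H^{s+1}_x\times\dot H^s_x}\norm{(u,u')}^2_{\dot H^1_x\times L^2_x}$, with the $(1+s)$ factor tracked through Lemma \ref{lem:kernel}.

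The main subtlety is the $b$ (and hence $c$) estimate in the region $\abs{\xi_2}\leq\abs{\xi_1}$: a direct factorization of the kernel $\abs{\xi_1}^{2s}\abs{\xi_2}^2$ against $\abs{\hat u(\xi_1)}^2\abs{\hat u'(\xi_2)}^2$ lands in $\norm{u}_{\dot H^s_x}^2\norm{u'}_{\dot H^1_x}^2$, which has a derivative loss placing $u'$ at regularity $\dot H^1_x$ rather than the allowed $L^2_x$. The key observation is that Lemma \ref{lem:kernel} places the $(\xi_1\wedge\xi_2)^2$ weight on the smaller frequency, which here is $\xi_2$; trading this low-frequency weight $\abs{\xi_2}^2$ for the available $\abs{\xi_1}^2$ transfers the missing derivative from $u'$ onto $u$ and yields the correct split of regularities between the two norms in the claim.
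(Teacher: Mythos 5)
Your proof is correct and follows essentially the same approach as the paper: Fubini for the $a$-term, Lemma \ref{lem:kernel} with the low-frequency trade $\abs{\xi_2}^2 \leq \abs{\xi_1}^2$ for the $b$-term, and symmetry for the $c$-term. The one stylistic difference is in the $c$-term: you reduce it to the $b$-integral via Cauchy--Schwarz in the measure $\abs{b_{\xi_1\xi_2}}\vec{d\xi}$ (using the symmetry of $b$ to identify the two factors), whereas the paper instead splits the kernel as $\abs{\xi_1}\abs{\xi_2}^{2s+1}$ (valid on $\abs{\xi_1}\leq\abs{\xi_2}$) and applies Cauchy--Schwarz to each frequency variable directly; your reduction is a clean observation that avoids a separate kernel argument for $c$, but both rest on the same symmetry and the same Lemma \ref{lem:kernel}.
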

\begin{proof}
For the $a$ term we simply apply Fubini's theorem
$$\abs{\int_{\xi_1, \xi_2} a_{\xi_1\xi_2} \abs{\hat u(\xi_1)}^2 \abs{\hat u(\xi_2)}^2\vec{d\xi} }
\leq \frac14 \left(\int_{\xi_1}  \abs{\xi_1}^{2+2s} \abs{\hat u(\xi_1)}^2 d\xi_1\right)
\left(\int_{\xi_2}  \abs{\xi_2}^{2} \abs{\hat u(\xi_2)}^2 d\xi_2\right).$$
For the $b$ and $c$ terms, we need to analyze their kernel using Lemma \ref{lem:kernel}. We start 
with $b$.

When $\abs{\xi_1} \leq \abs{\xi_2}$ we see that
$$\abs{\xi_1}^2\abs{\xi_2}^2
\abs{\frac{\abs{\xi_1}^{2s} - \abs{\xi_2}^{2s}}{\abs{\xi_1}^2 - \abs{\xi_2}^2}}
\leq (1+s) \abs{\xi_1}^2\abs{\xi_2}^{2s}$$
which gives us the estimate
$$\abs{\int_{\xi_1 \leq \xi_2} b_{\xi_1\xi_2} \abs{\hat u(\xi_1)}^2 
\abs{\hat u'(\xi_2)}^2\vec{d\xi} }
\leq \frac14 (1+s) \left(\int_{\xi_1}  \abs{\xi_1}^{2} \abs{\hat u(\xi_1)}^2 d\xi_1\right)
\left(\int_{\xi_2}  \abs{\xi_2}^{2s} \abs{\hat u'(\xi_2)}^2 d\xi_2\right).$$
And when $\abs{\xi_2} \leq \abs{\xi_1}$ we see that
$$\abs{\xi_1}^2\abs{\xi_2}^2
\abs{\frac{\abs{\xi_1}^{2s} - \abs{\xi_2}^{2s}}{\abs{\xi_1}^2 - \abs{\xi_2}^2}}
\leq (1+s) \abs{\xi_1}^{2s}\abs{\xi_2}^{2} \leq (1+s) \abs{\xi_1}^{2s+2}$$
which gives us the estimate
$$\abs{\int_{\xi_1 \geq \xi_2} b_{\xi_1\xi_2} \abs{\hat u(\xi_1)}^2 
\abs{\hat u'(\xi_2)}^2\vec{d\xi} }
\leq \frac14 (1+s) \left(\int_{\xi_1}  \abs{\xi_1}^{2+2s} \abs{\hat u(\xi_1)}^2 d\xi_1\right)
\left(\int_{\xi_2}  \abs{\hat u'(\xi_2)}^2 d\xi_2\right).$$

For the $c$ term we can use symmetry to assume $\abs{\xi_1} \leq \abs{\xi_2}$. Then
$$\abs{\xi_1}^2\abs{\xi_2}^2
\abs{\frac{\abs{\xi_1}^{2s} - \abs{\xi_2}^{2s}}{\abs{\xi_1}^2 - \abs{\xi_2}^2}}
\leq (1+s) \abs{\xi_1}^{2}\abs{\xi_2}^{2s} 
\leq (1+s) \abs{\xi_1}^{1}\abs{\xi_2}^{2s+1}.$$
Then we get 
\begin{align*}
    \Bigg\vert\int_{\abs{\xi_1} \leq \abs{\xi_2}}&
c_{\xi_1\xi_2} \Re(\hat u(\xi_1) \bar {\hat u}'(\xi_1))
\Re( \hat u(\xi_2) \bar {\hat u}'(\xi_2)) \vec{d\xi} \Bigg\vert\\
&\leq \frac14 (1+s)
\int_{\xi_1}
\abs{\xi_1}^1\abs{\hat u(\xi_1)} \abs{\hat u'(\xi_1)}d\xi_1
\int_{\xi_2}
\abs{\xi_2}^{s+1}\abs{\hat u(\xi_2)} \abs{\xi_2}^s \abs{\hat u'(\xi_2)}d\xi_2
\end{align*}
which is controlled by Cauchy-Schwarz.

\end{proof}

\subsection{Necessity of Higher Order Corrections}
\label{ssec:necessity}

Now that we know our modification is controlled, all that remains is to control the 
time derivative of modified energy as in Theorem \ref{theorem:main}. In the remainder 
of this subsection we will give a brief description of how we \emph{fail} to get control 
using our second order correction for certain unfavorable balances of frequencies,
and how this prompts us to look at higher order corrections.

First, let us look at the exact form the derivative takes.
\begin{align}
    \partial_t &\left(\int_{\xi_1} E^s_{\xi_1}d\xi_1 + \int_{\xi_1,\xi_2} A E^s_{\xi_1\xi_2}\vec{d\xi}\right)\nonumber\\
& \label{eq:first_order}
= - \int_{\xi_1,\xi_2,\xi_3} \frac12 A^2 
(\abs{\xi_1}^{2s} - \abs{\xi_2}^{2s})\abs{\xi_1}^2\abs{\xi_2}^2\abs{\xi_3}^2
\abs{\hat u(\xi_1)}^2 \Re(\hat u(\xi_2) \bar {\hat u}'(\xi_2)) \abs{\hat u(\xi_3)}^2 \vec{d\xi}
\end{align}

To see that there are some balances of frequencies where we \emph{do} get control, we consider 
the best possible case: when all frequencies are comparable.
Here \eqref{eq:first_order} is controlled by
$$\int_{\xi_1 \sim \xi_2 \sim \xi_3 \sim \xi}\abs{\xi}^{6+2s} 
\abs{\hat u(\xi_1)}^2 \abs{\hat u(\xi_2)\hat u'(\xi_2)}\abs{\hat u(\xi_3)}^2\vec{d\xi}.$$ 
We can try to split up the $\xi^{6+2s}$ factor among the factors of $u$ to put each 
$u$ in $\dot H_x^{1+s}$ and 
$u'$ in $\dot H_x^s$ (using Cauchy-Schwarz for the $\xi_2$ terms), which is optimal when $6+2s = 5(1+s) + s$ or $s = 1/4$. This is what 
justifies the space $\dot H_x^{5/4} \times \dot H_x^{1/4}$ in Theorem \ref{theorem:main}.

However, we can see that in balances where $\xi_2$ is larger than $\xi_3$ even the 
$\abs{\xi_1}^{2s}$ term in \eqref{eq:first_order} will give us trouble because 
we cannot control 
$$\int_{\xi_2}\abs{\xi_2}^2 \abs{\hat u(\xi_2)\hat u'(\xi_2)}d\xi_2$$
in $\dot H_x^{5/4} \times \dot H_x^{1/4}$ and have nowhere to move the extra factors of $\xi_2$.

Fortunately, the terms we cannot control can be written as a low frequency 
term (which can be treated like a constant) times the very same quadratic energy terms 
in \eqref{eq:init} which
we cancelled! We illustrate this explicitly when $\abs{\xi_3} \leq \abs{\xi_1}, \abs{\xi_2}.$

First, we eliminate the $\abs{\xi_2}^{2s}$ term by ``integrating by parts'' in time. In 
other words, we add to our second order energy correction the expression
$$-\int_{\abs{\xi_3} \leq \abs{\xi_1}, \abs{\xi_2}} \frac14 A^2 
\abs{\xi_2}^{2s}\abs{\xi_1}^2\abs{\xi_2}^2\abs{\xi_3}^2
\abs{\hat u(\xi_1)}^2 \abs{\hat u(\xi_2)}^2 \abs{\hat u(\xi_3)}^2\vec{d\xi}.$$
This changes \eqref{eq:first_order} by canceling the $\abs{\xi_2}^{2s}$ term 
when the time derivative lands on $\abs{\hat u(\xi_2)}^2$; it doubles contribution of 
$\abs{\xi_1}^{2s}$ when the time derivative lands on $\abs{\hat u(\xi_1)}^2$ (after 
relabelling $\xi_1 \leftrightarrow \xi_2$); and it adds a term we can control when 
the derivative lands on $\abs{\hat u(\xi_3)}^2$. We call this ``integrating by parts'' 
because we have moved the time derivative onto more favorable terms using the product rule.

The reason we said earlier that we can treat the 
low frequency factor $\abs{\hat u(\xi_3)}^2$ as a constant is \emph{because} we will be able 
to control 
terms where the time derivative lands on this factor! This will be a theme going forward, so 
keep this in mind.

Going back to our analysis, after integrating by parts we are left with 
$$- \int_{\xi_3\leq \xi_1,\xi_2} A^2 \left(\abs{\xi_3}^2 \abs{\hat u(\xi_3)}^2\right)
\abs{\xi_1}^{2s}\abs{\xi_1}^2\abs{\xi_2}^2
\abs{\hat u(\xi_1)}^2 \Re(\hat u(\xi_2) \bar {\hat u}'(\xi_2))\vec{d\xi} .
$$
Indeed, this is exactly \eqref{eq:init} but with a different coefficient:
$$\int_{\xi_1, \xi_2} -\left(A^2 \int_{\xi_3 \leq \xi_1, \xi_2}\abs{\xi_3}^2 
\abs{\hat u(\xi_3)}^2d\xi_3\right) E^s_{\xi_1 \xi_2} \vec{d\xi}. 
$$
Note that the coefficient is symmetric in $\xi_1$ and $\xi_2$ so all the manipulations 
we did in Proposition \ref{prop:coefficients} will carry through again, and when the derivative 
falls on $\hat u(\xi_3)$ we should still have control since $\xi_3$ is small compared 
to $\xi_1$ and $\xi_2$.

Of course, this correction will generate another term coming from the
nonlinearity that needs correction! 
Before we see how we can fold all of these corrections together, we note that 
we will need a similar higher order correction in the case where $\abs{\xi_1} \leq 
\abs{\xi_2}, \abs{\xi_3}$. In this situation, we can control the $\abs{\xi_1}^{2s}$ term by
integrating by parts when $\abs{\xi_3} \leq \abs{\xi_2}$. For the $\abs{\xi_2}^{2s}$ term, we add 
the following term to our modified energy:
$$\int_{\xi_2, \xi_3} \left(\frac12 A^2 \int_{\xi_1 \leq \xi_2, \xi_3}\abs{\xi_1}^2 
\abs{\hat u(\xi_1)}^2\right) E^s_{\xi_2 \xi_3}\vec{d\xi}. 
$$
Note that here $\xi_1$ is playing the role of the low frequency coefficient, which will 
slightly complicate things below.

To account for all these corrections simultaneously, we view the total corrections we need as 
forming a Taylor series in 
$$\int_{\abs{\xi_3} \leq \abs{\xi_1}, \abs{\xi_2}} \abs{\xi_3}^2 \abs{u(\xi_3)}^2 d\xi_3.$$
To calculate the function we need exactly, we assume that the corrections do indeed converge
to a function which we will denote 
$$F(\xi_1 \wedge \xi_2).$$ 
We further assume (and will verify 
in the next subsection) that when time derivatives land on $F$ we have control, which is 
again our theme because
$F$ depends on $u$ through only low frequencies.

We change our original correction to 
\begin{align*}
    \partial_t &\left(\int_{\xi_1} E^s_{\xi_1} + \int_{\xi_1,\xi_2} A 
    F(\xi_1 \wedge \xi_2)E^s_{\xi_1\xi_2}\vec{d\xi} \right)\\
&= \int_{\xi_1, \xi_2} A\abs{\xi_1}^{2+2s}\abs{\xi_2}^2 \abs{\hat u(\xi_1)}^2 
\Re(\hat u(\xi_2) \bar {\hat u}(\xi_2))\vec{d\xi} \\
&- \int_{\xi_1, \xi_2} AF(\xi_1 \wedge \xi_2)
\abs{\xi_1}^{2+2s}\abs{\xi_2}^2 \abs{\hat u(\xi_1)}^2 
\Re(\hat u(\xi_2) \bar {\hat u}(\xi_2))\vec{d\xi} \\
& - \int_{\xi_1,\xi_2,\xi_3} \frac12 A^2F(\xi_1 \wedge \xi_2)
(\abs{\xi_1}^{2s} - \abs{\xi_2}^{2s})\abs{\xi_1}^2\abs{\xi_2}^2\abs{\xi_3}^2
\abs{\hat u(\xi_1)}^2 \Re(\hat u(\xi_2) \bar {\hat u}'(\xi_2)) \abs{\hat u(\xi_3)}^2\vec{d\xi}\\
&+ (\text{terms where the derivative lands on $F$})
\end{align*}

For these to cancel (after controlling the favorable balances and integrating by parts)
we get an integral equality for $F$. What complicates things is we alter 
the input of $F$ when interchanging the symbols $\xi_1$ and $\xi_3$. Ultimately, the terms we 
cannot control combine into
\begin{align*}
    \int_{\xi_1, \xi_2} E^s_{\xi_1 \xi_2}\Bigg(
 &A - AF(\xi_1 \wedge \xi_2) -A^2F(\xi_1\wedge \xi_2)
\int_{\abs{\xi_3} \leq \xi_1 \wedge \xi_2} \abs{\xi_3}^2\abs{\hat u(\xi_3)}^2 d\xi_3\\
&- \frac12A^2 \int_{\abs{\xi_3} \leq \xi_1 \wedge \xi_2} F(\xi_3 \wedge \xi_2) 
\abs{\xi_3}^2\abs{\hat u(\xi_3)}^2d\xi_3
\Bigg) \vec{d\xi}.
\end{align*}
For this to cancel for all $\xi_1, \xi_2$ we need 
\begin{align*}
    0 = A &- AF(\xi_1 \wedge \xi_2) -A^2F(\xi_1\wedge \xi_2)
\int_{\abs{\xi_3} \leq \xi_1 \wedge \xi_2} \abs{\xi_3}^2\abs{\hat u(\xi_3)}^2 d\xi_3\\
          &- \frac12A^2 \int_{\abs{\xi_3} \leq \xi_1 \wedge \xi_2} F(\abs{\xi_3}) 
\abs{\xi_3}^2\abs{\hat u(\xi_3)}^2d\xi_3
\end{align*}
where we have replaced the minimum $\xi_3 \wedge \xi_2$ with $\abs{\xi_3}$ in the 
last term since $\abs{\xi_3} \leq \abs{\xi_2}$.

Since the size $\xi_1 \wedge \xi_2$ is a real number we can form an ODE for $F$ in 
terms of this 
parameter which we will call $r$. First, we see that $F(0) = 1$ because the 
integrals vanish when $r=0$. Then differentiating we get 
\begin{align*}
0= &-F'(r)\left(1+ A \int_{\abs{\xi_3} \leq r} \abs{\xi_3}^2\abs{\hat u(\xi_3)}^2d\xi_3\right)
- A F(r) \int_{\abs{\xi_3} = r} \abs{\xi_3}^2\abs{\hat u(\xi_3)}^2d\xi_3\\
   &- \frac12A F(r)\int_{\abs{\xi_3} = r} 
\abs{\xi_3}^2\abs{\hat u(\xi_3)}^2d\xi_3
\end{align*}
so that 
$$\frac{F'(r)}{F(r)} = \frac{- \frac32 A \int_{\abs{\xi_3} = s} \abs{\xi_3}^2
\abs{\hat u(\xi_3)}^2d\xi_3}{1 + A\int_{\abs{\xi_3} \leq r} \abs{\xi_3}^2 \abs{\hat u(\xi_3)}^2
d\xi_3}.$$
Noticing that the top is a multiple of the derivative of the bottom we see that 
$$F(r) = \left(1 + A\int_{\abs{\xi_3} \leq r} \abs{\xi_3}^2 \abs{\hat u(\xi_3)}^2d\xi_3\right)^{-3/2}.$$

\subsection{Analysis of the Correction} \label{analysis}
First we will need some basic facts about this higher order correction function $F$.
\begin{definition}\label{def:f}
Let $u \in C_t^0([0,T], H_x^{s_0+1}) \cap C_t^1([0,T], H_x^{s_0})$ solve \eqref{eq:kirchhoff} in the model 
case 
with $s_0 \geq 0$. We define $F(r)$ to be the unique solution to the integral equation 
$$F(r) = 1 - A F(r) \int_{\abs{\xi} \leq r} \abs{\xi}^2 \abs{\hat u(\xi)}^2 d\xi
- \frac12 A\int_{\abs{\xi} \leq r} F(\abs{\xi})\abs{\xi}^2 \abs{\hat u(\xi)}^2 d\xi,$$
which has the explicit form
$$F(r) = \left(1 + A\int_{\abs{\xi} \leq r} \abs{\xi}^2 \abs{\hat u(\xi)}^2d\xi\right)^{-3/2}.$$
\end{definition}
The essential analytic properties of $F$ we need are given by the following lemma.
\begin{lemma}
    \label{lem:f}
Let $u \in C_t^0([0,T], H_x^{s_0+1}) \cap C_t^1([0,T], H_x^{s_0})$ solve \eqref{eq:kirchhoff} in the model 
case  with $s_0 \geq 0$. Suppose that either $A \geq 0$ or for some time $t \in [0,T]$, we have
$$\norm{u(t)}_{\dot H^1} \leq \frac1{\sqrt {-2A}}.$$
Then, for $F$ as defined in Definition \ref{def:f} we have at time $t$ that 
$$F(r) \lesssim 1,$$
and
$$\abs{\partial_t F(r)} \lesssim \abs{A}\abs{\int_{\abs{\xi} \leq r} \abs{\xi}^2 \Re(\hat u(\xi)
\bar {\hat u}'(\xi))d\xi}$$
\end{lemma}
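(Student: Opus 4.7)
The plan is to work directly from the explicit formula
$$F(r) = \left(1 + A\int_{\abs{\xi} \leq r} \abs{\xi}^2 \abs{\hat u(\xi)}^2\,d\xi\right)^{-3/2}$$
recorded in Definition \ref{def:f}, since both assertions reduce to elementary bounds on the quantity inside the parentheses. Set $G(r) = 1 + A\int_{\abs{\xi} \leq r} \abs{\xi}^2 \abs{\hat u(\xi)}^2\,d\xi$; the whole proof comes down to showing $G(r)$ stays uniformly bounded away from zero under either hypothesis.

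First I would establish the pointwise bound $F(r) \lesssim 1$ by a case split on the sign of $A$. If $A \geq 0$, then $G(r) \geq 1$ for every $r$ and so $F(r) = G(r)^{-3/2} \leq 1$ trivially. If $A < 0$, the hypothesis $\norm{u(t)}_{\dot H^1} \leq 1/\sqrt{-2A}$ rearranges to $-A\norm{u}_{\dot H^1}^2 \leq 1/2$, and hence
$$A\int_{\abs{\xi} \leq r} \abs{\xi}^2 \abs{\hat u}^2\,d\xi \geq A \norm{u(t)}_{\dot H^1}^2 \geq -\tfrac{1}{2},$$
giving $G(r) \geq 1/2$ uniformly in $r$. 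In both cases $G(r)^{-3/2}$ and $G(r)^{-5/2}$ are both $\lesssim 1$, which is what will feed the second estimate.

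For the time derivative, I would differentiate the explicit formula under the integral sign. The chain rule, together with the identity $\partial_t \abs{\hat u(\xi, t)}^2 = 2\Re(\hat u(\xi)\bar{\hat u}'(\xi))$, gives
$$\partial_t F(r) = -\tfrac{3}{2} G(r)^{-5/2} \cdot A \cdot \partial_t \!\!\int_{\abs{\xi} \leq r} \abs{\xi}^2 \abs{\hat u(\xi,t)}^2 d\xi = -3 A\, G(r)^{-5/2} \int_{\abs{\xi} \leq r} \abs{\xi}^2 \Re(\hat u \bar{\hat u}')\,d\xi.$$
Since $G(r)^{-5/2} \lesssim 1$ from the previous step, taking absolute values yields the claimed bound
$$\abs{\partial_t F(r)} \lesssim \abs{A}\,\Bigl|\int_{\abs{\xi} \leq r} \abs{\xi}^2 \Re(\hat u \bar{\hat u}')\,d\xi\Bigr|.$$

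The only technical point worth flagging is the justification for differentiating under the integral sign and for ignoring a possible boundary contribution coming from the moving domain $\{\abs{\xi} \leq r\}$; however, since we differentiate in $t$ with $r$ fixed, no boundary term arises, and the regularity $u \in C_t^0 H_x^{s_0+1} \cap C_t^1 H_x^{s_0}$ with $s_0 \geq 0$ makes both $\abs{\xi}^2\abs{\hat u}^2$ and its $t$-derivative integrable in $\xi$ on the ball of radius $r$, so dominated convergence supplies the interchange. Beyond this brief verification, the proof is essentially a one-line computation from the closed form of $F$, and I expect no serious obstacle.
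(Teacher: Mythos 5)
Your proof is correct and follows essentially the same route as the paper: bound $G(r) = 1 + A\int_{|\xi|\le r}|\xi|^2|\hat u|^2\,d\xi$ away from zero using the sign of $A$ or the smallness hypothesis, then differentiate the closed form $F = G^{-3/2}$ in $t$ to get the same expression the paper records. You are slightly more explicit about the $A<0$ case and the interchange of $\partial_t$ with the $\xi$-integral, but the substance is identical.
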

We leave the details to Subsection \ref{app:f} of the appendix.

Now we are ready to construct the modified energy for \eqref{eq:kirchhoff} in the model case.
\begin{definition}\label{def:model_energy}
Let $u \in C_t^0([0,T], H_x^{s_0+1}) \cap C_t^1([0,T], H_x^{s_0})$ solve \eqref{eq:kirchhoff} in the model case
with $s_0 \geq 0$. We define the \emph{modified energy} at regularity $s$ for \eqref{eq:kirchhoff}
in the model case to be 
$$E^s = \int_{\xi_1} E^s_{\xi_1}d\xi_1 + \int_{\xi_1,\xi_2} A 
    F(\xi_1 \wedge \xi_2)E^s_{\xi_1\xi_2}\vec{d\xi}  + E_n^s$$
where the energy densities $E^s_{\xi_1}$ and $E^s_{\xi_1\xi_2}$ are defined in Definitions \ref{def:unmodified} and \ref{def:coefficients} respectively; the higher order function $F$ is 
defined in Definition \ref{def:f}; and we collect the terms taking advantage of integrating by parts 
in 
\begin{align*}
    E^s_n &= -\int_{\abs{\xi_3} \leq \abs{\xi_1}, \abs{\xi_2}} \frac14 A^2 F(\xi_1 \wedge \xi_2)
\abs{\xi_2}^{2s}\abs{\xi_1}^2\abs{\xi_2}^2\abs{\xi_3}^2
\abs{\hat u(\xi_1)}^2 \abs{\hat u(\xi_2)}^2 \abs{\hat u(\xi_3)}^2\vec{d\xi} \\
          &- \int_{\abs{\xi_1} \leq \xi_2\wedge \xi_3} \frac14 A^2 F(\abs{\xi_1})
\abs{\xi_1}^2 \abs{\xi_2}^{2+2s}\abs{\xi_3}^2 \abs{\hat u(\xi_1)}^2\abs{\hat u(\xi_2)}^2
\abs{\hat u(\xi_3)}^2\vec{d\xi} \\
    &+ \int_{\abs{\xi_1} \leq \abs{\xi_3}\leq \abs{\xi_2}} \frac14 A^2 
    F(\abs{\xi_1})\abs{\xi_1}^{2s}\abs{\xi_1}^2\abs{\xi_2}^2\abs{\xi_3}^2
\abs{\hat u(\xi_1)}^2 \abs{\hat u(\xi_2)}^2 \abs{\hat u(\xi_3)}^2\vec{d\xi} .
\end{align*}
We use the notation $E^s_n$ to reflect the fact that these terms represent usual 
normal form integrals.
\end{definition}
Then, we have the following modified energy bound.
\begin{proposition} \label{prop:simple_energy}
Let $u \in C_t^0([0,T], H_x^{s_0+1}) \cap C_t^1([0,T], H_x^{s_0})$ solve \eqref{eq:kirchhoff} in 
the model case with 
$s_0 \geq 1/4$ and let 
$E^s$ be the modified energy associated to $u$ defined in Definition \ref{def:model_energy}. Finally, suppose $u$ satisfies the conditions of Lemma \ref{lem:f} at $t \in [0,T].$
Then for all $0 \leq s \leq s_0$ we have the energy estimate
\begin{align*}
\abs{\partial_t E^s}
    & \lesssim_{s_0} \abs{A}^2\norm{(u,u')}^2_{\dot H_x^{1+s} \times \dot H_x^s}
    \norm{(u,u')}^4_{\dot H_x^{5/4} \times \dot H_x^{1/4}}\\ 
    &+  \abs{A}^3\norm{(u,u')}^2_{\dot H_x^{1+s} \times \dot H_x^s}
    \norm{(u,u')}^4_{\dot H_x^{5/4} \times \dot H_x^{1/4}}\norm{(u,u')}^2_{\dot H_x^{1} \times L_x^2}.
\end{align*}
\end{proposition}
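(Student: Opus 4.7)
My plan is to differentiate each of the three summands of $E^s$ in turn, and organize the output so that the cancellations designed into Definitions \ref{def:coefficients}, \ref{def:f}, and \ref{def:model_energy} (following the heuristic of Subsection \ref{ssec:necessity}) reduce the computation to a sum of estimable remainders. The first summand $\int E^s_{\xi_1}\,d\xi_1$ contributes \eqref{eq:init} directly. For the second summand $\int A F(\xi_1\wedge\xi_2) E^s_{\xi_1\xi_2}\,\vec{d\xi}$, the product rule gives a derivative on $E^s_{\xi_1\xi_2}$ --- which, by Proposition \ref{prop:coefficients} (applicable because $F(\xi_1\wedge\xi_2)$ is symmetric in $\xi_1,\xi_2$), produces an $-AF$-weighted copy of \eqref{eq:init} together with the $A^2F$-weighted quartic term from that proposition --- plus a derivative on $F(\xi_1\wedge\xi_2)$, which I set aside for later.

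Combined, the first two summands leave a residue $A(1-F(\xi_1\wedge\xi_2))$ multiplying the cubic integrand in \eqref{eq:init}, plus the $A^2 F$-weighted quartic term. The differentiation of $E^s_n$ is designed to handle both: when $\partial_t$ in $\partial_t E^s_n$ hits one of the high-frequency factors $\abs{\hat u(\xi_1)}^2$ or $\abs{\hat u(\xi_2)}^2$, the three pieces of $E^s_n$ supply precisely the integration-by-parts counterterms for the two unfavorable balances $\abs{\xi_3}\leq\abs{\xi_1}\wedge\abs{\xi_2}$ and $\abs{\xi_1}\leq\abs{\xi_2}\wedge\abs{\xi_3}$ of the quartic term, and for the bookkeeping regime $\abs{\xi_1}\leq\abs{\xi_3}\leq\abs{\xi_2}$ used to reconcile the $F(\abs{\xi_1})$-arguments with $F(\xi_1\wedge\xi_2)$. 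After relabelling and invoking the integral equation for $F$ in Definition \ref{def:f}, these contributions cancel both the residue $A(1-F)$ and the unfavorable parts of the $A^2F$-weighted quartic term exactly.

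What survives splits into two families. The favorable high-frequency integrals, schematically
\[
\int_{\abs{\xi_j}\ \text{comparable}}\abs{\xi}^{6+2s}\abs{\hat u}^4 \abs{\hat u \hat u'}\,\vec{d\xi},
\]
together with analogous favorable pieces of $\partial_t E^s_n$, distribute by H\"older and Cauchy--Schwarz using the identity $6+2s = (1+s) + s + 4\cdot\tfrac{5}{4}$ (with interpolation against $s_0$ for $s\neq 1/4$ in the permitted range $0\leq s\leq s_0$) to give $\norm{(u,u')}^2_{\dot H^{1+s}_x\times\dot H^s_x}\norm{(u,u')}^4_{\dot H^{5/4}_x\times\dot H^{1/4}_x}$, yielding the $\abs{A}^2$ piece of the stated estimate. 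The remaining low-frequency-derivative terms --- $\partial_t F(\xi_1\wedge\xi_2)$ controlled by the second bound in Lemma \ref{lem:f}, and $\partial_t$ falling on the low-frequency factor $\abs{\hat u(\xi_3)}^2$ in $E^s_n$ --- each carry one extra factor of $\abs{A}$ and one extra quadratic factor of $\norm{(u,u')}^2_{\dot H^1_x\times L^2_x}$, yielding the $\abs{A}^3$ piece. The first bound of Lemma \ref{lem:f} is used throughout to keep $F\lesssim 1$.

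The main obstacle is the second step: carefully matching each term produced by $\partial_t E^s_n$ and by the product-rule expansion of $\partial_t (AF E^s_{\xi_1\xi_2})$ to its cancellation partner, and checking that the asymmetric $F(\abs{\xi_1})$-arguments in the second and third pieces of $E^s_n$ align with $F(\xi_1\wedge\xi_2)$ after relabelling, so that the net residue reduces exactly to the integral equation for $F$ from Definition \ref{def:f}. Once this algebraic bookkeeping is verified, the remaining work is a routine case analysis of frequency balances using Lemmas \ref{lem:kernel} and \ref{lem:f}.
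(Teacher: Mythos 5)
Your overall plan mirrors the paper's proof structure: differentiate the three summands of $E^s$, use Proposition \ref{prop:coefficients} together with the integral identity for $F$ in Definition \ref{def:f} to cancel the cubic residue, let the high-frequency parts of $\partial_t E^s_n$ absorb the unfavorable balances of the quartic remainder, and then estimate what survives by distributing derivative powers. That architecture is correct and is what the paper does.

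However, your accounting of which remainder terms land in the $\abs{A}^2$ bucket versus the $\abs{A}^3$ bucket is wrong in a way that, if followed literally, would make the last step fail. Only the terms in which $\partial_t$ hits $F$ \emph{inside} $E^s_n$ involve a fourth frequency: $E^s_n$ already carries three factors of $\hat u$ and a coefficient $A^2$, and $\partial_t F$ introduces one more factor of $\abs{A}$ plus one more frequency via the second bound in Lemma \ref{lem:f}, producing the extra $\abs{A}\cdot\norm{(u,u')}^2_{\dot H^1_x\times L^2_x}$ that defines the $\abs{A}^3$ piece. By contrast, the terms where $\partial_t$ lands on the low-frequency factor $\abs{\hat u(\xi_3)}^2$ in $E^s_n$ (the expressions \eqref{eq:en1}--\eqref{eq:en4}) still carry only $A^2$ and three frequencies, and likewise the term where $\partial_t$ hits $F(\xi_1\wedge\xi_2)$ in the \emph{second} summand $\int A F(\xi_1\wedge\xi_2)E^s_{\xi_1\xi_2}\,\vec{d\xi}$ gives $A\cdot\partial_t F \sim A^2$ with three frequencies. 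Both families are cubic integrals and are controlled by the $\abs{A}^2$ piece alone, exactly as the paper estimates them. Asserting that they are bounded by $\abs{A}^3\cdots\norm{(u,u')}^2_{\dot H^1_x\times L^2_x}$ is a strictly stronger and false claim: a three-frequency integral cannot produce an extra $\norm{(u,u')}^2_{\dot H^1_x\times L^2_x}$ factor. Minor additional note: no interpolation against $s_0$ is needed to distribute the kernel powers for $s\neq 1/4$; in each favorable balance the ordering of the frequencies lets one shift powers directly (e.g. $\abs{\xi_2}^2\abs{\xi_3}^2\leq\abs{\xi_2}^{3/2}\abs{\xi_3}^{5/2}$ when $\abs{\xi_2}\leq\abs{\xi_3}$), which is how the paper handles all $0\leq s\leq s_0$ uniformly.
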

The proof of this proposition follows largely from the discussion in Section 
\ref{ssec:necessity}, Proposition \ref{prop:coefficients} and Lemma \ref{lem:f}.
\begin{proof}
We handle the derivatives one term at a time. We previously calculated that
$$\partial_t \int_{\xi_1} E^s_{\xi_1}d\xi_1 = \int_{\xi_1, \xi_2} A 
\abs{\xi_1}^{2+2s}\abs{\xi_2}^2 \abs{\hat u(\xi_1)}^2 
\Re(\hat u(\xi_2) \bar {\hat u}'(\xi_2))\vec{d\xi}.$$
Proposition \ref{prop:coefficients} as well as the identity for $F$ in Lemma \ref{lem:f} 
tell us that this is canceled by $\partial_t \int_{\xi_1,\xi_2} A 
F(\xi_1 \wedge \xi_2)E^s_{\xi_1\xi_2}\vec{d\xi} $ with the cost of the following remainders.
We get from Proposition \ref{prop:coefficients}:
\begin{equation}
\label{eq:error}
- \int_{\xi_1,\xi_2,\xi_3} \frac12 A^2F(\xi_1 \wedge \xi_2) 
(\abs{\xi_1}^{2s} - \abs{\xi_2}^{2s})\abs{\xi_1}^2\abs{\xi_2}^2\abs{\xi_3}^2
\abs{\hat u(\xi_1)}^2 \Re(\hat u(\xi_2) \bar {\hat u}'(\xi_2)) \abs{\hat u(\xi_3)}^2 \vec{d\xi};
\end{equation}
from the identity for $F$ in Lemma \ref{lem:f} (in the second term 
we have exchanged $\xi_1 \leftrightarrow \xi_3$ names to be consistent with how they will cancel):
\begin{align}
    \label{eq:1}&\int_{\abs{\xi_3} \leq \xi_1\wedge \xi_2} A^2 F(\xi_1 \wedge \xi_2) 
\abs{\xi_1}^{2+2s}\abs{\xi_2}^2\abs{\xi_3}^2 \abs{\hat u(\xi_1)}^2
    \Re(\hat u(\xi_2) \bar {\hat u}'(\xi_2)) \abs{\hat u(\xi_3)}^2\vec{d\xi}\\
    \label{eq:2}+ &\int_{\abs{\xi_1} \leq \xi_2\wedge \xi_3} \frac12 A^2 F(\abs{\xi_1})
\abs{\xi_1}^2 \abs{\xi_2}^2 \abs{\xi_3}^{2+2s}\abs{\hat u(\xi_1)}^2
    \Re(\hat u(\xi_2) \bar {\hat u}'(\xi_2))\abs{\hat u(\xi_3)}^2\vec{d\xi} ;
\end{align}
and from terms that come from when the time derivative lands on $F$ (which we will 
deal with at the end):
$$ \int_{\xi_1, \xi_2} (\partial_t F(\xi_1 \wedge \xi_2)) E_{\xi_1 \xi_2}\vec{d\xi}.$$

To control these remainders we need cancellation from the derivative of $E^s_n$. Each term 
in $E^s_n$ gives 3 parts: when the derivative falls on the high frequency $u$ factors (which 
we have combined by symmetry below):
\begin{align}
    \label{eq:3}&-\int_{\abs{\xi_3} \leq \xi_1 \wedge \xi_2} \frac12 A^2 F(\xi_1 \wedge \xi_2)
\left(\abs{\xi_1}^{2s} + \abs{\xi_2}^{2s}\right)\abs{\xi_1}^2\abs{\xi_2}^2\abs{\xi_3}^2
    \abs{\hat u(\xi_1)}^2 \Re(\hat u(\xi_2) \bar {\hat u}'(\xi_2)) \abs{\hat u(\xi_3)}^2\vec{d\xi}\\
    \label{eq:4}&- \int_{\abs{\xi_1} \leq \xi_2\wedge \xi_3} \frac12 A^2 F(\abs{\xi_1})
\abs{\xi_1}^2 \left(\abs{\xi_2}^{2s} + \abs{\xi_3}^{2s}\right)\abs{\xi_2}^{2}\abs{\xi_3}^2 
    \abs{\hat u(\xi_1)}^2\Re(\hat u(\xi_2) \bar {\hat u}'(\xi_2)) \abs{\hat u(\xi_3)}^2\vec{d\xi}\\
    \label{eq:5}&+ \int_{\abs{\xi_1} \leq \abs{\xi_3}\leq \abs{\xi_2}} \frac12 A^2 F(\abs{\xi_1})
\abs{\xi_1}^{2s}\abs{\xi_1}^2\abs{\xi_2}^2\abs{\xi_3}^2
    \abs{\hat u(\xi_1)}^2 \Re(\hat u(\xi_2) \bar {\hat u}'(\xi_2)) \abs{\hat u(\xi_3)}^2\vec{d\xi};
\end{align}
when the derivative lands on lower frequency $u$ factors:
\begin{align}
\label{eq:en1} &-\int_{\abs{\xi_3} \leq \xi_1 \wedge \xi_2} \frac12 A^2 F(\xi_1 \wedge \xi_2)
\abs{\xi_2}^{2s}\abs{\xi_1}^2\abs{\xi_2}^2\abs{\xi_3}^2
    \abs{\hat u(\xi_1)}^2 \abs{\hat u(\xi_2)}^2 \Re(\hat u(\xi_3) \bar {\hat u}'(\xi_3))\vec{d\xi} \\
 \label{eq:en2}          &- \int_{\abs{\xi_1} \leq \xi_2\wedge \xi_3} \frac12 A^2 F(\abs{\xi_1})
\abs{\xi_1}^2 \abs{\xi_2}^{2s} \abs{\xi_2}^{2}\abs{\xi_3}^2 
    \Re(\hat u(\xi_1) \bar {\hat u}'(\xi_1)) \abs{\hat u(\xi_2)}^2 \abs{\hat u(\xi_3)}^2\vec{d\xi} \\
\label{eq:en3} &+ \int_{\abs{\xi_1} \leq \abs{\xi_3}\leq \abs{\xi_2}} \frac12 A^2 F(\abs{\xi_1})
\abs{\xi_1}^{2s}\abs{\xi_1}^2\abs{\xi_2}^2\abs{\xi_3}^2
    \Re(\hat u(\xi_1) \bar {\hat u}'(\xi_1)) \abs{\hat u(\xi_2)}^2 \abs{\hat u(\xi_3)}^2\vec{d\xi} \\
\label{eq:en4} &+ \int_{\abs{\xi_1} \leq \abs{\xi_3}\leq \abs{\xi_2}} \frac12 A^2 F(\abs{\xi_1})
\abs{\xi_1}^{2s}\abs{\xi_1}^2\abs{\xi_2}^2\abs{\xi_3}^2
    \abs{\hat u(\xi_1)}^2\abs{\hat u(\xi_2)}^2 \Re(\hat u(\xi_3) \bar {\hat u}'(\xi_3))\vec{d\xi} ;
\end{align}
or when the derivative lands on $F$.

Again we will focus on the high frequency part and leave the other parts for later. We consider 
\eqref{eq:error} in cases depending on the smallest frequency:

\begin{itemize}
    \item($\abs{\xi_2} \leq \xi_1 \wedge \xi_2$): Exactly as in the discussion in 
        Section \ref{ssec:derivation} in this case \eqref{eq:error} 
        is bounded using Cauchy Schwartz and the fact that $\abs{F} \lesssim 1$
        $$\abs{\eqref{eq:error}} \lesssim \abs{A}^2\norm{(u,u')}^2_{\dot H_x^{1+s} \times \dot H_x^s}
    \norm{(u,u')}^4_{\dot H_x^{5/4} \times \dot H_x^{1/4}}$$

\item($\abs{\xi_3} \leq \xi_1 \wedge \xi_2$): Here we get exact 
    cancellation between \eqref{eq:error}, \eqref{eq:1}, and \eqref{eq:3}.

\item($\abs{\xi_1} \leq \xi_2 \wedge \xi_3$): Here we combine
    \eqref{eq:error}, \eqref{eq:2}, \eqref{eq:4}, and \eqref{eq:5} by noticing that in this case 
    $F(\xi_1 \wedge \xi_2) = F(\abs{\xi_1})$. Everything cancels except a term we can 
    control:
\begin{align*}
    - &\Big\lvert\int_{\abs{\xi_1} \leq \abs{\xi_2} \leq \abs{\xi_3}} \frac12 A^2F(\abs{\xi_1}) 
\abs{\xi_1}^{2s} \abs{\xi_1}^2\abs{\xi_2}^2\abs{\xi_3}^2
\abs{\hat u(\xi_1)}^2 \Re(\hat u(\xi_2) \bar {\hat u}'(\xi_2)) \abs{\hat u(\xi_3)}^2\vec{d\xi}
    \Big\rvert\\
      &\lesssim \abs{A}^2 \norm{(u,u')}^2_{\dot H_x^{1+s} \times \dot H_x^s}
    \norm{(u,u')}^4_{\dot H_x^{5/4} \times \dot H_x^{1/4}}
\end{align*}
since, in this region $\abs{\xi_2}^2\abs{\xi_3}^2 \leq \abs{\xi_2}^{3/2}\abs{\xi_3}^{5/2}$.

\end{itemize}
Next we consider terms where the time derivative falls on low frequency factors of $E_n^s$.
These will be controlled by 
    $$\abs{A}^2\norm{(u,u')}^2_{\dot H_x^{1+s} \times \dot H_x^s}
    \norm{(u,u')}^4_{\dot H_x^{5/4} \times \dot H_x^{1/4}}.$$ We can see this by using 
the following inequalities
on their integration regions:
\begin{align*}
    \abs{\xi_2}^{2s}\abs{\xi_1}^2\abs{\xi_2}^2\abs{\xi_3}^2 &\leq 
    \abs{\xi_1}^{5/2}\abs{\xi_2}^{2s+2}\abs{\xi_3}^{3/2} 
&& (\eqref{eq:en1}: {\abs{\xi_3} \leq \xi_1 \wedge \xi_2})\\
\abs{\xi_1}^2 \abs{\xi_2}^{2s} \abs{\xi_2}^{2}\abs{\xi_3}^2 &\leq 
\abs{\xi_1}^{3/2} \abs{\xi_2}^{2s+2} \abs{\xi_3}^{5/2}
&&(\eqref{eq:en2}: {\abs{\xi_1} \leq \xi_2\wedge \xi_3})\\
\abs{\xi_1}^{2s}\abs{\xi_1}^2\abs{\xi_2}^2\abs{\xi_3}^2 &\leq 
\abs{\xi_1}^{2s+1}\abs{\xi_2}^{5/2}\abs{\xi_3}^{5/2}
&&(\eqref{eq:en3}: {\abs{\xi_1} \leq \abs{\xi_3}\leq \abs{\xi_2}})\\
\abs{\xi_1}^{2s}\abs{\xi_1}^2\abs{\xi_2}^2\abs{\xi_3}^2 &\leq 
\abs{\xi_1}^{2s+2}\abs{\xi_2}^{5/2}\abs{\xi_3}^{3/2}
&&(\eqref{eq:en4}:{\abs{\xi_1} \leq \abs{\xi_3}\leq \abs{\xi_2}})
\end{align*}

Finally, when the derivative lands on $F$ we end up in a similar situation to when the 
derivative lands on low frequency in $u$ since $F$ depends on the lowest frequency when it 
appears. We compute examples from each term for the sake concision since all terms are 
controlled similarly:

From 
$$ \int_{\xi_1, \xi_2} (\partial_t F(\xi_1 \wedge \xi_2)) A E_{\xi_1 \xi_2}\vec{d\xi} $$
the result follows from the computations in Proposition \ref{prop:quad_bounds}, so we just compute 
the term coming from $c$ here.
We use Lemmas \ref{lem:kernel} and \ref{lem:f} as well as a relabeling in 
$\xi_1$ and $\xi_2$ to
make $\abs{\xi_1} \leq \abs{\xi_2}:$
\begin{align*}
    \Big\lvert\int_{\xi_1,\xi_2}& \partial_t F(\xi_1 \wedge \xi_2) \frac14 A \abs{\xi_1}^2 \abs{\xi_2}^2 
\frac{\abs{\xi_1}^{2s} - \abs{\xi_2}^{2s}}{\abs{\xi_1}^2 - \abs{\xi_2}^2} 
\Re(\hat u(\xi_1) \bar {\hat u}'(\xi_1) \hat u(\xi_2) \bar {\hat u}'(\xi_2))\vec{d\xi}\Big\rvert\\
& \lesssim \int_{\abs{\xi_3} \leq \abs{\xi_1} \leq \abs{\xi_2}}
\Big\lvert(1+s)A^2\abs{\xi_1}^2 \abs{\xi_2}^{2s} \abs{\xi_3}^2 
\Re(\hat u(\xi_1) \bar {\hat u}'(\xi_1)) \Re(\hat u(\xi_2) \bar {\hat u}'(\xi_2))
\Re(\hat u(\xi_3) \bar {\hat u}'(\xi_3))\Big\rvert \vec{d\xi} \\
& \leq (1+s) \abs{A}^2\norm{(u,u')}^2_{\dot H_x^{1+s} \times \dot H_x^s}
    \norm{(u,u')}^4_{\dot H_x^{5/4} \times \dot H_x^{1/4}}
\end{align*}
since on this region
$$\abs{\xi_1}^2 \abs{\xi_2}^{2s} \abs{\xi_3}^2
\leq\abs{\xi_1}^{3/2}\abs{\xi_2}^{2s+1}\abs{\xi_3}^{3/2}.$$
Note the reason this works is because the multiplier in Lemma \ref{lem:kernel} 
is always bounded by the higher frequency, also this is where the dependence on $s$ 
comes from.

Lastly when the derivative lands on $F$ in $E^s_n$ terms we end up with more copies of $u$ so we get a different bound. All work the 
same so let's look at the first term \eqref{eq:en1}. We see 
\begin{align*}
    \Big\lvert-&\int_{\abs{\xi_3}\leq \xi_1\wedge \xi_2} 
    \frac14 A^2 \partial_t F(\xi_1 \wedge \xi_2)
\abs{\xi_2}^{2s}\abs{\xi_1}^2\abs{\xi_2}^2\abs{\xi_3}^2
\abs{\hat u(\xi_1)}^2 \abs{\hat u(\xi_2)}^2 \abs{\hat u(\xi_3)}^2\vec{d\xi}\Big\rvert \\
&\lesssim\int_{\abs{\xi_3}, \abs{\xi_4} \leq \xi_1\wedge \xi_2}
    \Big\lvert A^3
    \abs{\xi_2}^{2s}\abs{\xi_1}^2\abs{\xi_2}^2\abs{\xi_3}^2\abs{\xi_4}^2
\abs{\hat u(\xi_1)}^2 \abs{\hat u(\xi_2)}^2 \abs{\hat u(\xi_3)}^2 
\Re(\hat u(\xi_4) \bar {\hat u}'(\xi_4))\Big\rvert \vec{d\xi} \\
& \leq  \abs{A}^3\norm{(u,u')}^2_{\dot H_x^{1+s} \times \dot H_x^s}
\norm{(u,u')}^4_{\dot H_x^{5/4} \times \dot H_x^{1/4}}\norm{(u,u')}^2_{\dot H_x^1 \times L_x^2}
\end{align*}
since on this region 
$$\abs{\xi_1}^2 \abs{\xi_2}^{2s+2} \abs{\xi_3}^2\abs{\xi_4}^2
\leq\abs{\xi_1}^{5/2}\abs{\xi_2}^{2s+2}\abs{\xi_3}^{2}\abs{\xi_4}^{3/2}.$$

\end{proof}
\subsection{Smallness of the Corrections}
In this section we will prove that the corrections we made to the energy are perturbative.
First, we note that we can estimate the corrections as follows:

\begin{lemma}
    \label{lem:corrections}
    Let $u \in C_t^0([0,T], H_x^{s_0+1}) \cap C_t^1([0,T], H_x^{s_0})$ solve \eqref{eq:kirchhoff} in the 
    model case with 
$s_0 \geq 0$. Further suppose the conditions of Lemma \ref{lem:f} are satisfied at $t \in [0,T]$. Then at time $t$ we have 
    $$\int_{\xi_1,\xi_2} \abs{ A 
        F(\xi_1 \wedge \xi_2)E^s_{\xi_1\xi_2}}\vec{d\xi}  \leq \frac32 (1+s)\abs{A} 
        \norm{(u,u')}^2_{\dot H_x^{1+s} \times \dot H_x^s} 
        \norm{(u,u')}^2_{\dot H_x^{1} \times L_x^2}$$
    and 
    $$\abs{E_n^s} \leq \frac32 \abs{A}^2 \norm{(u,u')}^2_{\dot H_x^{1+s} \times \dot H_x^s}
    \norm{(u,u')}^4_{\dot H_x^1 \times L_x^2}$$
\end{lemma}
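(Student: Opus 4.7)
The plan is to reduce both bounds to routine applications of Fubini's theorem, with the only new analytic input being the pointwise estimate $|F(r)| \lesssim 1$ from Lemma \ref{lem:f}. Both estimates are of ``perturbative'' type, so no cancellation or oscillation structure will be exploited --- pointwise domination suffices.

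For the first bound, the argument is essentially a two-line reduction. Since $|F(\xi_1 \wedge \xi_2)| \lesssim 1$ under the hypothesis of Lemma \ref{lem:f}, the integrand is pointwise dominated by $|A| \cdot |E^s_{\xi_1\xi_2}|$, and Proposition \ref{prop:quad_bounds} already supplies a bound of the form $(1+s)\norm{(u,u')}^2_{\dot H^{s+1}_x \times \dot H^s_x}\norm{(u,u')}^2_{\dot H^1_x \times L^2_x}$ for $\int |E^s_{\xi_1\xi_2}|\vec{d\xi}$. Tracking the implicit constant (which tends to $1$ as $\norm{u}_{\dot H^1_x}$ becomes small, by the explicit form of $F$) recovers the stated $3/2$.

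For the second bound, I would estimate each of the three terms in $E^s_n$ individually, again pulling out $|F| \lesssim 1$ at the outset. In each of the three integrals, the frequency weight factors as a pure product across the three variables: the first and second terms carry weight $\abs{\xi_1}^2\abs{\xi_2}^{2+2s}\abs{\xi_3}^2$, while the third carries $\abs{\xi_1}^{2+2s}\abs{\xi_2}^2\abs{\xi_3}^2$. In every case, exactly one frequency receives the weight $\abs{\xi}^{2+2s}$ (contributing $\norm{u}_{\dot H^{s+1}_x}^2$) and the other two receive $\abs{\xi}^2$ (each contributing $\norm{u}_{\dot H^1_x}^2$). Dropping the domain restrictions is legitimate once absolute values have been taken, since the integrands are then nonnegative. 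Applying Fubini then produces the product $\norm{u}_{\dot H^{s+1}_x}^2 \norm{u}_{\dot H^1_x}^4$ in each of the three terms, and combining the three $1/4$ prefactors with the $A^2$ yields the asserted constant $3/2$.

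There is no genuine obstacle here: the key design choice was made upstream, when the three terms in $E^s_n$ were arranged so that their weights factor with one copy of $\abs{\xi}^{2+2s}$ balanced against two copies of $\abs{\xi}^2$. Given the boundedness of $F$, the estimates are entirely mechanical and involve no interaction between the three frequency variables beyond separation of variables.
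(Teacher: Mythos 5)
Your proposal is correct and matches the paper's own (very brief) proof, which simply cites Proposition \ref{prop:quad_bounds} and Lemma \ref{lem:f} for the first bound and Definition \ref{def:model_energy} and Lemma \ref{lem:f} for the second; you have simply supplied the details. One small imprecision worth noting: the \emph{statement} of Proposition \ref{prop:quad_bounds} only bounds $\abs{\int E^s_{\xi_1\xi_2}\,\vec{d\xi}}$, but you need (and correctly use) the fact that its \emph{proof} controls the term-by-term absolute integrals, which works because $a$, $b$ have definite sign and the $c$ term is bounded with absolute values on the $\Re$-factors.
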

\begin{proof}
    The first bound follows from Proposition \ref{prop:quad_bounds} and Lemma \ref{lem:f}.
    The second bound follows from the definition of $E^s_n$ in Definition 
    \ref{def:model_energy} and Lemma \ref{lem:f}.
\end{proof}

Next we will prove Theorem \ref{theorem:main} in the model case which we have restated below 
for convenience with the small constant replaced by its dependence on $A$:
\begin{manualtheorem}{1}[Model case]
Suppose that $u \in C_t^0([0,T], H_x^{s_0+1}) \cap C_t^1([0,T], H_x^{s_0})$, $s_0 \geq 1/4$ 
    solves \eqref{eq:kirchhoff} in the model case. Then there exists a family of modified energies 
    $E^s(t)$ with the property that for any $t \in [0, T]$ for which 
    $$ \norm{(u(t), u'(t))}_{\dot H_x^{1} \times L_x^2} \leq \delta = \frac1{\sqrt{8(1+s_0)A}}$$
    then for all $0 \leq s \leq s_0$
    \begin{equation}\label{eq:model_diff}
    \partial_t E^{s}(t) \lesssim_{s_0} E^{s}(t) (E^{1/4}(t))^2
    \end{equation}
    and  
    \begin{equation}\label{eq:model_sim}
    \norm{(u(t), u'(t))}^2_{\dot H_x^{1+s} \times \dot H_x^{s}} \sim_{s_0} E^{s}(t).
    \end{equation}
\end{manualtheorem}
\begin{proof}
Note that the smallness condition on the $\dot H^1_x \times L^2_x$ size of $(u(t), u'(t))$ 
implies the condition in Lemma \ref{lem:f}. In particular, we have the lower bound
$$ 1 -\frac14 \leq 1 + A\norm{u}_{\dot H^1_x}^2 = 1 + N$$

    First we use the fact that our corrections were small to show that 
    our modified energy controls the $\dot H_x^{s+1} \times \dot H_x^s$ norm.
    From Lemma \ref{lem:corrections}, the definition of $E^s$ in Definition \ref{def:model_energy}, 
    and the assumption that 
$$\norm{(u,u')}_{\dot H_x^1 \times L_x^2} \leq \delta$$
we get that 
    \begin{align*}
    \norm{(u(t), u'(t))&}^2_{\dot H_x^{1+s} \times \dot H_x^s} 
    \leq \norm{u(t)}_{\dot H_x^{1+s}}^2 + \norm{u'(t)}_{\dot H_x^s}^2 \\
    &\leq \frac43 (1+N)\norm{u(t)}_{\dot H_x^{1+s}}^2 + \norm{u'(t)}_{\dot H_x^s}^2 \\
    &\leq \frac83\int_{\xi_1} E^s_{\xi_1} d\xi_1\\
    &\leq \frac83\abs{E^s} +
    \frac83\abs{\int_{\xi_1,\xi_2} A 
    F(\xi_1 \wedge \xi_2)E^s_{\xi_1\xi_2}\vec{d\xi} } + \frac83\abs{E^s_n}\\
& \leq \frac83 \abs{E^s} + 4 \abs{A}(1+s) \delta^2 
\norm{(u(t), u'(t))}^2_{\dot H_x^{1+s} \times \dot H_x^s} 
    + 4\abs{A}^2 \delta^4\norm{(u(t), u'(t))}^2_{\dot H_x^{1+s} \times \dot H_x^s}\\
    & \leq \frac83 \abs{E^s} + \frac12 \norm{(u(t), u'(t))}^2_{\dot H_x^{1+s} \times \dot H_x^s} 
+ \frac1{16}\norm{(u(t), u'(t))}^2_{\dot H_x^{1+s} \times \dot H_x^s}.\\
    \end{align*}
In particular, we may absorb the terms on the right into the left hand side to arrive at 
$$\norm{(u(t), u'(t))}^2_{\dot H_x^{1+s} \times \dot H_x^s} \lesssim \abs{E^s}.$$
This proves one direction of \eqref{eq:model_sim}.

Note that indeed $E^s(t) \geq 0$ which follows from the fact that our 
corrections are smaller than the unmodified energy:
\begin{align*}
    \Big\vert\int_{\xi_1,\xi_2} A 
    &F(\xi_1 \wedge \xi_2)E^s_{\xi_1\xi_2}\vec{d\xi}\Big\vert  + \abs{E^s_n}\\
    &\leq  \frac34 A (1+s)\delta^2 \norm{(u(t), u'(t))}^2_{\dot H_x^{1+s} \times \dot H_x^s} 
    + \frac34 A^2 \delta^4\norm{(u(t), u'(t))}^2_{\dot H_x^{1+s} \times \dot H_x^s}\\
                                                         &\leq \frac7{64} 
    \norm{(u(t), u'(t))}^2_{\dot H_x^{1+s} \times \dot H_x^s}\\
    &\leq \frac14 (1+N)\norm{u(t)}_{\dot H_x^{1+s}}^2 + \frac14 \norm{u'(t)}_{\dot H_x^s}^2\\
    &= \frac12 \int_{\xi_1} E^s_{\xi_1} d\xi_1
\end{align*}

To get the other direction of \eqref{eq:model_sim}, we use the fact 
that the nonlinearity $N$ is bounded when $\norm{(u(t),u'(t))}_{\dot H_x^1 \times L_x^2}$ is bounded
as well as the fact that our energy corrections are smaller than the unmodified energy.
In particular 
\begin{align*}
    \abs{E^s} &\leq 
    \abs{\int E^s_{\xi_1} d\xi_1} + \abs{\int_{\xi_1,\xi_2} A 
    F(\xi_1 \wedge \xi_2)E^s_{\xi_1\xi_2}\vec{d\xi} } + \abs{E^s_c}\\ 
              &\lesssim \int E^s_{\xi_1} d\xi_1\\
    &= \frac12 (1+N)\norm{u(t)}_{\dot H_x^{1+s}}^2 + \frac12 \norm{u'(t)}_{\dot H_x^s}^2\\
    &\lesssim_N \norm{u(t)}_{\dot H_x^{1+s}}^2 + \norm{u'(t)}_{\dot H_x^s}^2
\end{align*}

Finally the differential inequality \eqref{eq:model_diff} follows from Proposition \ref{prop:simple_energy} and 
the fact that 
$$\norm{(u,u')}^2_{\dot H_x^1 \times L_x^2} \leq \delta^2 \lesssim_{A,s_0} 1.$$

In particular, Proposition \ref{prop:simple_energy} gives us 
\begin{align*}
    \abs{\partial_t E^{s}(t)} 
    &\lesssim_{s_0} A^2\norm{(u,u')}^2_{\dot H_x^{1+s} \times \dot H_x^s}
    \norm{(u,u')}^4_{\dot H_x^{5/4} \times \dot H_x^{1/4}}\\
    &\qquad + 
    A^3\norm{(u,u')}^2_{\dot H_x^{1+s} \times \dot H_x^s}
    \norm{(u,u')}^4_{\dot H_x^{5/4} \times \dot H_x^{1/4}}\norm{(u,u')}^2_{\dot H_x^{1} \times L_x^2}\\
    &\lesssim_{s_0,A} E^s(t)(E^{1/4}(t))^2
\end{align*}

\end{proof}


\section{The General Case} \label{gen}

Before we begin, let us note that similarly to the model case in Section \ref{sec:simple}, 
we will require 
$$\norm{(u(t), u(t))}_{\dot H^1_x \times L^2} \leq \delta$$
for some small constant $\delta$ to be chosen below.
Thus, inputs to $N$ will live in the compact interval $[0,\delta^2]$ which is why 
we only need control of $N$ and its derivatives on a compact interval near 0.

To generalize our nonlinearity to general $C^2_{loc}$ functions we would like to replace $A$ 
in the model case above with $N'$. This, however, fails because $N'$ depends on time 
through high frequency components of $u$ which we will not be able to control when derivatives 
land on $N'$. Instead we use a decomposition of a more paradifferential flavor.

First, suppose that $N$ is analytic. Since $N(0) = 0$ we can expand and consider the 
factor bringing the highest frequency:
\begin{align*}
    N\Bigg(\int_{\xi} &\abs{\xi}^2 \abs{\hat u(\xi)}^2 d\xi\Bigg) 
= \sum_{i \geq 1} c_i \left(\int_{\xi} \abs{\xi}^2 \abs{\hat u(\xi)}^2 d\xi\right)^i\\
&= \sum_{i \geq 1} c_i \int_{\xi_1, \ldots, \xi_i} \abs{\xi_1}^2 \abs{\hat u(\xi_1)}^2 \cdots 
\abs{\xi_i}^2 \abs{\hat u(\xi_i)}^2 \vec{d\xi}\\
&= \sum_{i \geq 1} c_i \left(\int_{\abs{\xi_1} \geq \abs{\xi_2}, \ldots, \abs{\xi_i}} 
    \abs{\xi_1}^2 \abs{\hat u(\xi_1)}^2 \cdots d\xi
    + \cdots +\int_{\abs{\xi_i} \geq \abs{\xi_1}, \ldots, \abs{\xi_{i-1} }} \abs{\xi_i}^2 
\abs{\hat u(\xi_i)}^2 \cdots d\vec{\xi} \right)\\
&= \sum_{i \geq 1} c_i i \int_{\xi} \abs{\xi}^2\abs{\hat u(\xi)}^2 \left(\int_{\abs{\xi} \geq \abs{\eta}}
\abs{\eta}^2 \abs{\hat u(\eta)}^2 d\eta\right)^{i-1}d\xi\\
&= \int_{\xi} \abs{\xi}^2\abs{\hat u(\xi)}^2 N'\left(\int_{\abs{\eta} \leq \abs{\xi}} 
\abs{\eta}^2 \abs{\hat u(\eta)}^2 d\eta\right) d\xi
\end{align*}

Note that this is akin to the usual paradifferential expansion where we do not distinguish the 
high high interactions from the high low interactions. We will see that it is enough to 
deal with them in the same way.

Further, notice that this is simply a paradifferential interpretation of the fundamental theorem of calculus. If we put 
$$f(r) = N\Bigg(\int_{\abs{\xi} \leq r} \abs{\xi}^2 \abs{\hat u(\xi)}^2 d\xi\Bigg),$$
then the above equality can be deduced from 
$$f(\infty) = \int_0^\infty f'(r) dr.$$

Now, the $N'$ term is playing the role of $A$ in the model case because it depends on 
frequencies less than $\xi$, so when the derivative falls here we will still have control. This 
is a useful enough comparison that we make the following definition.
\begin{definition}
Let $u \in C_t^0([0,T], H_x^{s_0+1}) \cap C_t^1([0,T], H_x^{s_0})$ solve \eqref{eq:kirchhoff} in the 
general case with $s_0 \geq 0$. We define 
$$A(r) = N'\left(\int_{\abs{\eta} \leq r}
\abs{\eta}^2 \abs{\hat u(\eta)}^2 d\eta\right)$$
which parallels the constant $A$ in Section \ref{sec:simple}.
\end{definition}
Now since we control its derivatives, we can think of $A$ as being just as constant as in 
Section \ref{sec:simple}.
We can see explicitly how we get this control in the derivative of the unmodified linear energy 
\eqref{eq:quad_energy}.
We get 
cancellation when the derivative falls on the linear part from the equation; we get what 
we had in the model case when the derivative misses $N'$; and the new term involves $N''$:
\begin{align}
    \partial_t &\int E^s_{\xi_1} d\xi_1 
= \partial_t \int_{\xi_1} \frac12\left(1 + \int_{\xi_2}\abs{\xi_2}^2
\abs{\hat u(\xi_2)}^2d\xi_2\right) \abs{\xi_1}^{2s+2} \abs{\hat u(\xi_1)}^2 
+ \frac12 \abs{\xi_1}^{2s}\abs{\hat u'(\xi_1)}^2d\xi_1\nonumber\\
    =& \label{eq:unsymmetrized} \int_{\xi_1, \xi_2}  
\abs{\xi_1}^{2+2s}A(\abs{\xi_2})\abs{\xi_2}^2 \abs{\hat u(\xi_1)}^2 
\Re(\hat u(\xi_2) \bar {\hat u}'(\xi_2))  
d\xi_1d\xi_2\\
    +& \int_{\abs{\xi_3} \leq \abs{\xi_2}}  
    \abs{\xi_1}^{2+2s}\abs{\xi_2}^2 \abs{\xi_3}^2\abs{\hat u(\xi_1)}^2  \abs{\hat u(\xi_2)}^2 
\Re(\hat u(\xi_3) \bar {\hat u}'(\xi_3)) 
N''\left(\int_{\abs{\xi_4} \leq \abs{\xi_3}} \abs{\xi_4}^2 \abs{\hat u(\xi_4)}^2 d\xi_4\right)
\vec{d\xi}\nonumber.
\end{align}
But it is clear that we can control the $N''$ term as long as $N''$ is uniformly bounded.

The only remaining obstruction to considering the function $A$ as a constant 
as in the model case is that we need the argument of $A$ to depend symmetrically on frequencies 
$\xi_1$ and $\xi_2$ as required in Proposition \ref{prop:coefficients}. To symmetrize 
$A$ in the correction we want $A$ to depend on the minimum frequency:
$$\int_{\xi_1,\xi_2} F(\xi_1 \wedge \xi_2) A(\xi_1 \wedge \xi_2)  E^s_{\xi_1 \xi_2} \vec{d\xi}.$$ 
However, we see that in \eqref{eq:unsymmetrized} $A$ depends only on $\abs{\xi_2}$.
Thus we would like to control the difference between the symmetric and antisymmetric versions: 
\begin{equation}\label{eq:difference}
-\int_{\xi_1,\xi_2} F(\xi_1 \wedge \xi_2) \Big( A(\abs{\xi_2}) - A(\xi_1 \wedge \xi_2)\Big)  E^s_{\xi_1 \xi_2} \vec{d\xi}. 
\end{equation}
We see by Taylor's theorem
\begin{align}
\abs{A(\abs{\xi_2}) - A(\xi_1 \wedge \xi_2)} &= \abs{N'\left(\int_{\abs{\xi_3} \leq \abs{\xi_2}} \abs{\xi_3}^2 \abs{\hat u(\xi_3)}^2 d\xi_3
    \right)
- N'\left(\int_{\abs{\xi_3} \leq \xi_1 \wedge \xi_2} \abs{\xi_3}^2 
\abs{\hat u(\xi_3)}^2 d\xi_3\right)}\nonumber \\& \label{eq:Taylor}\leq \norm{N''}_{L^\infty} \int_{\abs{\xi_1}
\leq 
\abs{\xi_3} \leq \abs{\xi_2}} \abs{\xi_3}^2 \abs{\hat u(\xi_3)}^2 d\xi_3.
\end{align}
Then \eqref{eq:difference} is controlled by 
$$ \norm{N''}_{L^\infty} \int_{\abs{\xi_1} \leq 
\abs{\xi_3} \leq \abs{\xi_2}} 
\abs{\xi_1}^{2+2s}\abs{\xi_2}^2 \abs{\xi_3}^2  \abs{\hat u(\xi_1)}^2
\Re(\hat u(\xi_2) \bar {\hat u}'(\xi_2)) \abs{\hat u(\xi_3)}^2\vec{d\xi} 
$$
Luckily, we can get control of this by integrating by parts as in Section \ref{ssec:derivation}.
Thus we need to add the following term to our modified energy
\begin{align*}
E^s_A=  -\int_{\abs{\xi_1} \leq \abs{\xi_2}} \frac12 \abs{\xi_1}^{2s+2} \abs{\xi_2}^2 
    \Big(A(\abs{\xi_2}) - A(\xi_1 \wedge \xi_2)\Big)
\abs{\hat u(\xi_1)}^2 \abs{\hat u(\xi_2)}^2\vec{d\xi} 
\end{align*}
which has the same bound as our second order corrections in Proposition \ref{prop:quad_bounds}.
Note that in the model case, $N'$ was a constant which is why this term did not appear. We denote 
this term $E_A^s$ to reflect that it arises from $A$.

The derivative of $E_A^s$ then produces \eqref{eq:difference}, which symmetrizes \eqref{eq:unsymmetrized} as well as remainders when the derivative lands on $\hat u(\xi_1)$ and 
on $A$. These are controlled by  
\begin{align*}
    \norm{N''}_{L^\infty} &\int_{\abs{\xi_1} \leq 
\abs{\xi_3} \leq \abs{\xi_2}} 
\abs{\xi_1}^{2+2s}\abs{\xi_2}^2 \abs{\xi_3}^2  
\Re(\hat u(\xi_1) \bar {\hat u}'(\xi_1))\abs{\hat u(\xi_2)}^2 \abs{\hat u(\xi_3)}^2\vec{d\xi} \\
        +&\norm{N''}_{L^\infty} \int_{\abs{\xi_1} \leq 
\abs{\xi_3} \leq \abs{\xi_2}} 
\abs{\xi_1}^{2+2s}\abs{\xi_2}^2 \abs{\xi_3}^2  
\abs{\hat u(\xi_1)}^2 \abs{\hat u(\xi_2)}^2\Re(\hat u(\xi_3) \bar {\hat u}'(\xi_3))\vec{d\xi}\\  
            +&\norm{N''}_{L^\infty} \int_{\abs{\xi_3} \leq \xi_1 \wedge \xi_2}
\abs{\xi_1}^{2+2s}\abs{\xi_2}^2 \abs{\xi_3}^2  
\abs{\hat u(\xi_1)}^2 \abs{\hat u(\xi_2)}^2\Re(\hat u(\xi_3) \bar {\hat u}'(\xi_3)) 
\vec{d\xi}.
\end{align*}
Each of these is controlled by $\norm{(u,u')}^2_{\dot H_x^{s+1} \times \dot H_x^s} 
\norm{(u,u')}^4_{\dot H_x^{5/4} \times \dot H_x^{1/4}}$.

With this modification the proof of Theorem \ref{theorem:main} follows in the same way as in 
the model case but with the constant also depending on the size of $N'$ and $N''$ on a small  
compact interval.

Because of the similarity we will state the modified energy needed and discuss briefly the 
modifications needed. We start
with the definitions which parallel the model case. The unmodified and leading order energy densities 
are identical and given in Definitions \ref{def:unmodified} and \ref{def:coefficients} respectively.

\begin{definition}\label{def:gen_f}
Let $u \in C_t^0([0,T], H_x^{s_0+1}) \cap C_t^1([0,T], H_x^{s_0})$ solve \eqref{eq:kirchhoff} 
with $s_0 \geq 0$. We define $F(r)$ to be the unique solution to the integral equation 
$$F(r) = 1 - F(r) \int_{\abs{\xi} \leq r} A(\abs{\xi}) \abs{\xi}^2 \abs{\hat u(\xi)}^2 d\xi
- \frac12 \int_{\abs{\xi} \leq r} F(\abs{\xi})A(\abs{\xi})\abs{\xi}^2 \abs{\hat u(\xi)}^2 d\xi,$$
which has the explicit form
$$F(r) = \left(1 + \int_{\abs{\xi} \leq r} A(\abs{\xi})\abs{\xi}^2 \abs{\hat 
u(\xi)}^2d\xi\right)^{-3/2} = \left(1 + N\left(\int_{\abs{\xi} \leq r} \abs{\xi}^2 \abs{\hat 
u(\xi)}^2d\xi\right)\right)^{-3/2} .$$   
\end{definition}
The two expressions for $F$ are equal because of the following identity
$$A(r) r^2 \int_{\abs{\xi} = r} \abs{\hat u(\xi)}^2 d\xi = \frac d{dr} 
N\left(\int_{\abs{\xi} \leq r} \abs{\xi}^2 \abs{\hat u(\xi)}^2 d\xi \right)$$
and the fact that $N(0) = 0.$ Further note that the continuity of $N$ means that 
by making $\norm{u(t)}_{\dot H^1}$ small enough we get that for all $r \geq 0$
$$1/2 \leq 1 + N\left(\int_{\abs{\xi} \leq r} \abs{\xi}^2 \abs{\hat 
u(\xi)}^2d\xi\right).$$
In particular, we have similar bounds as Lemma \ref{lem:f}.

Now we are in a position to define the modified energy needed.
\begin{definition}\label{def:full_energy}
Let $u \in C_t^0([0,T], H_x^{s_0+1}) \cap C_t^1([0,T], H_x^{s_0})$ solve \eqref{eq:kirchhoff} 
in the general case with $s_0 \geq 0$. We define the \emph{modified energy} at regularity $s$ for \eqref{eq:kirchhoff}
in the general case to be 
$$E^s = \int_{\xi_1} E^s_{\xi_1}d\xi_1 + \int_{\xi_1,\xi_2} A(\xi_1 \wedge \xi_2)
    F(\xi_1 \wedge \xi_2)E^s_{\xi_1\xi_2}\vec{d\xi}  + E_n^s + E^s_A$$
where the energy densities $E^s_{\xi_1}$ and $E^s_{\xi_1\xi_2}$ are defined in Definitions \ref{def:unmodified} and \ref{def:coefficients} respectively; the higher order function $F$ is 
defined in Definition \ref{def:gen_f}; we collect the terms taking advantage of integrating by parts 
in 
\begin{align*}
    E^s_n &= -\int_{\abs{\xi_3} \leq \abs{\xi_1}, \abs{\xi_2}} \frac14 
    A(\abs{\xi_3}) A(\xi_1 \wedge \xi_2) F(\xi_1 \wedge \xi_2)
    \abs{\xi_1}^2\abs{\xi_2}^{2+2s}\abs{\xi_3}^2
\abs{\hat u(\xi_1)}^2 \abs{\hat u(\xi_2)}^2 \abs{\hat u(\xi_3)}^2\vec{d\xi} \\
          &- \int_{\abs{\xi_1} \leq \xi_2\wedge \xi_3} \frac14 
          A(\abs{\xi_3}) A(\xi_1 \wedge \xi_2) F(\abs{\xi_1})
\abs{\xi_1}^2 \abs{\xi_2}^{2+2s}\abs{\xi_3}^2 \abs{\hat u(\xi_1)}^2\abs{\hat u(\xi_2)}^2
\abs{\hat u(\xi_3)}^2\vec{d\xi} \\
    &+ \int_{\abs{\xi_1} \leq \abs{\xi_3}\leq \abs{\xi_2}} \frac14  
    A(\abs{\xi_3}) A(\xi_1 \wedge \xi_2)
    F(\abs{\xi_1})\abs{\xi_1}^{2+2s}\abs{\xi_2}^2\abs{\xi_3}^2
\abs{\hat u(\xi_1)}^2 \abs{\hat u(\xi_2)}^2 \abs{\hat u(\xi_3)}^2\vec{d\xi} ;
\end{align*}
and we collect the term we need to control the discrepancy between symmetric and asymmetric $A$ 
terms in 
\begin{align*}
    E^s_A =  -\int_{\abs{\xi_1} \leq \abs{\xi_2}} \frac12 \abs{\xi_1}^{2s+2} \abs{\xi_2}^2
    \Big(A(\abs{\xi_2}) - A(\xi_1 \wedge \xi_2) \Big)
\abs{\hat u(\xi_1)}^2 \abs{\hat u(\xi_2)}^2\vec{d\xi}.
\end{align*}
We use the notation $E^s_n$ to reflect the fact that these terms represent gains from usual 
normal form integrals and $E^s_A$ to reflect the fact that this term depends on the non-constancy 
of $A$.
\end{definition}

\begin{manualtheorem}{1}[General case]
    \label{theorem:gen_main}
Suppose that $u \in C_t^0([0,T], H_x^{s_0+1}) \cap C_t^1([0,T], H_x^{s_0})$, $s_0 \geq 1/4$ 
    solves \eqref{eq:kirchhoff}. Then there exists a constant $\delta$ depending on $N$ and 
    $s_0$ and a family of modified energies 
    $E^s(t)$ with the property that for any $t \in [0, T]$ for which 
    $$ \norm{(u(t), u'(t))}_{\dot H_x^{1} \times L_x^2} \leq \delta$$
    then for all $0 \leq s \leq s_0$
    $$\partial_t E^{s}(t) \lesssim_{s_0} E^{s}(t) (E^{1/4}(t))^2$$
    and  
    $$\norm{(u(t), u'(t))}^2_{\dot H_x^{1+s} \times \dot H_x^{s}} \sim_{s_0} E^{s}(t).$$
\end{manualtheorem}
\begin{proof}
    Since $A$ above depends on low frequency, when the time derivative lands on it we will have 
    control. Thus, the only difference between this case and the model case is that the factors 
    of $A$ depend on different frequencies depending on where each arose from. The factors we 
    introduced needed symmetry and so they depend on $\xi_1 \wedge \xi_2$, but the factors 
    that arose from the equation depend on the introduced frequency $\xi_3$. Let's consider 
    the leading order part which corresponds to the labeled terms in the proof of Proposition
    \ref{prop:simple_energy}. 

We get the term coming from $\partial_t E^s_{\xi_1}$ after canceling with $E_A^s$
\begin{align}
    \label{eq:gen_quad} \int_{\xi_1,\xi_2} & A(\xi_1 \wedge \xi_2) 
\abs{\xi_1}^{2+2s}\abs{\xi_2}^2
\abs{\hat u(\xi_1)}^2 \Re(\hat u(\xi_2) \bar {\hat u}'(\xi_2))\vec{d\xi};
\end{align}
and terms coming from $\partial_t E^s_{\xi_1\xi_2}$
\begin{align}
    \label{eq:gen_quad_correct}- \int_{\xi_1,\xi_2} & A(\xi_1 \wedge \xi_2) 
    F(\xi_1 \wedge \xi_2) 
\abs{\xi_1}^{2s} \abs{\xi_1}^{2+2s}\abs{\xi_2}^2
\abs{\hat u(\xi_1)}^2 \Re(\hat u(\xi_2) \bar {\hat u}'(\xi_2)) \vec{d\xi}.\\
\label{eq:nerror}- \int_{\xi_1,\xi_2,\xi_3} &\frac12 A(\xi_1 \wedge \xi_2) A(\abs{\xi_3})
    F(\xi_1 \wedge \xi_2) 
(\abs{\xi_1}^{2s} - \abs{\xi_2}^{2s})\abs{\xi_1}^2\abs{\xi_2}^2\abs{\xi_3}^2\\
&\nonumber \abs{\hat u(\xi_1)}^2 \Re(\hat u(\xi_2) \bar {\hat u}'(\xi_2)) \abs{\hat u(\xi_3)}^2\vec{d\xi}.
\end{align}
From the integral identity for $F$ in \ref{def:gen_f} 
the terms \eqref{eq:gen_quad} and \eqref{eq:gen_quad_correct} cancel leaving the remainder below 
where in the second term we have switched the names
$\xi_1$ and $\xi_3$ 
because this is how they will cancel with \eqref{eq:nerror}.
\begin{align}
\label{eq:n1}
&\int_{\abs{\xi_3} \leq \xi_1\wedge \xi_2} A(\xi_1 \wedge \xi_2) A(\abs{\xi_3})
F(\xi_1 \wedge \xi_2) 
\abs{\xi_1}^{2+2s}\abs{\xi_2}^2\abs{\xi_3}^2 \abs{\hat u(\xi_1)}^2
    \Re(\hat u(\xi_2) \bar {\hat u}'(\xi_2)) \abs{\hat u(\xi_3)}^2\vec{d\xi}\\
\label{eq:n2}
    + &\int_{\abs{\xi_1} \leq \xi_2\wedge \xi_3} \frac12  A(\xi_3 \wedge \xi_2) A(\abs{\xi_1})
    F(\abs{\xi_1})
\abs{\xi_1}^2 \abs{\xi_2}^2 \abs{\xi_3}^{2+2s}\abs{\hat u(\xi_1)}^2
    \Re(\hat u(\xi_2) \bar {\hat u}'(\xi_2))\abs{\hat u(\xi_3)}^2\vec{d\xi}.
\end{align}
Now, just as in the proof of Proposition \ref{prop:simple_energy}, 
we control \eqref{eq:nerror} in all frequency 
balances except when 
$$\abs{\xi_3} \leq \xi_1 \wedge \xi_2, \quad \text{ or } \quad \abs{\xi_1} \leq \xi_2 \wedge \xi_3.$$
In the case when $\abs{\xi_3} \leq \xi_1 \wedge \xi_2$, the first term of $E^s_n$ helps us 
cancel \eqref{eq:n2} with \eqref{eq:nerror} along with terms which can be controlled.

In the other case when $\abs{\xi_1} \leq \xi_2 \wedge \xi_3$, we notice that by simplifying the
minimums and using the second and third terms of $E^s_n$, \eqref{eq:nerror} becomes 
\begin{align*}
\label{eq:nerror}- \int_{\abs{\xi_1}\leq \xi_2\wedge\xi_3} &\frac12 A(\abs{\xi_1}) A(\abs{\xi_3})
    F(\abs{\xi_1}) 
\abs{\xi_3}^{2s}\abs{\xi_1}^2\abs{\xi_2}^2\abs{\xi_3}^2\\
&\nonumber \abs{\hat u(\xi_1)}^2 \Re(\hat u(\xi_2) \bar {\hat u}'(\xi_2)) \abs{\hat u(\xi_3)}^2\vec{d\xi}.
\end{align*}
In the model case in Proposition \ref{prop:simple_energy}, this partially canceled form of 
\eqref{eq:nerror} and \eqref{eq:n2} canceled exactly since 
$A(\abs{\xi_3})$ and $A(\xi_3 \wedge \xi_2)$ were identical. Here, however, we use the Taylor 
expansion of $N'$ in \eqref{eq:Taylor} discussed above to control this difference. Again we 
get complete cancellation when $\abs{\xi_2} \leq \abs{\xi_3}$ so we are left with
\begin{align*}
\norm{N''}_{L^\infty} &\int_{\abs{\xi_1} \leq \abs{\xi_2} \leq \abs{\xi_4} \leq \abs{\xi_3}} 
\Big\lvert\frac12 A(\abs{\xi_1}) F(\abs{\xi_1})
    \abs{\xi_1}^2 \abs{\xi_2}^2 \abs{\xi_3}^{2+2s}\abs{\xi_4}^2\\ 
                      &\qquad \abs{\hat u(\xi_1)}^2
    \Re(\hat u(\xi_2) \bar {\hat u}'(\xi_2))\abs{\hat u(\xi_3)}^2\abs{\hat u(\xi_4)}^2
    \Big\rvert\vec{d\xi}\\
                      &\lesssim_N \norm{(u,u')}^2_{\dot H_x^{s+1} \times \dot H_x^s}
                      \norm{(u,u')}^4_{\dot H_x^{5/4} \times \dot H_x^{1/4}} 
                      \norm{(u,u')}^2_{\dot H_x^1 \times L_x^2}
\end{align*}
Lastly, we note that implicit in this discussion we have assumed that $F$ has similar bounds 
as in Lemma \ref{lem:f}. These bounds are proved in exactly the same way as in Lemma \ref{lem:f} 
with the only modification being that the bounds depend on the size of $A$ and its derivative 
which depend on the $C^2_{loc}$ size of $N$.
\end{proof}

\section{Implications to Well-Posedness}
\label{sec:implications}

In this section, we focus on the proofs of Theorems \ref{theorem:enhanced} and \ref{thm:weak_solutions} which are applications 
of the modified energy estimate in Theorem \ref{theorem:main}.

\subsection{Enhanced Lifespan}

We start with Theorem \ref{theorem:enhanced}, which proves quintic lifespan for solutions to \eqref{eq:kirchhoff} with initial 
data in Sobolev spaces where local well-posedness is known, with the important improvement that this lifespan only 
depends on the lower regularity Sobolev norm $\dot H^{5/4}_x \times \dot H^{1/4}_x$. We repeat the statement here 
for clarity.
\begin{manualtheorem}{\ref{theorem:enhanced}}
    Let $\delta$ be the small constant in Theorem \ref{theorem:main}.
    Suppose that the initial data $(u_0, u_1) \in H_x^{s_0+1} \times H_x^{s_0},$ $s_0 \geq 1/2$ has 
    $$\norm{(u_0, u_1)}_{\dot H_x^{5/4} \times \dot H_x^{1/4}} = \epsilon$$ 
    and
    $$\norm{(u_0, u_1)}_{\dot H_x^{1} \times L_x^2} = \delta_0 \ll_{s_0, \epsilon} \delta.$$
    Then 
    there exists a unique solution 
    $$u \in C_t^0([0,T], H_x^{s+1}) \cap C_t^1([0,T],H_x^{s})$$ 
    to \eqref{eq:kirchhoff} where $T \sim_{s_0} 1/\epsilon^4.$
    Furthermore we have the energy estimates on $[0,T]$ for all $0 \leq s \leq s_0$
    $$\sup_{0 \leq t \leq T}\norm{(u, \partial_t u)}_{\dot H_x^{s+1} \times \dot H_x^{s}} 
    \lesssim_{s_0, \epsilon} \norm{(u_0, u_1)}_{\dot H_x^{s+1} \times \dot H_x^s}.$$
    In particular
    $$\sup_{0 \leq t \leq T}\norm{(u, \partial_t u)}_{\dot H_x^{1} \times L_x^2} \leq \delta$$
    and
    $$\sup_{0 \leq t \leq T}\norm{(u, \partial_t u)}_{\dot H_x^{5/4} \times \dot H_x^{1/4}} \lesssim \epsilon.$$
\end{manualtheorem}
\begin{proof}
This follows from the Hadamard local well-posedness of \eqref{eq:kirchhoff} in 
$H_x^{s_0+1} \times H_x^{s_0}$ for $s_0 \geq 1/2$ and Theorem \ref{theorem:main}.

By the local well-posedness result (see \cite{arosio1996well}) we get a unique solution 
$u \in C_t^0([0,T], H_x^{s_0+1}) \cap C_t^1([0,T], H_x^{s_0})$. We want to show that 
as long as $T \lesssim_{s_0} 1/\epsilon^4$ the norm 
$\norm{(u(t), u'(t))}_{H_x^{s_0+1} \times H_x^{s_0}}$
has not grown by more than a uniform constant. This 
way, we can contradict the maximality of any final time of existence $T \ll 1/\epsilon^4$, 
by again applying 
the local well-posedness theory. We use a continuity argument.

First, let us make the constants in Theorem \ref{theorem:main} explicit. When
the conditions for Theorem \ref{theorem:main} are met, we will get a family of $E^s$ with
the following explicit constant bounds:
$$\abs{\partial_t E^s(t)} \leq C_{s_0} E^s(t) (E^{1/4}(t))^2$$
and 
$$\frac1{\tilde C_{s_0}} \norm{(u(t), u'(t))}^2_{\dot H_x^{s+1} \times \dot H_x^s} \leq 
E^s(t) \leq \tilde C_{s_0} \norm{(u(t), u'(t))}^2_{\dot H_x^{s+1} \times \dot H_x^s}.$$
Now, we can make precise the sizes of $\epsilon,$ $\delta_0$, and $T$. We put
$$\norm{(u_0, u_1)}_{\dot H_x^{5/4} \times \dot H_x^{1/4}} = \epsilon$$  
and assume
$$\norm{(u_0, u_1)}_{\dot H_x^1 \times L_x^2} = \delta_0 < 
\frac{\delta}{4 e^{4\tilde C_{s_0}^2 \epsilon^4} \tilde C_{s_0}^2}.$$ 

Fix a time $T$ with 
$$T \leq \frac1{2C_{s_0}\tilde C^2_{s_0} \epsilon^4},$$
and suppose that up to time $T$
that our norms have grown at most by the following constant factors:
$$\sup_{0 \leq t \leq T} \norm{(u(t), u'(t))}_{\dot H_x^{5/4} \times \dot H_x^{1/4}} 
\leq 2\sqrt{2} \tilde C_{s_0}\epsilon$$
and 
$$\sup_{0 \leq t \leq T} 
\norm{(u(t), u'(t))}_{\dot H_x^1 \times L_x^2} < \delta$$ 
We show that in fact these norms have not grown by more than half of these respective 
constants.

We see that for $0 \leq t \leq T$ the conditions for 
Theorem \ref{theorem:main} are met and we get for each 
$0 \leq s \leq s_0$ an energy 
$E^s$ which satisfies 
$$\abs{\partial_t E^s(t)} \leq C_{s_0} E^s(t) (E^{1/4}(t))^2$$
and 
$$\frac1{\tilde C_{s_0}} \norm{(u(t), u'(t))}^2_{\dot H_x^{s+1} \times \dot H_x^s} \leq 
E^s(t) \leq \tilde C_{s_0} \norm{u(t), u'(t)}^2_{\dot H_x^{s+1} \times \dot H_x^s}.$$

Setting $s = 1/4$, and using Gronwall's inequality we see that 
$$\abs{E^{1/4}(t)} \leq \frac{2}{\sqrt{1 - 2C_{s_0}T(E^{1/4}(0))^2}} E^{1/4}(0).$$
Thus, since $T \leq \frac1{4C_{k,N}\tilde C^2_{s_0} \epsilon^4}$ 
we have 
$$\abs{E^{1/4}(t)} \leq 2 \tilde C_{s_0} \epsilon^2.$$
In particular, this closes the continuity argument for the $\dot H_x^{5/4} \times \dot H_x^{1/4}$ 
norm:
$$\norm{(u(t), u(t))}_{\dot H_x^{5/4} \times \dot H_x^{1/4}} 
\leq \sqrt{2} \tilde C_{s_0} \epsilon.$$

To close the other inequality we use Gronwall's inequality again for $s = 0$
to get 
$$\abs{E^{0}(t)} \leq 2 e^{4\tilde C^2_{s_0} \epsilon^4}E^0(0).$$
This then closes the argument for the $\dot H_x^1 \times L_x^2$ norm:
$$\norm{(u(t), u'(t))}_{\dot H_x^1 \times L_x^2} \leq 2\tilde C_{k}^2 e^{4\tilde C^2_{k} \epsilon^4}
\norm{(u_0, u_1)}_{\dot H_x^1 \times L_x^2}
\leq 2e^{4\tilde C^2_{s_0} \epsilon^4}\tilde C_{s_0}^2 \delta_0 < \frac12 \delta$$

Thus, by the continuity in time of these norms given by the assumed local well-posedness 
theory, we have uniform bounds on our solutions $u$ on times less than 
$T =1/(4C_{s_0,N}\tilde C^2_{s_0} \epsilon^4)$:
$$\sup_{0 \leq t \leq T} \norm{(u(t), u'(t))}_{\dot H_x^{5/4} \times \dot H_x^{1/4}} 
\leq \sqrt{2} \tilde C_{s_0}\epsilon$$
and 
$$\sup_{0 \leq t \leq T} 
\norm{(u(t), u'(t))}_{\dot H_x^1 \times L_x^2} < \delta/2.$$ 

Finally, suppose for contradiction that the final time of existence $T^*$ for our 
solution $u$ is strictly less than $T$.

Then we satisfy the conditions of Theorem \ref{theorem:main}. Now using Gronwall's inequality 
for all $0\leq s \leq s_0$ we get the bounds 
$$\norm{(u(t), u'(t))}_{\dot H_x^{1+s} \times \dot H_x^s} \leq 
2\tilde C_{s_0}^2 e^{4\tilde C^2_{s_0} \epsilon^4}
\norm{(u_0, u_1)}_{\dot H_x^{1+s} \times \dot H_x^s}$$
To control the $L^2$ norm of $u$ we integrate $u'$:
$$\norm{u(t)}_{L^2} \leq T \norm{u'(t)}_{L^2} \leq \frac12 T\delta.$$
In particular, our assumed local well-posedness norm is a uniform constant (not depending 
on $T^*$) times the initial norm:
$$\norm{(u(T^*), u'(T^*))}_{H_x^{1+s_0} \times H_x^{s_0}} \lesssim
\norm{(u_0, u_1)}_{H_x^{1+s_0} \times H_x^{s_0}}. $$
Thus, the local well-posedness theory continues our solution some fixed amount of time not 
depending on $T^*$ until time $T$ which completes the proof.
\end{proof}

\subsection{Local Existence}
Next we turn our attention to Theorem \ref{thm:weak_solutions}.
Because the time of existence in Theorem \ref{theorem:enhanced} only depends on the $\dot H^{5/4}_x \times \dot H^{1/4}_x$
size of the initial data, solutions to \eqref{eq:kirchhoff} for approximate initial data in $H^{3/2}_x \times H^{1/2}_x$ 
exist on a uniform time interval. This allows the construction of solutions to \eqref{eq:kirchhoff} for 
$\dot H^{5/4}_x \times \dot H^{1/4}_x$ initial data 
as weak limits of these approximations. This method will not show that these solutions are in fact the strong 
limit of these approximations, which is why we call such solutions \emph{weak solutions}. 

However, we will need strong 
convergence in the $C^0_t(\dot H^1)$ topology to control the nonlinearity in \eqref{eq:kirchhoff}. To ensure strong 
convergence in a weaker topology we need some compactness. We get compactness through assuming compactly supported initial 
data and relying on finite speed of propagation for \eqref{eq:kirchhoff} with initial data in $H^{3/2}_x \times H^{1/2}_x$. 
We leave the proof of finite speed of propagation for Section \ref{app:finite_speed} of the appendix.
\begin{proposition}
    \label{prop:finite_speed}
    Let $u \in C^0_t([0,T], H^{3/2}_x) \times C^1_t([0,T], H^{1/2}_x)$ 
    solve \eqref{eq:kirchhoff} and suppose 
    that $u(0)$ and $u'(0)$ are supported in a ball of radius $R$ in space.
    Put 
    $$c = 1 + \sup_{0 \leq t \leq T} N(\norm{\nabla u(t)}^2_{L^2}).$$
    Then $u(t)$ is supported in a ball of radius $R + ct$.
\end{proposition}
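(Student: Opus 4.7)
The plan is to adapt the classical cone-based energy method for finite speed of propagation, exploiting the observation that for a fixed solution $u$, the nonlinearity $N(\norm{\nabla u(t)}_{L^2}^2)$ is a function of $t$ alone. Thus the equation reduces to a linear wave equation with spatially constant, time-dependent wave speed $c_t := \sqrt{1+N(t)}$, and the choice $c = 1 + \sup_{t\in[0,T]} N(t)$ uniformly bounds $c_t$ from above, so a backward cone with slope $c$ always dominates the true wave cone.

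Since the $H^{3/2}_x \times H^{1/2}_x$ regularity is not quite enough to take traces of $\nabla u$ on spheres directly, I would first reduce to smooth solutions by approximation. Mollifying $(u(0), u'(0))$ gives smooth approximations $(u_0^\epsilon, u_1^\epsilon)$ supported in balls of radius $R + \epsilon$ and converging in $H^{3/2}_x \times H^{1/2}_x$. By the Hadamard local well-posedness of \cite{arosio1996well}, the corresponding smooth solutions $u^\epsilon$ exist on $[0, T]$ for $\epsilon$ small and converge to $u$ in $C^0_t H^{3/2}_x \cap C^1_t H^{1/2}_x$, so it suffices to prove that each $u^\epsilon(t)$ is supported in a ball of radius $R + \epsilon + ct$ and then send $\epsilon \to 0$.

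For a smooth $u^\epsilon$, fix $t_0 \in [0,T]$ and $x_0 \in \bb R^n$ with $\abs{x_0} > R + ct_0$, and define the local energy on the backward cone
\begin{equation*}
E(t) = \frac12 \int_{\abs{x-x_0} \leq c(t_0 - t)} \abs{u'(t,x)}^2 + (1+N(t)) \abs{\nabla u(t,x)}^2 \, dx.
\end{equation*}
Differentiating in $t$, using $u'' = (1+N)\Delta u$ and integrating by parts in space, the interior cross-terms involving $\nabla u \cdot \nabla u'$ cancel. The boundary contributions on $\{\abs{x-x_0} = c(t_0-t)\}$ combine pointwise into a quadratic form in $(u', \partial_\nu u)$ which is non-positive precisely because $c \geq c_t$ (verified by Cauchy--Schwarz on the flux $c_t^2 u' \partial_\nu u$ against $\tfrac{c}{2}(u')^2 + \tfrac{c c_t^2}{2}\abs{\nabla u}^2$). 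The one remaining interior term $\frac12 N'(t) \int \abs{\nabla u}^2 dx$ is bounded by $C\,E(t)$, where $C$ depends only on $\norm{N'}_{L^\infty}$ on a compact interval and the uniform $H^{3/2}_x \times H^{1/2}_x$ bounds on $u^\epsilon$ over $[0,T]$. Gronwall then gives $E(t) \leq E(0) e^{Ct}$, and since $\abs{x_0} > R + ct_0$ the base $\{\abs{x-x_0} \leq ct_0\}$ lies outside the support of the initial data, so $E(0) = 0$ and hence $E(t) \equiv 0$ on $[0, t_0]$. Combined with $u^\epsilon(0, x_0) = 0$ and $u^\epsilon{}'(t, x_0) = 0$ for $t \in [0, t_0)$, this forces $u^\epsilon(t_0, x_0) = 0$.

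The main obstacle is conceptual: the nonlinearity $N(\norm{\nabla u}_{L^2}^2)$ is nonlocal, so modifying $u$ far from $x_0$ changes the coefficient at $x_0$, and at first glance it is not obvious that a finite-speed argument can succeed at all. The resolution is that once a fixed solution $u$ is in hand, the coefficient $1 + N(t)$ is spatially constant and depends only on $t$, so the nonlocality enters only through scalar, uniform-in-$t$ bounds for $1+N(t)$ and $N'(t)$ that are harmlessly absorbed into the Gronwall constant.
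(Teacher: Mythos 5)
Your proof is correct and rests on the same two pillars as the paper's: the observation that for a fixed solution the coefficient $1+N(t)$ is spatially constant so the equation is locally a linear wave equation with time-dependent speed dominated by $c$, and a Gronwall absorption of the $\partial_t N$ term since that derivative is controlled by the global $\dot H^{3/2}_x \times \dot H^{1/2}_x$ norm. The only substantive difference is geometric: you run the flux computation on a shrinking backward cone $\{|x-x_0| \le c(t_0-t)\}$ for each point $x_0$ outside the expanding ball, whereas the paper works directly with the exterior energy $\int_{B^c_{R+ct}}\bigl(\tfrac12|u'|^2 + \tfrac12(1+N)|\nabla u|^2\bigr)\,dx$ over the complement of the expanding ball; these are dual formulations of the same argument, and the sign of the boundary flux is controlled the same way in both. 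You also explicitly mollify the initial data to justify the trace computations, which the paper instead handles once and for all by the density argument announced in its notation section; either is fine.
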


Now we are in a position to restate and prove Theorem \ref{thm:weak_solutions}.

\begin{manualtheorem}{\ref{thm:weak_solutions}}
    Let $(u_0,u_1) \in H_x^{5/4} \times H_x^{1/4}$ be compactly supported with  
    $$\norm{(u_0, u_1)}_{\dot H_x^1 \times L_x^2} = \delta_0 \ll \delta$$
    where $\delta$ is the same small constant as in Theorem \ref{theorem:enhanced}.
    Denote the size of the initial data in $\dot H_x^{5/4} \times \dot H_x^{1/4}$ by 
    $$\epsilon = \norm{(u_0, u_1)}_{\dot H_x^{5/4} \times \dot H_x^{1/4}}.$$
    Then, for $T \sim 1/\epsilon^4$, as in Theorem \ref{theorem:enhanced}, 
    there exists a weak solution 
    $$u \in C_t^0([0,T], H_x^{5/4})\cap C^{1}_t([0,T], H_x^{1/4})$$
    which solves \eqref{eq:kirchhoff} for times in $[0,T)$ in the sense of distributions:

    For all $\phi \in C^\infty_c([0,T) \times U)$ (which vanish at $t=T$) we have 
    $$\int_{[0,T]\times \bb R^n} u \partial_t^2 \phi - u \Delta \phi  - u N(\norm{\nabla u}^2_{L^2})\Delta \phi \, dt dx
    = \int_{\bb R^n} u_0 \partial_t \phi \, dx - \int_{\bb R^n} u_1 \phi \, dx.$$
\end{manualtheorem}
\begin{proof}
    We piggyback off of the higher regularity local well-posedness result in $H_x^{3/2} 
    \times H_x^{1/2}$
    (say using the methods of  \cite{ifrim2022localwellposednessquasilinearproblems}) through applying 
    Theorem \ref{theorem:enhanced}. 

    Take a sequence of approximate compactly supported initial data 
    $(u^{(k)}_0, u^{(k)}_1) \in H_x^{3/2} \times H_x^{1/2}$ which converges to 
    $(u_0, u_1)$ in $H_x^{5/4} \times H_x^{1/4}.$ Since $(u_0, u_1)$ is size $\delta_0 << \delta$ in 
    $\dot H_x^1 \times L_x^2$ we make take these approximations to be smaller than $\delta_0$ in 
    $\dot H_x^1 \times L_x^2$.

    Then, applying Theorem \ref{theorem:enhanced} we get solutions 
    $$u^{(k)} \in C_t^0([0,T], H_x^{3/2}) \cap C_t^1([0,T], H_x^{1/2})$$
    with uniform estimates
    $$\norm{u^{(k)}}_{L_t^\infty([0,T], H_x^{5/4})} \lesssim T\delta + \epsilon$$
    and
    $$\norm{\partial_t u^{(k)}}_{L_t^\infty([0,T], H_x^{1/4})} \lesssim \delta + \epsilon.$$
    First, these uniform estimates immediately give us a subsequence which converges in the weak-$*$ 
    topology to a function 
    $$u \in L_t^\infty([0,T], H_x^{5/4}) \cap W_t^{1,\infty}([0,T], H_x^{1/4}).$$
    In particular, this means that for all $\phi \in C^\infty_c([0,T] \times \bb R^n)$ we have 
    $$\lim_{k \to \infty} \int_{[0,T] \times \bb R^n} u^{(k)} \phi \, dt dx \to 
    \int_{[0,T] \times \bb R^n} u \phi \, dt dx $$
    Now, let $R$ be large enough to contain the supports of $u^{(k)}(t)$ for all $k$ and 
    $0 \leq t \leq T$, which is guaranteed by Proposition \ref{prop:finite_speed}.
    We have the following
    compact embeddings by the Rellich–Kondrachov theorem
    $$H_x^{1/4}(B_R) \subset\subset H_x^1(B_R) \subset\subset H_x^{5/4}(B_R).$$
    By the Aubin-Lions lemma, we have the compact embedding 
    $$L_t^\infty([0,T], H_x^{5/4}(B_R)) \cap W_t^{1,\infty}([0,T], H_x^{1/4}(B_R)) \subset \subset C_t^0([0,T], H_x^1(B_R)).$$
    This means that there is a further subsequence which converges strongly 
    in the space $C_t^0([0,T], H_x^1(B_R))$, and that this limit must in fact be $u$. Further, 
    since all $u^{(k)}$ are supported in $B_R$ we also have convergence in 
    $C_t^0([0,T], H_x^1(\bb R^n)).$

    Finally, we will show that $u$ is indeed a solution to \eqref{eq:kirchhoff}. Since 
    each $u^{(k)}$ is a solution, it will suffice to show that
    \begin{align*}
    \lim_{k \to \infty} &\int_{[0,T]\times \bb R^n} u^{(k)} \partial_t^2 \phi - u^{(k)} \Delta \phi  - 
    u^{(k)} N(\norm{\nabla u^{(k)}}^2_{L^2})\Delta \phi \, dt dx
    - \int_{\bb R^n} u_0 \partial_t \phi \, dx + \int_{\bb R^n} u_1 \phi \, dx\\
    &= 
    \int_{[0,T]\times \bb R^n} u \partial_t^2 \phi - u \Delta \phi  
    - u N(\norm{\nabla u}^2_{L^2})\Delta \phi \, dt dx
    - \int_{\bb R^n} u_0 \partial_t \phi \, dx + \int_{\bb R^n} u_1 \phi \, dx.
    \end{align*}
    The linear terms follow from weak convergence of $u^{(k)} \rightharpoonup u$. 
    For the nonlinear term we write 
    \begin{align*}
    &\int_{[0,T]\times \bb R^n} 
    (u^{(k)}  N(\norm{\nabla u^{(k)}}^2_{L^2})
    - u  N(\norm{\nabla u}^2_{L^2})\Delta \phi \, dt dx\\
    &= \int_{[0,T]\times \bb R^n} (u^{(k)} 
    (N(\norm{\nabla u^{(k)}}^2_{L^2}) - N(\norm{\nabla u}^2_{L^2})) + (u^{(k)} - u) 
    N(\norm{\nabla u}^2_{L^2}))\Delta \phi \, dt dx
    \end{align*}
    The first term converges to $0$ since $\norm{u^{(k)}}^2_{\dot H_x^1}$ converges to 
    $\norm{u}^2_{\dot H_x^1}$ uniformly in $t$, since $N$ is continuous, and since $u^{(k)}$
    pairs with $\Delta \phi$. The second term 
    converges to $0$ by weak convergence since $N(\norm{\nabla u}^2_{L^2}))\Delta \phi$
    is a fixed function in the dual space of $u^{(k)} - u$.

Thus, we have a solution 
    $$u \in L_t^\infty([0,T], H_x^{5/4})\cap W^{1,\infty}_t([0,T], H_x^{1/4}).$$
To show that $u$ is in fact in the continuous space
    $C_t^0([0,T], H_x^{5/4})\cap C^{1}_t([0,T], H_x^{1/4}),$ we appeal to Theorem \ref{theorem:main}. Currently, 
    the proof of Theorem \ref{theorem:main} is written as if $E^s(t)$ is differentiable, which is immediately justified 
    if $u$ is the limit of strong solutions. At the cost of confusing the notation, however, Theorem \ref{theorem:main} can 
    be proved in the sense of distributions for any $u$ which solves \eqref{eq:kirchhoff} in the sense of distributions. That 
    is, the energy estimate in Theorem \ref{theorem:main} would be restated as follows.

For every $\phi \in C^\infty_c([0, T))$ we have 
$$\abs{\int_{[0,T]} E^s(t) \partial_t \phi(t) dt } \lesssim \abs{\int_{[0,T]} E^s(t)(E^{1/4}(t))^2 \phi(t) dt. }$$

But given such a distributional proof, it is the case that 
$E^{1/4} \in W^{1, \infty}_t([0,T))$. Since $u$ will be continuous in time measured in 
lower spatial regularity norms by the Aubin-Lions lemma, 
the continuity of $E^{1/4}$ implies the continuity in time of the norm
$$\norm{u(t)}_{L^\infty_t H^{5/4}_x \cap W^{1,\infty}_t H^{1/4}_x}.$$
Finally, if $u$ were not continuous at some time $t_0$, then we could recover a limit in 
a lower topology. For instance $u(t_0) = \lim_{t \to t_0} u(t)$
in $C^0_tH^1_x \cap C^1_tL^2_x$. Note that $u(t) - u(t_0) \to 0$ 
in $C^0_tH^1_x \cap C^1_tL^2_x$, but $u(t) - u(t_0) \not \to 0$ in $L^\infty_tH^{5/4}_x 
\cap W^{1,\infty}_tH^{1/4}_x$.
In particular, we would have 
$$\norm{u(t) - u(t_0)}_{L^\infty_tH^{5/4}_x 
\cap W^{1,\infty}_tH^{1/4}_x} \not \to 0,$$
which contradicts the continuity of the norm. Thus, we must in fact have that 
$$u \in C_t^0([0,T], H_x^{5/4})\cap C^{1,\infty}_t([0,T], H_x^{1/4}).$$
\end{proof}

\section{Remarks on the Linearized System} \label{sec:lin}

In this section we will discuss why we cannot use the same modified energy method as above
to infer cubic bounds 
on the linearized system which would be necessary for a local well-posedness result 
at the level of $H_x^{5/4} \times H_x^{1/4}$, at least using the Modified Energy method. 

Fundamentally, a uniqueness result compares solutions. Since comparing solutions is done through estimating differences of 
solutions, we require good estimates on the linearization of 
\eqref{eq:kirchhoff}. We do our analysis in the simplest setting where
$$N(\norm{\nabla u}_{L^2}^2) = \norm{\nabla u}_{L^2}^2.$$
The
linearization around a solution $u$ is 
\begin{equation}
\label{eq:linearized} w'' - \Delta w -  \norm{\nabla u}_{L^2_x}^2\Delta w  - 
2 \Delta u \int \nabla u(y) \nabla w(y)dy = 0.
\end{equation}
In order to get well-posedness in the Hadamard sense, we would like to have control of the
linearized energy for \eqref{eq:linearized}
but in a potentially weaker space $\dot H_x^{\sigma+1} \times \dot H_x^\sigma$ (see \cite{ifrim2022localwellposednessquasilinearproblems} for the full details).
To get a handle on the energy at the level of $\sigma$ we apply $\abs{\nabla}^\sigma$ to the equation 
and measure against $\abs{\nabla}^\sigma \bar w'$. 
Factoring out a time derivative where beneficial we get
\begin{align}
\nonumber    \partial_t&\left[\int \abs{\abs{\nabla}^\sigma w'}^2 + (1+\norm{\nabla u}_{L^2}^2)
\abs{\abs{\nabla}^{1+\sigma}w}^2\right]\\
\nonumber&= 
     \int \nabla u' \nabla u \int\abs{\abs{\nabla}^{1+\sigma}w} 
    - 2 \int \abs{\nabla}^{1+\sigma} w'\abs{\nabla}^{1+\sigma} u \int \nabla u \nabla w\\
\label{eq:sep}&= 
 \int_{\xi_1, \xi_2} \abs{\xi_1}^2\abs{\xi_2}^{2+2\sigma} 
\Re(\hat u'(\xi_1) \overline{\hat u}(\xi_1)) \abs{\hat w(\xi_2)}^2 \vec{d\xi}\\
\label{eq:mixed}&\qquad    - 2 \int_{\xi_1, \xi_2} \abs{\xi_1}^2\abs{\xi_2}^{2+2\sigma} 
\Re(\hat u(\xi_1) \overline{\hat w}(\xi_1))\Re(\hat u(\xi_2) \overline{\hat w}'(\xi_2)) 
\vec{d\xi}.
\end{align}

To hope to prove uniqueness of solutions $u \in C_t^0 (H_x^{5/4}) \cap C_t^1(H_x^{1/4}),$ we would need 
to bound the time derivative of the above energy of $w$ at the level of $\sigma$ in terms of 
itself and $\norm{u}_{C_t^0(H_x^{5/4}) \cap C_t^1(H_x^{1/4})}.$ To get this to happen, the term \eqref{eq:sep} 
requires an energy correction to better split up the derivatives $\abs{\xi_1}^2$. 
Controlling the term \eqref{eq:mixed} without a correction would require 
$$2 \leq \frac54 + 1 + \sigma\qquad \text{ and } \qquad 2+\sigma \leq \frac54 $$
which simplifies to 
$$-\frac14 \leq \sigma \leq -\frac34$$
which is of course impossible. Thus, both terms would require an energy correction to control.

It turns out that \eqref{eq:sep} can be controlled using a modified energy akin to the one 
in Section \ref{sec:simple}.
However, we will see that unfortunately \eqref{eq:mixed} cannot be corrected for any value of $\sigma.$ 
We will give two arguments of this. The first is heuristic and the second will be algebraic. 

\subsection{A Heuristic that (\ref{eq:mixed}) is Resonant}
We use the framework in Subsection \ref{ss:heuristics} and show that we are not guaranteed time 
oscillation in the linearized energy \eqref{eq:mixed}. First note that we cannot hope for cancellation
between the nonoscillatory parts of \eqref{eq:sep} and \eqref{eq:mixed} for the simple fact that
\eqref{eq:sep} is always oscillatory by the argument in Subsection \ref{ss:heuristics}, 
which we will not repeat.

For solutions $u$ to \eqref{eq:kirchhoff} we have the same heuristic decomposition as 
Subsection \ref{ss:heuristics}:
$$\hat u(t) \approx \sum c^+_\lambda e^{i\phi_\lambda} + c^-_\lambda e^{-i\phi_\lambda}$$
where 
$$\phi_\lambda(t) = \lambda t \sqrt{1 + \norm{u_0}^2} + \mathcal O(t^2).$$
(Again, we pretend that $c$ is constant when in reality it varies over frequency regions).

For $w$ we get a similar decomposition by taking the $\lambda$ spatial frequency component of 
\eqref{eq:linearized}.
$$
w_\lambda'' - \lambda^2(1 + \sum_\mu \mu^2 \abs{u_\mu}^2) w_\lambda 
-2 \lambda^2 u_\lambda \sum_\mu \mu^2 \Re(u_\mu \bar w_\mu) = 0.$$
Here the extra term mixes up the $w_\lambda$, but since the $\dot H_x^1$ norm of $u$ we expect to be 
small we can, to leading order, assume that the forcing between the $w_\lambda$ is negligible. 
We thus expect a decomposition like 
$$\hat w(t) \approx \sum d^+_\lambda e^{i\psi_\lambda} + d^-_\lambda e^{-i\psi_\lambda}$$
where heuristically 
$$\psi_\lambda(t) = \lambda t \sqrt{1 + \norm{u_0}^2 - 2(u_0)_\lambda^2} + \mathcal O(t^2).$$
Then we expand the product in \eqref{eq:mixed}.
\begin{align*}
    \int\abs{\xi_1}^{2} &\Re(\hat u(\xi_1) \bar{\hat w}(\xi_1)) d\xi_1
    \approx 
    \sum_{\lambda_1} \lambda_1^{2+2s} \Re(c^+_{\lambda_1} e^{i\phi_{\lambda_1}} 
        + c^-_{\lambda_1} e^{-i\phi_{\lambda_1}})(\bar d^+_{\lambda_1} e^{-i\psi_{\lambda_1}} 
        + \bar d^-_{\lambda_1} e^{i\psi_{\lambda_1}})\\
    &=
    \sum_{\lambda_1} \lambda_1^{2+2s} \Re((c^+_{\lambda_1}\bar d^+_{\lambda_1}  + 
    c^-_{\lambda_1}\bar d^-_{\lambda_1})e^{i(\phi_{\lambda_1} - \psi_{\lambda_1})}
    + (c^+_{\lambda_1}\bar d^-_{\lambda_1} + c^-_{\lambda_1}\bar d^+_{\lambda_1})e^{i(\phi_{\lambda_1} + \psi_{\lambda_1})})
\end{align*}
and 
\begin{align*}
    \int\abs{\xi_2}^{2} \Re(\hat u(\xi_2) &\bar{\hat w}'(\xi_2)) d\xi_2\\
    &\approx 
    \sum_{\lambda_2} \lambda_2^{2} \Re((c^+_{\lambda_2} e^{i\phi_{\lambda_2}} 
        + c^-_{\lambda_2} e^{-i\phi_{\lambda_2}})
        (-i\psi_{\lambda_2}'\bar d^+_{\lambda_2} e^{-i\psi_{\lambda_2}} 
        + i\bar d^-_{\lambda_2}\psi_{\lambda_2}' e^{i\psi_{\lambda_2}}))\\
    &=
    \sum_{\lambda_2} \lambda_2^2 \psi_{\lambda_2}'\Re((-i c^+_{\lambda_2}\bar d^+_{\lambda_2}  + 
    ic^-_{\lambda_2}\bar d^-_{\lambda_2})e^{i(\phi_{\lambda_2} - \psi_{\lambda_2})}
    + (c^+_{\lambda_2}\bar d^-_{\lambda_2} + c^-_{\lambda_2}\bar d^+_{\lambda_2})e^{i(\phi_{\lambda_2} + \psi_{\lambda_2})}).
\end{align*}
Again, we look at the case most likely to cause low frequency terms: $\lambda_1 = \lambda_2 \neq 0$.
Multiplying out we get products of $c$ and $\bar d$ which are not necessarily real and thus have no
hope of canceling in general since $c$ and $d$ are independent. Thus, we expect zero time frequency 
terms and cannot hope for extra cancellation. 

\subsection{Failure of Leading Order Algebraic Cancellation}
Here we show the above heuristic is true in the sense that the same algebraic techniques used in 
Section \ref{sec:simple} fail. 

Again notice that \eqref{eq:sep} and 
\eqref{eq:mixed} are independent. In particular, any term that could cancel with 
\eqref{eq:sep} will separate $u$ and $w$ by frequency ($u$ depends only on $\xi_1$ and 
$w$ depends only on $\xi_2$), but conversely any term that could cancel with \eqref{eq:mixed}
mix $u$ and $w$ in frequency (each frequency has a $u$ and $w$ component).

Thus, we cannot hope for any cancellation between \eqref{eq:sep} and \eqref{eq:mixed}, and must 
deal with them separately. It turns out that \eqref{eq:sep} can be corrected to a third 
order term in energies for $u$ and $w$ using a similar correction in Theorem 
\ref{theorem:main}. Unfortunately, \eqref{eq:mixed} is not amenable to cancellation.

Algebraically, we can show that no correction will fully cancel the 
leading order part of \eqref{eq:mixed}.
We write every possible leading order term that could interact with \eqref{eq:mixed}. Note that 
since we integrate over all $\xi_1, \xi_2$, we need to take into account cancellations 
that take place due to symmetry. To do this, we choose a single representation for each 
term. For instance, every term of the form  
$$c(\xi_1, \xi_2) u'(\xi_1) w(\xi_1) u(\xi_2)w'(\xi_2)$$
is equivalent to 
$$c(\xi_2, \xi_1)u(\xi_1) w'(\xi_1) u'(\xi_2)w(\xi_2)$$
so to reduce the extra degree of freedom we only use the second. Further, in terms 
that are already symmetric, we can symmetrize the coefficients, which we will denote 
by putting a superscript $sym$. Then any first order correction can be written 
\begin{align*}
    E^\sigma_{\xi_1\xi_2} 
&=
a^{sym}_{\xi_1\xi_2}u(\xi_1) w(\xi_1) u(\xi_2)w(\xi_2)\\
&+b^{sym}_{\xi_1\xi_2} u(\xi_1) w'(\xi_1) u(\xi_2)w'(\xi_2)\\
&+c_{\xi_1\xi_2} u(\xi_1) w'(\xi_1) u'(\xi_2)w(\xi_2)\\
&+d_{\xi_1\xi_2} u'(\xi_1) w'(\xi_1) u(\xi_2)w(\xi_2)\\
&+e^{sym}_{\xi_1\xi_2} u'(\xi_1) w(\xi_1) u'(\xi_2)w(\xi_2)\\
&+f^{sym}_{\xi_1\xi_2} u'(\xi_1) w'(\xi_1) u'(\xi_2)w'(\xi_2).
\end{align*}
Asking for the time derivative of this cancels to leading order with \eqref{eq:mixed} boils 
down to the following system of equations
\begin{align}
\label{eq:linear1} 2a^{sym}_{\xi_1\xi_2} - 2b^{sym}_{\xi_1\xi_2}\xi_2^2 
- c_{\xi_1\xi_2}\xi_2^2 - d_{\xi_1\xi_2}\xi_1^2 &= \xi_1^{2+2\sigma} \xi_2^2\\ \label{eq:linear2} 2a^{sym}_{\xi_1\xi_2}  - c_{\xi_2\xi_1}\xi_2^2 
- d_{\xi_1\xi_2}\xi_1^2 - 2e^{sym}_{\xi_1\xi_2}\xi_2^2 &= 0\\
\label{eq:linear3} c_{\xi_1\xi_2} + d_{\xi_1\xi_2} 
+ 2e^{sym}_{\xi_1\xi_2} - 2f^{sym}_{\xi_1\xi_2}\xi_2^2 &= 0\\
\label{eq:linear4} 2b^{sym}_{\xi_1\xi_2} + c_{\xi_2\xi_1} 
+ d_{\xi_1\xi_2} - 2f^{sym}_{\xi_1\xi_2}\xi_2^2 &= 0.
\end{align}
Taking the asymmetric parts of \eqref{eq:linear3} and \eqref{eq:linear4} shows 
$$c_{\xi_1\xi_2} = c_{\xi_2\xi_1},$$
and the symmetric parts of \eqref{eq:linear3} and \eqref{eq:linear4} shows 
$$b^{sym}_{\xi_1\xi_2} = e^{sym}_{\xi_1\xi_2}.$$
These together shows that \eqref{eq:linear1} and \eqref{eq:linear2} are in fact equal, 
so that this can only cancel when 
$$0 =  \xi_1^{2+2\sigma} \xi_2^2$$
which of course is not true for any nonzero frequencies independent of the choice of $\sigma$.

\section{Appendix}\label{sec:appendix}
\subsection{Proof of Proposition \ref{prop:coefficients}}
\label{app:coeff}
\begin{proof}
In the context of this lemma we were working with the model nonlinearity 
$$\hat u''(\xi_1) = - \left(1 + A \int_{\xi_2} \abs{\xi_2}^2 \abs{\hat u(\xi_2)}^2 d\xi_2\right)
\abs{\xi_1}^2 \hat u(\xi_1)$$
We want to show that 
\begin{align*}
    \partial_t \int_{\xi_1, \xi_2} A E^s_{\xi_1\xi_2} \vec{d\xi}&= 
    -\int_{\xi_1, \xi_2} A\abs{\xi_1}^{2+2s}\abs{\xi_2}^2 \abs{\hat u(\xi_1)}^2 
\Re(\hat u(\xi_2) \bar {\hat u}(\xi_2)) \vec{d\xi}\\
&- \int_{\xi_1,\xi_2,\xi_3} \frac12 A^2 (\xi_1^{2s} - \xi_2^{2s})\xi_1^2\xi_2^2\xi_3^2
\abs{\hat u(\xi_1)}^2 \Re(\hat u(\xi_2) \bar {\hat u}'(\xi_2)) \abs{\hat u(\xi_3)}^2\vec{d\xi}
\end{align*}
where $E^s$ is given in Definition \ref{def:coefficients} by
$$E^s_{\xi_1\xi_2} = a_{\xi_1\xi_2} \abs{\hat u(\xi_1)}^2 \abs{\hat u(\xi_2)}^2
+ b_{\xi_1\xi_2} \abs{\hat u(\xi_1)}^2\abs{\hat u'(\xi_2)}^2
+ c_{\xi_1\xi_2} \Re(\hat u(\xi_1) \bar {\hat u}'(\xi_1) \hat u(\xi_2) \bar {\hat u}'(\xi_2)) $$
$$a_{\xi_1\xi_2} = -\frac18 \abs{\xi_1}^2 \abs{\xi_2}^2(\abs{\xi_1}^{2s} + \abs{\xi_2}^{2s})$$
    $$b_{\xi_1\xi_2} = -c_{\xi_1\xi_2} = 
    -\frac14 \abs{\xi_1}^2 \abs{\xi_2}^2 \frac{\abs{\xi_1}^{2s} - \abs{\xi_2}^{2s}}
    {\abs{\xi_1}^2 - \abs{\xi_2}^2}$$
Let us work term by term noting that $a$, $b$, and $c$ are symmetric in $\xi_1$ and $\xi_2$.
\begin{align*}
    \partial_t &a_{\xi_1\xi_2} \abs{\hat u(\xi_1)}^2\abs{\hat u(\xi_2)}^2\\
               &= -\frac14 \abs{\xi_1}^2\abs{\xi_2}^2(\abs{\xi_1}^{2s} + 
    \abs{\xi_2}^{2s})\left(\Re(\hat u(\xi_1) \bar {\hat u}'(\xi_1))\abs{\hat u(\xi_2)}^2
    + \abs{\hat u(\xi_1)}^2\Re(\hat u(\xi_2) \bar {\hat u}'(\xi_2))\right)
\end{align*}
    So that 
\begin{align*}
    \partial_t &\int_{\xi_1, \xi_2} a_{\xi_1\xi_2} \abs{\hat u(\xi_1)}^2\abs{\hat u(\xi_2)}^2
    \vec{d\xi}\\
    &= -\int_{\xi_1, \xi_2} \frac12 \abs{\xi_1}^2\abs{\xi_2}^2(\abs{\xi_1}^{2s} + 
    \abs{\xi_2}^{2s})\abs{\hat u(\xi_1)}^2\Re(\hat u(\xi_2) \bar {\hat u}'(\xi_2)) \vec{d\xi}
\end{align*}
Similarly for $b$ we have 

    \begin{align*}
        \partial_t &b_{\xi_1\xi_2} \abs{\hat u(\xi_1)}^2\abs{\hat u'(\xi_2)}^2 
=-\frac12 \abs{\xi_1}^2 \abs{\xi_2}^2 \frac{\abs{\xi_1}^{2s} - \abs{\xi_2}^{2s}}
{\abs{\xi_1}^2 - \abs{\xi_2}^2}
    \Bigg(\Re(\hat u(\xi_1) \bar {\hat u}'(\xi_1))\abs{\hat u'(\xi_2)}^2\\
&- \abs{\xi_2}^2\abs{\hat u(\xi_1)}^2\Re(\hat u(\xi_2) \bar {\hat u}'(\xi_2))
- \int_{\xi_3} A\abs{\xi_2}^2\abs{\xi_3}^2 \abs{\hat u(\xi_1)}^2\Re(\hat u(\xi_2) \bar {\hat u}'(\xi_2))
    \abs{\hat u(\xi_3)}^2 d\xi_3
\Bigg).
    \end{align*}
For the $c$ term it will be helpful to use the symmetry from the beginning to avoid clutter:
\begin{align*}
    \partial_t &\int_{\xi_1,\xi_2}c_{\xi_1\xi_2} \Re(\hat u(\xi_1) \bar {\hat u}'(\xi_1))
\Re(\hat u(\xi_2) \bar {\hat u}'(\xi_2)) \vec{d\xi}
= \int_{\xi_1,\xi_2}\frac12 \abs{\xi_1}^2 \abs{\xi_2}^2 \frac{\abs{\xi_1}^{2s} - \abs{\xi_2}^{2s}}
{\abs{\xi_1}^2 - \abs{\xi_2}^2}\\
               &\qquad \Bigg(\Re(\hat u(\xi_1) \bar {\hat u}'(\xi_1))\abs{\hat u'(\xi_2)}^2
-\abs{\xi_1}^2\abs{\hat u(\xi_1)}^2\Re(\hat u(\xi_2) \bar {\hat u}'(\xi_2))\\
               &\qquad- \int_{\xi_3} A\abs{\xi_1}^2\abs{\xi_3}^2 
\abs{\hat u(\xi_1)}^2\Re(\hat u(\xi_2) \bar {\hat u}'(\xi_2))\abs{\hat u(\xi_3)}^2d\xi_3\Bigg) 
\vec{d\xi}
\end{align*}
Combining like terms we get 
\begin{align*}
    \partial_t &\int_{\xi_1, \xi_2} E^s_{\xi_1\xi_2}\vec{d\xi} = \\
               &\int_{\xi_1,\xi_2}\frac12 \Bigg(-(\abs{\xi_1}^{2s} + \abs{\xi_2}^{2s}) 
    + \abs{\xi_2}^2 \frac{\abs{\xi_1}^{2s} - \abs{\xi_2}^{2s}}
{\abs{\xi_1}^2 - \abs{\xi_2}^2}
    - \abs{\xi_1}^2 \frac{\abs{\xi_1}^{2s} - \abs{\xi_2}^{2s}}
{\abs{\xi_1}^2 - \abs{\xi_2}^2}
\Bigg)\\
               &\qquad\qquad \abs{\xi_1}^2\abs{\xi_2}^2\abs{\hat u(\xi_1)}^2 \Re(\hat u(\xi_2)\bar {\hat u}'(\xi_2))\vec{d\xi} \\
&+\int_{\xi_1,\xi_2}\Bigg(-\frac12 \frac{\abs{\xi_1}^{2s} - \abs{\xi_2}^{2s}}
{\abs{\xi_1}^2 - \abs{\xi_2}^2}
+\frac12 \frac{\abs{\xi_1}^{2s} - \abs{\xi_2}^{2s}}
{\abs{\xi_1}^2 - \abs{\xi_2}^2}
\Bigg)
\abs{\xi_1}^2\abs{\xi_2}^2\Re(\hat u(\xi_1) \bar {\hat u}'(\xi_1))\abs{\hat u'(\xi_2)}^2\vec{d\xi} \\
&+\int_{\xi_1,\xi_2,\xi_3}\Bigg(\frac12 \abs{\xi_2}^2\frac{\abs{\xi_1}^{2s} - \abs{\xi_2}^{2s}}
{\abs{\xi_1}^2 - \abs{\xi_2}^2}
-\frac12 \abs{\xi_1}^2\frac{\abs{\xi_1}^{2s} - \abs{\xi_2}^{2s}}
{\abs{\xi_1}^2 - \abs{\xi_2}^2}
\Bigg)\\
&\qquad \qquad A\abs{\xi_3}^2\abs{\hat u(\xi_1)}^2
\Re(\hat u(\xi_2) \bar {\hat u}'(\xi_2))\abs{\hat u(\xi_3)}^2\vec{d\xi} 
\end{align*}
After simplifying and multiplying through by $A$ we get the desired equality
\begin{align*}
    \partial_t \int_{\xi_1, \xi_2} A E^s_{\xi_1\xi_2} \vec{d\xi} &= 
    -\int_{\xi_1, \xi_2} A\abs{\xi_1}^{2+2s}\abs{\xi_2}^2 \abs{\hat u(\xi_1)}^2 
\Re(\hat u(\xi_2) \bar {\hat u}(\xi_2))\vec{d\xi} \\
&- \int_{\xi_1,\xi_2,\xi_3} \frac12 A^2 (\xi_1^{2s} - \xi_2^{2s})\xi_1^2\xi_2^2\xi_3^2
\abs{\hat u(\xi_1)}^2 \Re(\hat u(\xi_2) \bar {\hat u}'(\xi_2)) \abs{\hat u(\xi_3)}^2\vec{d\xi} 
\end{align*}
\end{proof}
\subsection{Proof of Lemma \ref{lem:kernel}}
\label{app:kernel}
\begin{proof}
    We start with a proof of Lemma \ref{lem:kernel} when $s \geq 0$.
    
    When $\abs{\xi_1} \leq \abs{\xi_2}$ we factor 
    $$ \frac{\abs{\xi_1}^{2s} - \abs{\xi_2}^{2s}}{\abs{\xi_1}^2 - \abs{\xi_2}^2}
    \leq \frac{\abs{\xi_2}^{2s}}{\abs{\xi_2}^2}
    \frac{\left(\frac{\abs{\xi_1}}{\abs{\xi_2}}\right)^{2s}- 1}
    {\left(\frac{\abs{\xi_1}}{\abs{\xi_2}}\right)^2 - 1}.$$
    Thus, we will be done when we show 
    $$\abs{\frac{1- x^{2s}}{1- x^2}} \leq 1 + s$$
    for $0 \leq x \leq 1$.

    First, note that our function is nonnegative since it is continuous in $x$ and $s$ 
    and is the division of positive numbers
    when $0 \leq  x < 1$, so we don't need to worry about the absolute value.
    
    When $0 \leq s \leq 1$ we have that $x^2 \leq x^{2s}$ since $0 \leq x \leq 1$ and so 
    $$1 - x^{2s} \leq 1 - x^2 \implies \frac{1 - x^{2s}}{1 - x^2} \leq 1$$

    When $s > 1$ we can use differentiability in $s$ in the following way:
    First, when $s$ in an integer we have 
    $$\frac{1- x^{2s}}{1- x^2} = 1 + x^2 + \cdots + x^{2s - 2} \leq s$$
    since there are $s$ terms of at most size $1$. 

    Now fix $0 < x < 1$. For this $x$ we see that the function 
    $$f(s) = \frac{1- x^{2s}}{1- x^2} $$
    is differentiable in $s$ and its derivative is 
    $$f'(s) = -\frac2{1-x^2} \log(x) x^{2s}$$
    which is strictly positive for our fixed $x$. Thus, if there was an $s$ for which 
    our inequality fails we would have 
    $$f(s) > s+1 > \lceil s\rceil \geq f(\lceil s\rceil)$$
    which violates the mean value theorem. The endpoints in $x$ follow from continuity in $x$.

    ~

    It turns out that the $s \leq 0$ case follows from the $s \geq 0$ case. We again set 
    $$x = \frac{\abs{\xi_1}}{\abs{\xi_2}}$$
    so that our desired inequality becomes, for $0 \leq x \leq 1$ and $s \leq 0$
    $$\frac{x^{2s} - 1}{1- x^2} \leq (1 + \abs{s})x^{2s}.$$
    (Note that since $s \leq 0$ the left hand side of the equation is nonnegative.) 
    Dividing by $x^{2s}$ we would like to show 
    $$\frac{1 - x^{-2s}}{1- x^2} \leq (1 + \abs{s}).$$
    which is exactly what we showed above, since $-2s \geq 0$
\end{proof}

\subsection{Proof of Lemma \ref{lem:f}}
\label{app:f}
\begin{proof}
The bound on $\abs{F}$ is clear since the integral is nonnegative and bounded by 
$\norm{u(t)}_{\dot H_x^1}.$
For the time derivative we calculate 
$$\partial_t F = -3A \left(1 + A \int_{\abs{\xi_3} \leq r} \abs{\xi_3}^2 
\abs{\hat u(\xi_3)}^2d\xi_3\right)^{-5/2}\int_{\abs{\xi_3} \leq r} \abs{\xi_3}^2 \Re(\hat u(\xi_3) 
\bar {\hat u}'(\xi_3))d\xi_3$$
which similarly gives the desired bound.

For the identity, it would follow by taking a derivative in $r$ and following the 
discussion preceding Lemma \ref{lem:f} if $F$ was differentiable in $r$, which may not be 
the case if $u$ is rough. The computation is rigorously justified by a dominated smooth 
approximation to $u$.
\end{proof}
\subsection{Proof of Proposition \ref{prop:finite_speed}}
\label{app:finite_speed}
\begin{proof}
We consider the $\dot H^1 \times L^2$ unmodified energy in the complement of the ball of 
radius $R + ct$ which we denote 
$$E(t) = \int_{B^c_{R + ct}} \frac12 \abs{u'}^2 + \frac12(1 + N(\norm{\nabla u}^2_{L^2})) \abs{\nabla u}^2dx.$$
Then we see that 
\begin{align*}
\partial_t E(t) 
&= -c\int_{S_{R + ct}} \frac12 \abs{u'}^2 + \frac12(1 + N(\norm{\nabla u}^2_{L^2})) \abs{\nabla u}^2dx\\
&\qquad+ N'(\norm{\nabla u}^2_{L^2})\int_{B_{R + ct}} \abs{\nabla u}^2 dx\int_{\bb R^n} 
\Re(\nabla u \nabla \bar u') dx \\
&\qquad + \int_{B_{R + ct}} \Re(u''\bar u' + (1 + N(\norm{\nabla u}^2_{L^2})) \nabla u \nabla 
    \bar u') dx \\
&= -\int_{S_{R + ct}} \frac12 c\abs{u'}^2 + (1 + N(\norm{\nabla u})) \Re(\bar u' n \cdot \nabla u) 
+ c\frac12(1 + N(\norm{\nabla u}^2_{L^2})) 
    \abs{\nabla u}^2dx \\
&\qquad+ N'(\norm{\nabla u}^2_{L^2}) \int_{B_{R + ct}} \abs{\nabla u}^2 dx\int_{\bb R^n} 
\Re(\nabla u \nabla \bar u') dx \\
&\leq -\int_{S_{R + ct}} \frac12 c\abs{u'}^2 - c \abs{\bar u' \nabla u} 
+ c\frac12(1 + N(\norm{\nabla u}^2_{L^2})) 
    \abs{\nabla u}^2dx \\
&\qquad+ N'(\norm{\nabla u}^2_{L^2}) E(t) \norm{u(t)}_{\dot H^{3/2} \times \dot H^{1/2}}\\
&\leq -\frac12 c\int_{S_{R + ct}} (\abs{u'} - \abs{\nabla u})^2
+ N'(\norm{\nabla u}^2_{L^2}) E(t) \norm{u(t)}_{\dot H^{3/2} \times \dot H^{1/2}}\\
&\leq  N'(\norm{\nabla u}^2_{L^2}) E(t) \norm{u(t)}^2_{\dot H^{3/2} \times \dot H^{1/2}}
\end{align*}
Thus, by Gronwall's inequality we have 
$$E(t) \leq E(0)\exp({\sup_{0 \leq t \leq T} N'(\norm{\nabla u(t)}^2_{L^2})
\norm{u(t)}^2_{\dot H^{3/2} \times \dot H^{1/2}}})$$
Since $E(0) = 0$ by assumption we have that $E(t) = 0$ for all $0 \leq t \leq T$ which 
in turn forces the support of $u(t)$ to be contained in the ball of radius $R+ct$.
\end{proof}

\bibliographystyle{IEEEtran}
\bibliography{refs}

\begin{thebibliography}{10}
\providecommand{\url}[1]{#1}
\csname url@samestyle\endcsname
\providecommand{\newblock}{\relax}
\providecommand{\bibinfo}[2]{#2}
\providecommand{\BIBentrySTDinterwordspacing}{\spaceskip=0pt\relax}
\providecommand{\BIBentryALTinterwordstretchfactor}{4}
\providecommand{\BIBentryALTinterwordspacing}{\spaceskip=\fontdimen2\font plus
\BIBentryALTinterwordstretchfactor\fontdimen3\font minus
  \fontdimen4\font\relax}
\providecommand{\BIBforeignlanguage}[2]{{%
\expandafter\ifx\csname l@#1\endcsname\relax
\typeout{** WARNING: IEEEtran.bst: No hyphenation pattern has been}%
\typeout{** loaded for the language `#1'. Using the pattern for}%
\typeout{** the default language instead.}%
\else
\language=\csname l@#1\endcsname
\fi
#2}}
\providecommand{\BIBdecl}{\relax}
\BIBdecl

\bibitem{kirchhoff1897vorlesungen}
G.~Kirchhoff, \emph{Vorlesungen {\"u}ber mechanik}.\hskip 1em plus 0.5em minus
  0.4em\relax BG Teubner, 1897, vol.~1.

\bibitem{arosio1991mildly}
A.~Arosio and S.~Garavaldi, ``On the mildly degenerate kirchhoff string,''
  \emph{Mathematical Methods in the Applied Sciences}, vol.~14, no.~3, pp.
  177--195, 1991.

\bibitem{arosio1996well}
A.~Arosio and S.~Panizzi, ``On the well-posedness of the kirchhoff string,''
  \emph{Transactions of the American Mathematical Society}, vol. 348, no.~1,
  pp. 305--330, 1996.

\bibitem{kato1975cauchy}
T.~Kato, ``The cauchy problem for quasi-linear symmetric hyperbolic systems,''
  \emph{Archive for Rational Mechanics and Analysis}, vol.~58, no.~3, pp.
  181--205, 1975.

\bibitem{ifrim2022localwellposednessquasilinearproblems}
\BIBentryALTinterwordspacing
M.~Ifrim and D.~Tataru, ``Local well-posedness for quasilinear problems: a
  primer,'' 2022. [Online]. Available: \url{https://arxiv.org/abs/2008.05684}
\BIBentrySTDinterwordspacing

\bibitem{hadamard1923lectures}
J.~Hadamard, \emph{Lectures on Cauchy's Problem in Linear Partial Differential
  Equations}.\hskip 1em plus 0.5em minus 0.4em\relax Yale University Press,
  1923, vol.~15.

\bibitem{Baldi_2020}
\BIBentryALTinterwordspacing
P.~Baldi and E.~Haus, ``On the existence time for the kirchhoff equation with
  periodic boundary conditions,'' \emph{Nonlinearity}, vol.~33, no.~1, p. 196,
  nov 2019. [Online]. Available:
  \url{https://dx.doi.org/10.1088/1361-6544/ab4c7b}
\BIBentrySTDinterwordspacing

\bibitem{Baldi_longer}
\BIBentryALTinterwordspacing
------, ``Longer lifespan for many solutions of the kirchhoff equation,''
  \emph{SIAM Journal on Mathematical Analysis}, vol.~54, no.~1, pp. 306--342,
  2022. [Online]. Available: \url{https://doi.org/10.1137/20M1351515}
\BIBentrySTDinterwordspacing

\bibitem{Shatah85}
\BIBentryALTinterwordspacing
J.~Shatah, ``Normal forms and quadratic nonlinear klein-gordon equations,''
  \emph{Communications on Pure and Applied Mathematics}, vol.~38, no.~5, pp.
  685--696, 1985. [Online]. Available:
  \url{https://onlinelibrary.wiley.com/doi/abs/10.1002/cpa.3160380516}
\BIBentrySTDinterwordspacing

\bibitem{ono1997global}
K.~Ono, ``Global existence, decay, and blowup of solutions for some mildly
  degenerate nonlinear kirchhoff strings,'' \emph{Journal of differential
  equations}, vol. 137, no.~2, pp. 273--301, 1997.

\bibitem{ONO19974449}
\BIBentryALTinterwordspacing
------, ``Blowing up and global existence of solutions for some degenerate
  nonlinear wave equations with some dissipation,'' \emph{Nonlinear Analysis:
  Theory, Methods \& Applications}, vol.~30, no.~7, pp. 4449--4457, 1997,
  proceedings of the Second World Congress of Nonlinear Analysts. [Online].
  Available:
  \url{https://www.sciencedirect.com/science/article/pii/S0362546X97001831}
\BIBentrySTDinterwordspacing

\bibitem{d1992global}
P.~D'ancona and S.~Spagnolo, ``Global solvability for the degenerate kirchhoff
  equation with real analytic data,'' \emph{Inventiones mathematicae}, vol.
  108, no.~1, pp. 247--262, 1992.

\bibitem{nishihara1984global}
K.~Nishihara, ``On a global solution of some quasilinear hyperbolic equation,''
  \emph{Tokyo journal of mathematics}, vol.~7, no.~2, pp. 437--459, 1984.

\bibitem{HITW13}
J.~Hunter, M.~Ifrim, D.~Tataru, and T.~Wong, ``Long time solutions for a
  burgers-hilbert equation via a modified energy method,'' \emph{Proceedings of
  the American Mathematical Society}, vol. 143, 01 2013.

\bibitem{hunter2014dimensionalwaterwavesholomorphic}
\BIBentryALTinterwordspacing
J.~Hunter, M.~Ifrim, and D.~Tataru, ``Two dimensional water waves in
  holomorphic coordinates,'' 2014. [Online]. Available:
  \url{https://arxiv.org/abs/1401.1252}
\BIBentrySTDinterwordspacing

\bibitem{ifrim2017lifespansmalldatasolutions}
\BIBentryALTinterwordspacing
M.~Ifrim and D.~Tataru, ``The lifespan of small data solutions in two
  dimensional capillary water waves,'' 2017. [Online]. Available:
  \url{https://arxiv.org/abs/1406.5471}
\BIBentrySTDinterwordspacing

\bibitem{ai2023dimensionalgravitywaveslow}
\BIBentryALTinterwordspacing
A.~Ai, M.~Ifrim, and D.~Tataru, ``Two dimensional gravity waves at low
  regularity i: Energy estimates,'' 2023. [Online]. Available:
  \url{https://arxiv.org/abs/1910.05323}
\BIBentrySTDinterwordspacing

\end{thebibliography}
\end{document}